\documentclass[leqno,12pt, final]{amsart}
\setlength{\textheight}{23cm}
\setlength{\textwidth}{16cm}
\setlength{\oddsidemargin}{0cm}
\setlength{\evensidemargin}{0cm}
\setlength{\topmargin}{0cm}
\usepackage{amssymb}
\usepackage{amsmath}
\usepackage{ccaption}
\usepackage{enumerate}
\usepackage{lscape}
\usepackage{multicol}
\usepackage{multirow}
\usepackage{tabularx}
\usepackage[all]{xy}
%

%
%
\theoremstyle{plain}
\newtheorem{theorem}{\indent\sc Theorem}[section]
\newtheorem{lemma}[theorem]{\indent\sc Lemma}
\newtheorem{corollary}[theorem]{\indent\sc Corollary}
\newtheorem{proposition}[theorem]{\indent\sc Proposition}
\newtheorem*{klemma}{\indent\sc Key Lemma}
\newtheorem*{mtheorem}{\indent\sc Theorem}
\theoremstyle{definition}
\newtheorem{definition}[theorem]{\indent\sc Definition}
\newtheorem{remark}[theorem]{\indent\sc Remark}

\newtheorem{recipe}[theorem]{\indent\sc Recipe}
%

%

\newcommand{\BS}[1]{\boldsymbol{#1}}
\newcommand{\MC}[1]{\mathcal{#1}}
\newcommand{\MF}[1]{\mathfrak{#1}}

\newcommand{\OPE}[1]{\operatorname{#1}} 

\newcommand{\RANK}{\operatorname{rank}}
\newcommand{\SRANK}{\operatorname{s\mathchar`-rank}}


\newcommand{\PreserveBackslash}[1]{\let\temp=\\#1\let\\=\temp}
\newcommand{\LISTV}[5]{#1&#2&#3&#4&#5\\\hline}
\newcommand{\LISTIV}[4]{#1&#2&#3&#4\\\hline}
\newlength{\LENGTHH}
\newlength{\LENGTHTHETA}
\setlength{\LENGTHTHETA}{20mm}
\setlength{\LENGTHH}{90mm}

\newcommand{\LB}{\Large{$\bullet$}}


\usepackage{color}
\usepackage{comment}
\usepackage{ifdraft}
\usepackage{showkeys}
\usepackage{ulem}


\begin{document}

\title[Austere orbits for s-representations]{Examples of austere orbits of the isotropy representations for
semisimple pseudo-Riemannian symmetric spaces
}

\author[K. Baba]{Kurando Baba}

\subjclass[2010]{ 
Primary 53C40; Secondary 53C35.}

\keywords{ 
austere submanifold, pseudo-Riemannian symmetric space,
s-representation,
restricted root system.}

\address{
Department of General Education \endgraf
National Institute of Technology, Fukushima College \endgraf
Iwaki, Fukushima 970-8034 \endgraf
Japan
}
\email{baba@fukushima-nct.ac.jp}


\maketitle

\begin{abstract}
Harvey-Lawson and Anciaux introduced
the notion of austere submanifolds in pseudo-Riemannian geometry.
We give an equivalent condition for an orbit
of the isotropy representations for
semisimple pseudo-Riemannian symmetric space
to be an austere submanifold in a pseudo-sphere
in terms of restricted root system theory
with respect to Cartan subspaces.
By using the condition
we give examples of austere orbits.
\end{abstract}

\section*{Introduction} 

In pseudo-Riemannian geometry,
the notion of austere submanifolds
was introduced by Harvey-Lawson (\cite{MR0930601}) and Anciaux (\cite{MR2722116}).
They defined an austere submanifold as
a submanifold such that,
for each normal vector,
 the coefficients of odd degree
for the characteristic polynomial of its shape operator vanish.
In particular,
any austere submanifolds is a submanifold with vanishing mean curvature vector,
which is well-known as the minimal condition in Riemannian geometry.
Recently,
examples of austere submanifolds were given by using 
the method of orbits
on semisimple Riemannian symmetric spaces (\cite{MR2532897}, \cite{MR2752433}, \cite{MR3178477}).
In \cite{MR2532897},
Ikawa-Sakai-Tasaki classified austere orbits (in a sphere)
of the isotropy representation
for a semisimple Riemannian symmetric space
in terms of restricted root system theory.
The aim of this paper is to adapt their method
to a pseudo-Riemannian framework
and to give examples of austere orbits (in a pseudo-sphere)
of the isotropy representation for a semisimple pseudo-Riemannian symmetric spaces.

Let $G/H$
be a semisimple pseudo-Riemannian symmetric space
equipped with the metric induced from the Killing form $B$ of $\MF{g}$ ($:= \OPE{Lie}(G)$).
Let $\sigma$ be an involution of $\MF{g}$ whose fixed point set coincides with $\MF{h}$ ($:= \OPE{Lie}(H)$).
Denote by $\MF{q}$ the $(-1)$-eigenspace of $\sigma$,
which is identified with the tangent space of $G/H$ at the origin.
The isotropy representation of $G/H$
is equivalent to the adjoint representation $\OPE{Ad}$ of $H$ on $\MF{q}$.
Let $M$ be an $\OPE{Ad}(H)$-orbit through $X \in \MF{q}$.
If $X$ is non-null (i.e., $B(X,X) \neq 0$),
then $M$ is contained in the (central) hyperquadrics of $\MF{q}$.
In this paper,
we assume that $M$ is a pseudo-Riemannian submanifold
in the pseudo-hypersphere $\BS{S}$ ($:=\{v \in \MF{q} \mid B(v,v) = r(>0)\}$).
The nondegeneracy of the induced metric on $M \hookrightarrow \BS{S}$ implies the following result.

\begin{klemma}
Assume that the $\OPE{Ad}(H)$-orbit $M$ through $X \in \MF{q}$
is contained in a pseudo-hypersphere $\BS{S}$  $(\subset \MF{q})$.
Then,
$M \hookrightarrow \BS{S}$ is a pseudo-Riemannian submanifold
if and only if $X$ is semisimple $($i.e., an element of $\MF{q}$ such that $\OPE{ad}(X) \in \OPE{End}(\mathfrak{g})$
is diagonalizable over $\BS{C}$$)$.
\end{klemma}

\noindent
A main difficulty in the pseudo-Riemannian case
is the situation
that the shape operator is not diagonalizable over $\BS{C}$.
Therefore we give the Jordan-Chevalley decomposition
of the shape operator of $M \hookrightarrow \BS{S}$ (see, Proposition \ref{prop.jc}).
By using above Key Lemma
we describe the semisimple part and the nilpotent part
of the shape operator
in terms of restricted root system theory with respect to Cartan subspaces (cf.\ \cite{MR567427}, \cite{MR0930601} for the notion of restricted root system theory with respect to Cartan subspaces).
As its application,
we determine the spectrum of the shape operator (see, Corollary \ref{cor.spectrum}).
On the other hand,
in the Riemannian case,
any maximal abelian subspace is Cartan.
This implies that
all Cartan subspaces are mutually $\OPE{Ad}(H)$-conjugate (cf.\ \cite[Lemma 6.3, Chapter V]{MR1834454}).
However,
this conjugacy theorem does not necessarily hold
in the pseudo-Riemannian case.
Therefore we prove a conjugacy theorem
for complexified Cartan subspaces (see, Proposition \ref{prop.conj}).
By using these results we give an equivalent condition for $M \hookrightarrow \BS{S}$
to be austere (see, Proposition \ref{prop.cri}),
which is a generalization of Ikawa-Sakai-Tasaki's method (\cite{MR2532897}).
According to \cite{MR2532897},
the orbit through a restricted root vector
is an austere submanifold in a sphere.
In the pseudo-Riemannian case,
we need a technical condition
for restricted roots (see, Corollary \ref{cor.real}).
The main result of this paper is the following.

\begin{mtheorem}
For any restricted root $\alpha$ in Table \ref{table.realroots},
the $\OPE{Ad}(H)$-orbit through the restricted root vector of $\alpha$
is an austere submanifold in $\BS{S}$.
\end{mtheorem}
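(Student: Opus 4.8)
The plan is to fix a restricted root $\alpha$ from Table~\ref{table.realroots}, let $X$ be the restricted root vector of $\alpha$ (the element of a Cartan subspace $\MF{a}$ with $B(X,\,\cdot\,)|_{\MF{a}} = \alpha$), and run a three-step argument. First I would check that $M := \OPE{Ad}(H)\cdot X$ is a pseudo-Riemannian submanifold of a pseudo-hypersphere. Since $\alpha$ appears in Table~\ref{table.realroots}, the conditions collected in Corollary~\ref{cor.real} guarantee that $X$ is non-null with $B(X,X) > 0$, so taking $r = B(X,X)$ places $X$ in $\BS{S}$; and since $X \in \MF{a}$ it is semisimple, so the Key Lemma shows $M \hookrightarrow \BS{S}$ is a pseudo-Riemannian submanifold. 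This makes the earlier structure theory available: the Jordan--Chevalley decomposition of the shape operator (Proposition~\ref{prop.jc}), the description of its spectrum (Corollary~\ref{cor.spectrum}), and the austerity criterion (Proposition~\ref{prop.cri}).

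Second, I would reduce the austere condition to a combinatorial statement about the restricted root system of $\MF{a}$. By homogeneity it suffices to test the shape operators $A_\xi$ at $X$. Decomposing $\MF{q}$ into restricted root spaces relative to $\MF{a}$, one has $T_X M = \bigoplus_{\beta(X)\neq 0}\MF{q}_\beta$, and the normal space of $M$ in $\BS{S}$ at $X$ is $(\MF{a}\cap X^{\perp})\oplus\bigoplus_{\beta(X)=0}\MF{q}_\beta$, the nondegeneracy of the relevant pieces again being ensured by Corollary~\ref{cor.real}. By Proposition~\ref{prop.jc}, on a tangent root space $\MF{q}_\beta$ the semisimple part of $A_\xi$ acts by the scalar $\beta(\xi)/\beta(X) = \beta(\xi)/\langle\beta,\alpha\rangle$ (up to normalization of root vectors), while the nilpotent part does not affect the characteristic polynomial; hence, in the terms of Corollary~\ref{cor.spectrum}, the spectral data of $A_\xi$ is the multiset $\{\,\beta(\xi)/\langle\beta,\alpha\rangle : \beta(X)\neq 0\,\}$ with multiplicities $\dim\MF{q}_\beta$.

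Third, I would establish the symmetry of this multiset under $\lambda\mapsto-\lambda$ demanded by Proposition~\ref{prop.cri}. For $\xi\in\MF{a}\cap X^{\perp}$ one has $\alpha(\xi)=0$, so the reflection $s_\alpha$ fixes $\xi$, permutes the roots $\beta$ with $\langle\beta,\alpha\rangle\neq 0$ while preserving multiplicities, and reverses the sign of $\langle\beta,\alpha\rangle$; thus the eigenvalue on $\MF{q}_\beta$ and the eigenvalue on $\MF{q}_{s_\alpha\beta}$ are negatives of one another, no tangential root space is $s_\alpha$-fixed, and the characteristic polynomial of $A_\xi$ is even. For $\xi$ in a root space $\MF{q}_\beta$ with $\langle\beta,\alpha\rangle=0$ I would run the analogous pairing inside the sub-root-system orthogonal to $\alpha$, adapting the corresponding lemma of Ikawa--Sakai--Tasaki \cite{MR2532897} and using Proposition~\ref{prop.conj} to make the needed reflections available on the complexified Cartan subspace. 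Together these cases verify the hypothesis of Proposition~\ref{prop.cri}, so $M$ is austere.

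I expect the main obstacle to be the genuinely pseudo-Riemannian features absent from \cite{MR2532897}: the shape operator need not be diagonalizable and its eigenvalues need not be real, so the clean ``$\beta\leftrightarrow-\beta$'' argument only yields what is wanted after austerity has been recast around the Jordan--Chevalley decomposition (Proposition~\ref{prop.jc}, Corollary~\ref{cor.spectrum}, Proposition~\ref{prop.cri}); and since Cartan subspaces need not be $\OPE{Ad}(H)$-conjugate, the meaning of ``restricted root vector of $\alpha$'' has to be controlled by Corollary~\ref{cor.real}, which is exactly what forces $B(X,X)>0$ and the nondegeneracy of the root-space summands used above. Checking that every $\alpha$ in Table~\ref{table.realroots} meets these requirements is then a finite verification over the possible restricted root systems and their multiplicities.
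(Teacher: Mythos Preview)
Your core idea---pairing each tangential root $\beta$ with $s_\alpha(\beta)$ to flip the sign of the shape-operator eigenvalue---is exactly what the paper does, and your first step (realness of $\alpha$ forces $A_\alpha\in\MF{a}$ with $B(A_\alpha,A_\alpha)>0$, hence the Key Lemma applies) is correct. However, you overcomplicate the reduction. The hypothesis of Proposition~\ref{prop.cri} is a single condition on the multiset $\{(-1/\beta(X))p_X(\beta)\mid\beta\in R_+,\ \beta(X)\neq 0\}\subset(\MF{a}\ominus\BS{R}X)^{\BS{C}}$; it does not ask you to test normal vectors $\xi$ case by case. The whole point of Proposition~\ref{prop.conj} and the Jordan--Chevalley argument is that \emph{every} normal direction---including your second case $\xi\in\MF{q}_\beta$ with $\beta(X)=0$---is already absorbed into this one multiset criterion. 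So your second case is redundant, and your sentence ``together these cases verify the hypothesis of Proposition~\ref{prop.cri}'' misreads what that hypothesis actually says.

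The paper's proof (Corollary~\ref{cor.real}) is correspondingly much shorter: with $X=A_\alpha$ one checks in one line that $s_\alpha(\beta)(X)=-\beta(X)$ and $p_X(s_\alpha(\beta))=p_X(\beta)$, so $s_\alpha$ sends $(-1/\beta(X))p_X(\beta)$ to its negative and preserves multiplicities, which is exactly the multiset symmetry demanded by Proposition~\ref{prop.cri}. The main theorem then follows by combining this with Theorem~\ref{thm.realroots}, which identifies Table~\ref{table.realroots} as the list of real restricted roots. Your argument would work once trimmed, but the detour through $\xi\in\MF{q}_\beta$ and the appeal to a ``sub-root-system orthogonal to $\alpha$'' is unnecessary and, as written, vague about what exactly would be proved there.
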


\begin{table}[!!h]
\scriptsize
\renewcommand{\arraystretch}{1.2}
\caption{The real restricted roots of $R$ with respect to a maximally split Cartan subspace}\label{table.realroots}
\centering
\newcolumntype{C}{>{\centering\arraybackslash}X}
\begin{tabularx}{\textwidth}{|c|C|}
\hline
Type of $(R, \theta)$ & Real Restricted Roots \\
\hline
\hline
AI                  & all restricted roots \\\hline
AIII                & $\{\pm(\alpha_{i}+\cdots+\alpha_{r+1-i}) \mid 1\leq i \leq l\}$  \\\hline
\multirow{2}{*}{BI} & $\{\pm(\alpha_{i}+\cdots+\alpha_{j-1}) \mid 1\leq i < j \leq l\}
\cup\{\pm(\alpha_{i}+\cdots+\alpha_{r}+\alpha_{j}+\cdots+\alpha_{r}) \mid 1\leq i < j \leq l\}$ \\
& $\cup\{\pm(\alpha_{i}+\cdots+\alpha_{r}) \mid 1\leq i \leq l\}$ \\\hline
\multirow{2}{*}{BCI} & $\{\pm(\alpha_{i}+\cdots+\alpha_{j-1}) \mid 1\leq i < j \leq l\}\cup\{\pm(\alpha_{i}+\cdots+\alpha_{r}+\alpha_{j}+\cdots+\alpha_{r}) \mid 1\leq i < j \leq l\}$ \\
& $\cup\{\pm(\alpha_{i}+\cdots+\alpha_{r}) \mid 1\leq i \leq l\}\cup\{\pm2(\alpha_{i}+\cdots+\alpha_{r}) \mid 1\leq i \leq l\}$ \\\hline
BCIII    & $\{\pm(\alpha_{2i-1}+2\alpha_{2i}+\cdots+2\alpha_{r}) \mid 1\leq i \leq l\}$ \\\hline
\multirow{2}{*}{CI} & $\{\pm(\alpha_{i}+\cdots+\alpha_{j-1}) \mid 1\leq i < j \leq l\}\cup\{\pm(\alpha_{i}+\cdots+\alpha_{j-1}+\alpha_{j}+\cdots+\alpha_{r}) \mid 1\leq i < j \leq l\}$\\
&$\cup\{\pm(2\alpha_{i}+\cdots+2\alpha_{r-1}+\alpha_{r}) \mid 1\leq i \leq l\}$ \\\hline
CIII & $\{\pm(\alpha_{2i-1}+2\alpha_{2i}+\cdots+2\alpha_{r-1}+\alpha_{r}) \mid 1 \leq i \leq l\}$ \\\hline
DI & $\{\pm(\alpha_{i}+\cdots+\alpha_{j-1}) \mid 1\leq i<j \leq l\}\cup \{\pm(\alpha_{i}+\cdots+\alpha_{r-2}+\alpha_{j}+\cdots+\alpha_{r}) \mid 1 \leq i < j \leq l\}$ \\\hline
DIII & $\{\pm(\alpha_{2i-1}+\cdots+\alpha_{r-2}+\alpha_{2i}+\cdots+\alpha_{r}) \mid 1\leq i \leq l\}$ \\\hline
\end{tabularx}
\end{table}

\begin{table}[!!h]
\scriptsize
\renewcommand{\arraystretch}{1.2}
\contcaption{(continued)}
\centering
\newcolumntype{C}{>{\centering\arraybackslash}X}
\begin{tabularx}{\textwidth}{|c|C|}
\hline
Type of $(R, \theta)$ & Real Restricted Roots \\
\hline
\hline
EI    & all restricted roots  \\\hline
\multirow{4}{*}{EII}   & 
$\left\{\pm\alpha_{2}, \pm\alpha_{4}, \pm(\alpha_{3}+\alpha_{4}+\alpha_{5}),
\pm(\alpha_{2}+\alpha_{4}),
\pm(\alpha_{2}+\alpha_{3}+\alpha_{4}+\alpha_{5})\right\}
\cup\left\{\pm(\alpha_{2}+\alpha_{3}+2\alpha_{4}+\alpha_{5}),
\pm(\alpha_{1}+\alpha_{3}+\alpha_{4}+\alpha_{5}+\alpha_{6}),
\pm(\alpha_{1}+\alpha_{2}+\alpha_{3}+\alpha_{4}+\alpha_{5}+\alpha_{6})\right\}
\cup\left\{\pm(\alpha_{1}+\alpha_{2}+\alpha_{3}+2\alpha_{4}+\alpha_{5}+\alpha_{6}),
\pm(\alpha_{1}+\alpha_{2}+2\alpha_{3}+2\alpha_{4}+2\alpha_{5}+\alpha_{6})\right\}
\cup\left\{\pm(\alpha_{1}+\alpha_{2}+2\alpha_{3}+3\alpha_{4}+2\alpha_{5}+\alpha_{6}),
\pm(\alpha_{1}+2\alpha_{2}+2\alpha_{3}+3\alpha_{4}+2\alpha_{5}+\alpha_{6})\right\}$ \\\hline
EIII  & 
$\{
\pm(\alpha_{1}+\alpha_{3}+\alpha_{4}+\alpha_{5}+\alpha_{6})
\pm(\alpha_{1}+2\alpha_{2}+2\alpha_{3}+3\alpha_{4}+2\alpha_{5}+\alpha_{6})
\}$ \\\hline
EV    & all restricted roots  \\\hline
\multirow{4}{*}{EVI}  & $\{
\pm \alpha_{1},
\pm \alpha_{3},
\pm(\alpha_{1}+\alpha_{3}),
\pm(\alpha_{2}+\alpha_{3}+2\alpha_{4}+\alpha_{5}),
\pm(\alpha_{1}+\alpha_{2}+\alpha_{3}+2\alpha_{4}+\alpha_{5})
\}
\cup\{
\pm(\alpha_{1}+\alpha_{2}+2\alpha_{3}+2\alpha_{4}+\alpha_{5}),
\pm(\alpha_{1}+\alpha_{2}+2\alpha_{3}+2\alpha_{4}+2\alpha_{5}+2\alpha_{6}+\alpha_{7})
\}
\cup\{
\pm(\alpha_{1}+\alpha_{2}+\alpha_{3}+2\alpha_{4}+2\alpha_{5}+2\alpha_{6}+\alpha_{7}),
\pm(2\alpha_{1}+2\alpha_{2}+3\alpha_{3}+4\alpha_{4}+3\alpha_{5}+2\alpha_{6}+\alpha_{7})
\}
\cup\{
\pm(\alpha_{1}+2\alpha_{2}+2\alpha_{3}+4\alpha_{4}+3\alpha_{5}+2\alpha_{6}+\alpha_{7}),
\pm(\alpha_{1}+2\alpha_{2}+3\alpha_{3}+4\alpha_{4}+3\alpha_{5}+2\alpha_{6}+\alpha_{7})
\}
\cup\{
\pm(\alpha_{2}+\alpha_{3}+2\alpha_{4}+2\alpha_{5}+2\alpha_{6}+\alpha_{7})
\}$\\\hline
EVII  & $\{
\pm \alpha_{7},
\pm(\alpha_{2}+\alpha_{3}+2\alpha_{4}+2\alpha_{5}+2\alpha_{6}+\alpha_{7}),
\pm(2\alpha_{1}+2\alpha_{2}+3\alpha_{3}+4\alpha_{4}+3\alpha_{5}+2\alpha_{6}+\alpha_{7})
\}$   \\\hline
EVIII & all restricted roots  \\\hline
\multirow{5}{*}{EIX}   &  
$\{
\pm \alpha_{7},
\pm \alpha_{8},
\pm(\alpha_{7}+\alpha_{8}),
\pm(\alpha_{2}+\alpha_{3}+2\alpha_{4}+2\alpha_{5}+2\alpha_{6}+\alpha_{7})
\}$\\
&$\cup\{
\pm(\alpha_{2}+\alpha_{3}+2\alpha_{4}+2\alpha_{5}+2\alpha_{6}+\alpha_{7}+\alpha_{8}),
\pm(\alpha_{2}+\alpha_{3}+2\alpha_{4}+2\alpha_{5}+2\alpha_{6}+2\alpha_{7}+\alpha_{8})
\}$\\
&$\cup\{
\pm(2\alpha_{1}+2\alpha_{2}+3\alpha_{3}+4\alpha_{4}+3\alpha_{5}+2\alpha_{6}+\alpha_{7})\}\cup\{\pm(2\alpha_{1}+2\alpha_{2}+3\alpha_{3}+4\alpha_{4}+3\alpha_{5}+2\alpha_{6}+2\alpha_{7}+\alpha_{8})
\}\cup\{
\pm(2\alpha_{1}+2\alpha_{2}+3\alpha_{3}+4\alpha_{4}+3\alpha_{5}+2\alpha_{6}+\alpha_{7}+\alpha_{8})\}\cup\{\pm(2\alpha_{1}+3\alpha_{2}+4\alpha_{3}+6\alpha_{4}+5\alpha_{5}+4\alpha_{6}+2\alpha_{7}+\alpha_{8})
\}\cup\{
\pm(2\alpha_{1}+3\alpha_{2}+4\alpha_{3}+6\alpha_{4}+5\alpha_{5}+4\alpha_{6}+3\alpha_{7}+\alpha_{8})\}\cup\{\pm(2\alpha_{1}+3\alpha_{2}+4\alpha_{3}+6\alpha_{4}+5\alpha_{5}+4\alpha_{6}+3\alpha_{7}+2\alpha_{8})
\}$\\\hline
FI    & all restricted roots   \\\hline
FII   & $\{
\pm(\alpha_{1}+2\alpha_{2}+3\alpha_{3}+2\alpha_{4})
\}$  \\\hline
FIII  & $\{
\pm(\alpha_{1}+\alpha_{2}+\alpha_{3}),
\pm(\alpha_{2}+2\alpha_{3}+2\alpha_{4}),
\pm(\alpha_{1}+2\alpha_{2}+3\alpha_{3}+2\alpha_{4}),
\pm(2\alpha_{1}+3\alpha_{2}+4\alpha_{3}+2\alpha_{4})
\}$ \\\hline
G     & all restricted roots   \\\hline
\end{tabularx}
\end{table}

\noindent
Here we remark on Theorem.
Let $\MF{a}$ be a Cartan subspace of $\MF{q}$
and $R$ ($\subset (\MF{a}^{\BS{C}})^{*} \setminus \{0\}$)  denote the restricted root system with respect to $\MF{a}$.
In the pseudo-Riemannian case,
a restricted root vector is in $\MF{a}$
if and only if its restricted root takes real values on $\MF{a}$.
In Theorem \ref{thm.realroots},
we classify all the real restricted roots
when $\MF{a}$ is maximally split
and the list is as in Table \ref{table.realroots}
(see, Section \ref{sec.pre} for
the definition of a maximally split Cartan subspace).
For the determination of the real roots,
we use a Satake diagram of $G/H$
associated with $(R, \theta)$,
where $\theta$ is a Cartan involution
of $\MF{g}$ such that $\theta$ commutes with $\sigma$
and preserves $\MF{a}$ invariantly
(cf.\ \cite{MR567427} for the existence of such a Cartan involution).
In Table \ref{table.realroots},
the types of $(R, \theta)$ are as in Table \ref{table.satakelistclassical},
the $\alpha_{i}$'s are fundamental roots as in Table \ref{table.satakelistclassical},
and $r$ (resp.\ $l$) denotes
the rank (resp.\ the split rank) of $G/H$.
In Table \ref{table.satakeclassical},
we determine 
the rank, the split rank and the type of $(R, \theta)$
for each irreducible pseudo-Riemannian symmetric space,
which was classified by Berger (\cite{MR0104763}).

The organization of this paper is as follows.
In Section \ref{sec.pre},
we prove Key Lemma,
give preliminaries for restricted root system theory
with respect to Cartan subspaces,
and recall the notion of its Satake diagram.
In Section \ref{sec.jcshape},
we give the Jordan-Chevalley decomposition for the shape operator
of an $\OPE{Ad}(H)$-orbit.
Moreover,
we determine the spectrum of the shape operator.
In Section \ref{sec.austere},
we prove Corollary \ref{cor.real} and Theorem \ref{thm.realroots},
which give the proof of Theorem.
In Appendix \ref{sec.appendix},
we give a recipe to determine
the Satake diagrams
associated with the restricted root systems
with respect to maximally split Cartan subspaces
for all semisimple (irreducible) pseudo-Riemannian symmetric spaces.

\indent\textsc{Future directions}.
We will classify all the austere orbits (in a pseudo-sphere)
of the isotropy representation
for a semisimple pseudo-Riemannian symmetric space.
For this purpose,
we need to determine the orbit space.
However,
the orbit space for general orbits
becomes quite complicated
in the pseudo-Riemannian case. 
We except that any austere
orbit is a hyperbolic orbit.
In \cite{B3},
the orbit space for hyperbolic orbits
is described in terms of restricted root system theory
with respect to maximal split abelian subspaces
(cf.\ \cite{MR518716}, \cite{MR810638} for the definition of a maximal split abelian subspace).

\indent\textsc{Acknowledgments}.
The author would like to express his sincere gratitude to
Professor Naoyuki Koike for valuable discussions and valuable comments.

\section{Preliminaries}\label{sec.pre}

Let $G$ be a connected semisimple noncompact Lie group,
$\sigma$ be an involution of $G$.
Let $H$ be a closed subgroup of $G$
with $(G_{\sigma})_{0} \subset H \subset G_{\sigma}$,
where $G_{\sigma}$ denotes the fixed point group of $\sigma$
and $(G_{\sigma})_{0}$ denotes its identity component.
The pair $(G, H)$ is called a \textit{semisimple symmetric pair}.
Then the coset space $G/H$ equipped with the metric induced from
the Killing form $B$ of $\MF{g}$ ($:=\OPE{Lie}(G)$)
is a semisimple pseudo-Riemannian symmetric space.
The involution $\sigma$ of $G$ induces an involution of $\MF{g}$,
which is also denoted by the same symbol $\sigma$.
Then the Lie algebra $\MF{h}$  of $H$ coincides with
$\{ X \in \MF{g} \mid \sigma(X)=X \}$.
The pair $(\MF{g}, \MF{h})$
is called a \textit{semisimple symmetric pair}.
Set $\MF{q}=\{ X \in \MF{g} \mid \sigma(X)=-X \}$,
which is identified with the tangent space of $G/H$
at the origin.
It is useful to identify
the isotropy representation of $G/H$
with the adjoint representation $\OPE{Ad}$
of $H$ on $\MF{q}$
in the context of symmetric spaces.
For each $X \in \MF{g}$,
the Jordan-Chevalley (JC) decomposition of $X$ is
induced from that of $\OPE{ad}(X) \in \OPE{End}(\MF{g})$
(cf. \cite[Proposition 1.3.5.1]{MR0498999}),
where $\OPE{ad}:\MF{g} \to \OPE{End}(\MF{g})$ is the adjoint representation of $\MF{g}$.
Denote by $X_{s}$ (resp.\ $X_{n}$)
the semisimple part (resp.\ the nilpotent part) of $X$.
By using Proposition 2 in \cite{MR567427}
we have $X_{s}, X_{n} \in \MF{q}$
if $X$ is in $\MF{q}$.
An element $X \in \MF{q}$ is said to be \textit{semisimple} (resp.\ \textit{nilpotent})
if $X=X_{s}$ (resp.\ $X=X_{n}$) holds.
Here,
we prove Key Lemma stated in Introduction.
\begin{proof}[\textsc{Proof of Key Lemma}]
Suppose that $M \hookrightarrow \BS{S}$ ($:=\{v \in \MF{q} \mid B(v,v)=r (>0)\}$)
 is a pseudo-Riemannian submanifold.
Let $X=X_{s}+X_{n}$ be the JC decomposition of $X$.
Since, for any $\xi \in T^{\perp}_{X}M$, $[\xi, X]=0$ holds,
we have $[\xi, X_{s}]=[\xi, X_{n}]=0$ by using Proposition 1.3.5.1 in
\cite{MR0498999}.
From Lemma 12 in \cite{MR567427}
there exists a $Z \in \MF{h}$ such that $[Z, X_{n}]=X_{n}$.
This implies that $X_{n}$ is orthogonal to $X$
by calculating
$B(X_{n},X) = B([Z,X_{n}],X)=B([X_{n}, X],Z)=0$.
Hence we have $X_{n} \in T^{\perp}_{X}M$.
The nondegeneracy of $M \hookrightarrow\BS{S}$
implies that the restriction of $B$ on $T^{\perp}_{X}M$ is nondegenerate.
By using the calculation $B(\xi,X_{n})= B(\xi,[Z, X_{n}])=B(Z,[X_{n},\xi])=0$ for all $\xi \in T^{\perp}_{X}M$,
we have $X_{n} = 0$.
Hence $X=X_{s}$ holds.
Conversely, let $X$ be a semisimple element in $\MF{q}$.
Then,
we have the eigenspace decomposition
$\MF{g}^{\BS{C}} = \sum_{\alpha \in \OPE{Spec}\OPE{ad}(X)}
\OPE{Ker}(\OPE{ad}(X)-\alpha \OPE{id})$ of
$\OPE{ad}(X) (\in \OPE{End}(\MF{g}^{\BS{C}}))$,
where $\OPE{Spec}\OPE{ad}(X)$ ($\subset \BS{C}$) denotes the spectrum
of $\OPE{ad}(X)$
and $\OPE{id}$ denotes the identity transformation on $\MF{g}^{\BS{C}}$.
Since $\sigma(\OPE{Ker}(\OPE{ad}(X) - \alpha \OPE{id}))
= \OPE{Ker}(\OPE{ad}(X) + \alpha \OPE{id})$,
we have a decomposition of $\MF{q}^{\BS{C}}$ as follows:
\begin{equation}\label{eq.adx}
\MF{q}^{\BS{C}} = \OPE{Ker}\OPE{ad}(X) \cap \MF{q}^{\BS{C}}
+ \sum_{\alpha \in \OPE{Spec}\OPE{ad}(X)\setminus\{0\}}
(\OPE{Ker}(\OPE{ad}(X)-\alpha\OPE{id})+\OPE{Ker}(\OPE{ad}(X)+\alpha\OPE{id}))
\cap\MF{q}^{\BS{C}}.
\end{equation}
Then we have $(T_{X}M)^{\BS{C}} = \sum_{\alpha \in \OPE{Spec}\OPE{ad}(X)\setminus\{0\}}
(\OPE{Ker}(\OPE{ad}(X)-\alpha\OPE{id})+\OPE{Ker}(\OPE{ad}(X)+\alpha\OPE{id}))
\cap\MF{q}^{\BS{C}}$,
where $(T_{X}M)^{\BS{C}}$ denotes the complexification of the tangent space of $M$ at $X$.
Since the decomposition (\ref{eq.adx})
is orthogonal with respect to $B$,
the restriction of $B$ on $(T_{X}M)^{\BS{C}}$
is nondegenerate.
Hence $M \hookrightarrow\BS{S}$ is a pseudo-Riemannian submanifold.
\end{proof}

\noindent
Note that,
any semisimple element in $\MF{q}$ is contained in a Cartan subspace of $\MF{q}$
(i.e., a maximal abelian subspace of $\MF{q}$ which consists of semisimple elements).
In the sequel,
we recall the notion of restricted root system theory
with respect to Cartan subspaces (cf.\ \cite{MR567427}, \cite{MR0930601}).
Let $\MF{a}$ be a Cartan subspace of $\MF{q}$.
Set,
for any $\alpha \in (\MF{a}^{\BS{C}})^{*}$,
\begin{align*}
\MF{g}^{\BS{C}}_{\alpha} &= \left\{X \in \MF{g}^{\BS{C}} \mid \OPE{ad}(A)X=\alpha(A)X, \forall A \in \MF{a}^{\BS{C}}\right\},\\
\MF{h}^{\BS{C}}_{\alpha} &= \left\{Z \in \MF{h}^{\BS{C}} \mid \OPE{ad}(A)^{2}Z=\alpha(A)^{2}Z, \forall A \in \MF{a}^{\BS{C}}\right\},\\
\MF{q}^{\BS{C}}_{\alpha} &=\left\{Y \in \MF{h}^{\BS{C}} \mid \OPE{ad}(A)^{2}Y=\alpha(A)^{2}Y, \forall A \in \MF{a}^{\BS{C}}\right\}.
\end{align*}
Denote by $R = \{ \alpha \in (\MF{a}^{\BS{C}})^{*} \setminus \{0\} \mid \MF{q}^{\BS{C}}_{\alpha} \neq \{0\}\}$,
which is called the \textit{restricted root system}
of $G/H$ (or $(\MF{g}, \MF{h})$) with respect to $\MF{a}$.
Then $R$ becomes a (reduced) root system.
For each $\alpha \in R$,
the dimension of $\MF{q}^{\BS{C}}_{\alpha}$ is called the \textit{multiplicity} of $\alpha$.
The dimension of $\MF{a}$
is called the \textit{rank} of $G/H$ (or $(\MF{g}, \MF{h})$).
Note that the type of $R$ (as root system)
and the value of $\OPE{rank}(G/H)$
do not depend on the choice of a Cartan subspace of $\MF{q}$.

\begin{lemma}[{\cite[2.1 Proposition]{MR0930601}}]
Assume that the $\OPE{Ad}(H)$-orbit $M$ through $X \in \MF{a}$ is contained in $\BS{S}$.
Then we have orthogonal decompositions of
$(T_{X}M)^{\BS{C}}$ and
the complexification of the normal space of $M$ in $\BS{S}$ as follows:
\begin{align*}
(T_{X}M)^{\BS{C}} &= \sum_{\alpha \in R_{+}:\ \alpha(X)\neq 0}\MF{q}^{\BS{C}}_{\alpha},\\
(T^{\perp}_{X}M)^{\BS{C}} &= (\MF{a}\ominus\BS{R}X)^{\BS{C}} + \sum_{\alpha \in R_{+}:\ \alpha(X)=0}\MF{q}^{\BS{C}}_{\alpha},
\end{align*}
where $R_{+}$ is a positive root system of $R$.
Moreover,
the above decompositions are orthogonal with respect to the Killing form of $\MF{g}^{\BS{C}}$.
\end{lemma}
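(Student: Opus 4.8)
\smallskip
\noindent\textbf{Proof proposal.}
The plan is to realise the tangent space of the orbit as the image of $\OPE{ad}(X)$ restricted to $\MF{h}$, and then to decompose everything under the adjoint action of $\MF{a}$. First, the orbit map $H \ni h \mapsto \OPE{Ad}(h)X \in \MF{q}$ has differential $Z \mapsto [Z,X]$ at the identity, so $T_{X}M = [\MF{h},X] = \OPE{ad}(X)\MF{h}$ and hence $(T_{X}M)^{\BS{C}} = \OPE{ad}(X)(\MF{h}^{\BS{C}})$. Since $\MF{a}$ is a Cartan subspace, the endomorphisms $\OPE{ad}(A)$, $A \in \MF{a}^{\BS{C}}$, are commuting semisimple operators, hence simultaneously diagonalisable, giving $\MF{g}^{\BS{C}} = \MF{g}^{\BS{C}}_{0} \oplus \bigoplus_{\alpha \in R}\MF{g}^{\BS{C}}_{\alpha}$. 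Because $\sigma$ anticommutes with $\OPE{ad}(A)$ for $A \in \MF{a}$, we have $\sigma(\MF{g}^{\BS{C}}_{\alpha}) = \MF{g}^{\BS{C}}_{-\alpha}$, so $\MF{h}^{\BS{C}}_{\alpha} = \{Y + \sigma Y \mid Y \in \MF{g}^{\BS{C}}_{\alpha}\}$ and $\MF{q}^{\BS{C}}_{\alpha} = \{Y - \sigma Y \mid Y \in \MF{g}^{\BS{C}}_{\alpha}\}$ are the $(\pm 1)$-eigenspaces of $\sigma$ on $\MF{g}^{\BS{C}}_{\alpha} \oplus \MF{g}^{\BS{C}}_{-\alpha}$; in particular $\dim\MF{h}^{\BS{C}}_{\alpha} = \dim\MF{g}^{\BS{C}}_{\alpha} = \dim\MF{q}^{\BS{C}}_{\alpha}$, and we obtain $\MF{h}^{\BS{C}} = (\MF{h}^{\BS{C}} \cap \MF{g}^{\BS{C}}_{0}) \oplus \bigoplus_{\alpha \in R_{+}}\MF{h}^{\BS{C}}_{\alpha}$ together with $\MF{q}^{\BS{C}} = \MF{a}^{\BS{C}} \oplus \bigoplus_{\alpha \in R_{+}}\MF{q}^{\BS{C}}_{\alpha}$, using that $\MF{q}^{\BS{C}} \cap \MF{g}^{\BS{C}}_{0} = \MF{a}^{\BS{C}}$ for a Cartan subspace (cf.\ \cite{MR567427}).

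Next I would apply $\OPE{ad}(X)$ to these summands. For $Y \in \MF{g}^{\BS{C}}_{\alpha}$ one has $\OPE{ad}(X)(Y + \sigma Y) = \alpha(X)(Y - \sigma Y)$, so $\OPE{ad}(X)$ annihilates $\MF{h}^{\BS{C}} \cap \MF{g}^{\BS{C}}_{0}$, carries $\MF{h}^{\BS{C}}_{\alpha}$ into $\MF{q}^{\BS{C}}_{\alpha}$, and, when $\alpha(X) \neq 0$, this last map is injective (its kernel lies in $\MF{g}^{\BS{C}}_{\alpha} \cap \MF{g}^{\BS{C}}_{-\alpha} = \{0\}$) hence bijective by the dimension equality above, while it vanishes when $\alpha(X) = 0$. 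Summing over $R_{+}$ then gives the first decomposition, $(T_{X}M)^{\BS{C}} = \sum_{\alpha \in R_{+},\ \alpha(X) \neq 0}\MF{q}^{\BS{C}}_{\alpha}$.

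Finally, for the normal space I would use that $\OPE{ad}(A)$ is skew-symmetric for the Killing form $B$, so $\MF{g}^{\BS{C}}_{\lambda}$ and $\MF{g}^{\BS{C}}_{\mu}$ are $B$-orthogonal unless $\lambda + \mu = 0$; this makes the decomposition $\MF{q}^{\BS{C}} = \MF{a}^{\BS{C}} \oplus \bigoplus_{\alpha \in R_{+}}\MF{q}^{\BS{C}}_{\alpha}$ orthogonal, and since $B$ is nondegenerate on $\MF{q}^{\BS{C}}$ (because $\MF{g} = \MF{h} \oplus \MF{q}$ is a $B$-orthogonal decomposition of the semisimple $\MF{g}$) it restricts nondegenerately to each summand. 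As $X \in \MF{a}$ is non-null and $T_{X}\BS{S} = X^{\perp} \cap \MF{q}$, the normal space $T^{\perp}_{X}M$ of $M$ in $\BS{S}$ is the $B$-orthogonal complement of $(T_{X}M)^{\BS{C}} + \BS{C}X$ inside $\MF{q}^{\BS{C}}$, that is, $(\MF{a}^{\BS{C}} \ominus \BS{C}X) + \sum_{\alpha \in R_{+},\ \alpha(X) = 0}\MF{q}^{\BS{C}}_{\alpha} = (\MF{a} \ominus \BS{R}X)^{\BS{C}} + \sum_{\alpha \in R_{+},\ \alpha(X) = 0}\MF{q}^{\BS{C}}_{\alpha}$, and the asserted $B$-orthogonality of all the pieces is exactly the orthogonality of the root space decomposition just used. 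I expect the only genuine point to be the $\sigma$-eigenspace bookkeeping yielding $\dim\MF{h}^{\BS{C}}_{\alpha} = \dim\MF{q}^{\BS{C}}_{\alpha}$, hence the surjectivity of $\OPE{ad}(X)\colon \MF{h}^{\BS{C}}_{\alpha} \to \MF{q}^{\BS{C}}_{\alpha}$ for $\alpha(X) \neq 0$; the rest is standard orbit geometry together with invariance of $B$. (Alternatively, one may simply invoke \cite[2.1 Proposition]{MR0930601}.)
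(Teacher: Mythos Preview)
Your proof is correct and follows the standard argument: identify $(T_{X}M)^{\BS{C}}$ with $\OPE{ad}(X)(\MF{h}^{\BS{C}})$, decompose $\MF{h}^{\BS{C}}$ and $\MF{q}^{\BS{C}}$ via the simultaneous eigenspace decomposition for $\OPE{ad}(\MF{a})$, and use the $\sigma$-eigenspace bookkeeping to see that $\OPE{ad}(X)\colon \MF{h}^{\BS{C}}_{\alpha}\to\MF{q}^{\BS{C}}_{\alpha}$ is a bijection precisely when $\alpha(X)\neq 0$. The orthogonality and the normal-space description then follow from the $B$-orthogonality of distinct weight spaces, exactly as you wrote.

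There is nothing to compare against: the paper does not supply its own proof of this lemma but simply cites it as \cite[2.1~Proposition]{MR0930601}. Your parenthetical remark at the end (``Alternatively, one may simply invoke \cite[2.1~Proposition]{MR0930601}'') is in fact precisely what the paper does. So your write-up provides more detail than the paper, not less, and the argument you give is essentially the one underlying Hahn's original proposition.
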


\noindent
For each $\alpha \in R$,
we define a vector $A_{\alpha} \in \MF{a}^{\BS{C}}$
by $B(A, A_{\alpha})=\alpha(A)$ for all $A \in \MF{a}^{\BS{C}}$,
which is called the \textit{restricted root vector} of $\alpha$.
A restricted root $\alpha \in R$ is said to be \textit{real} (resp.\ \textit{imaginary})
if $\alpha$ takes real (resp.\ pure imaginary) values on $\MF{a}$.
It is clear that $A_{\alpha} \in \MF{a}$ (resp.\ $\sqrt{-1}A_{\alpha} \in \MF{a}$)
if and only if $\alpha$ is real (resp.\ imaginary).
For each semisimple pseudo-Riemannian symmetric space,
we will determine all the real restricted roots and all the imaginary restricted roots
(see, Section \ref{sec.austere}).
For this purpose,
we give a useful condition for $\alpha \in R$
to be real or imaginary by using a Cartan involution of $\MF{g}$.
Let $\theta$ be a Cartan involution of $\MF{g}$
such that $\theta$ commutes with $\sigma$
and preserves $\MF{a}$ invariantly
(cf.\ \cite[Lemma 5]{MR567427}).
Let $\MF{g}=\MF{k}+\MF{p}$ denote the Cartan decomposition
corresponding $\theta$.
Since $\MF{a}$ is $\theta$-invariant,
we have $\MF{a}=\MF{k}\cap\MF{a}+\MF{p}\cap\MF{a}$.
Then,
for each $\alpha \in R$,
$\alpha$ takes real values on $\sqrt{-1}(\MF{k}\cap\MF{a})+\MF{p}\cap\MF{a}$
($=:\MF{a}_{\BS{R}}\subset \MF{a}^{\BS{C}}$).
Hence $\alpha$ is real (resp.\ imaginary)
if and only if $\theta(\alpha)=-\alpha$ (resp.\ $\theta(\alpha)=\alpha$).
We can make use of a Satake diagram associated with $(R, \theta, \MF{a})$
to determine subsets $\{\alpha \in R \mid \theta(\alpha)=-\alpha\}$
and $\{\alpha \in R \mid \theta(\alpha)=\alpha\}$ ($=:R_{0}$)
of $R$.
Let $>$ denote the lexicographic ordering in $(\MF{a}_{\BS{R}})^{*}$
with respect to a basis $(A_{1}, \ldots, A_{l},A_{l+1}, \ldots, A_{r})$ of $\MF{a}_{\BS{R}}$
such that $(A_{1},\ldots,A_{l})$ (resp.\ $(A_{l+1},\ldots,A_{r})$) is a basis of $\MF{p}\cap\MF{a}$
(resp.\ $\sqrt{-1}(\MF{k}\cap\MF{a})$),
where $r=\OPE{rank}R$ and $l=\dim (\MF{p}\cap\MF{a})$.
Then the order $>$ becomes a $(-\theta)$-order in $R$ (cf.\ \cite{MR1280269}).
Denote by $\varPsi(R)$ the fundamental system of $R$ with respect to $>$.
Set $\varPsi(R_{0})=\varPsi(R)\cap R_{0}$.
Then we have the following result.

\begin{lemma}[{\cite[Theorem 5.4]{MR1280269}}]\label{lem.satakeinv}
There exists a permutation $p$ of $\varPsi(R)\setminus\varPsi(R_{0})$ with order $2$
such that, for each $\alpha \in \varPsi(R)\setminus\varPsi(R_{0})$,
$(-\theta)(\alpha)\equiv p\alpha$ $(\OPE{mod}\OPE{Span}_{\BS{Z}}\{\alpha \mid \alpha \in \varPsi(R_{0})\})$.
\end{lemma}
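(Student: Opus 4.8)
The plan is to derive the statement from the combinatorics of $(-\theta)$-orders, following the standard route to Satake diagrams (see \cite{MR1280269}). Throughout write $\mu:=-\theta$, regarded as an involutive isometry of the Euclidean root system $R\subset(\MF{a}_{\BS{R}})^{*}$; it carries $R$ onto $R$ because $\theta$ permutes the restricted root spaces and $-\OPE{id}$ preserves $R$. Observe that $\mu\alpha=-\alpha$ holds exactly when $\alpha\in R_{0}$, and that $\mu$ stabilizes $R_{0}$ (on which $\mu=-\OPE{id}$). I would first record the two features of a $(-\theta)$-order that are needed, both part of the theory in \cite{MR1280269}: (a) $\varPsi(R_{0})$ is a fundamental system of $R_{0}$ with positive system $R_{0}\cap R_{+}$; and (b) by the very definition of a $\mu$-order, any $\alpha\in R_{+}$ with $\mu\alpha\neq-\alpha$ satisfies $\mu\alpha\in R_{+}$, so, using bijectivity of $\mu$ and $\mu(R_{0})=R_{0}$, the map $\mu$ restricts to a permutation of $R_{+}\setminus R_{0}$, while $\mu(R_{0}\cap R_{+})=-(R_{0}\cap R_{+})$. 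In particular $\mu(R_{+})=(R_{+}\setminus R_{0})\cup\big({-}(R_{0}\cap R_{+})\big)$.

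Next I would bring in the longest element $w_{0}$ of the Weyl group $W(R_{0})$ relative to $\varPsi(R_{0})$, so that $w_{0}(R_{0}\cap R_{+})=-(R_{0}\cap R_{+})$ by (a). The key point is that $w_{0}$ \emph{permutes} $R_{+}\setminus R_{0}$: for $\delta\in R_{+}\setminus R_{0}$ one has $w_{0}\delta-\delta\in\OPE{Span}_{\BS{Z}}\varPsi(R_{0})$ (an elementary property of $W(R_{0})$), so $w_{0}\delta$ and $\delta$ have the same coefficients on every root of $\varPsi(R)\setminus\varPsi(R_{0})$; since $\delta\notin R_{0}$, at least one such coefficient is positive (otherwise $\delta\in(\OPE{Span}_{\BS{R}}R_{0})\cap R=R_{0}$), whence $w_{0}\delta\in R_{+}$, and $w_{0}\delta\notin R_{0}$ because $w_{0}$ stabilizes $R_{0}$. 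Combining this with (b) gives
\[
w_{0}\mu(R_{+})=w_{0}\big(R_{+}\setminus R_{0}\big)\cup w_{0}\big({-}(R_{0}\cap R_{+})\big)=\big(R_{+}\setminus R_{0}\big)\cup\big(R_{0}\cap R_{+}\big)=R_{+}.
\]
Hence $w_{0}\mu$ is an automorphism of $R$ preserving $R_{+}$, so it permutes $\varPsi(R)$; since it also stabilizes $R_{0}$ and $R_{0}\cap R_{+}$, it stabilizes $\varPsi(R_{0})$, and therefore restricts to a permutation $p$ of $\varPsi(R)\setminus\varPsi(R_{0})$.

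To finish, I would check the two remaining assertions. First, $\mu^{2}=\OPE{id}$ and $\mu w_{0}\mu^{-1}\in W(\mu R_{0})=W(R_{0})$, so $(w_{0}\mu)^{2}=w_{0}\,(\mu w_{0}\mu^{-1})\in W(R_{0})\subset W(R)$; being an element of $W(R)$ that preserves $R_{+}$, it must equal $\OPE{id}$, whence $p^{2}=\OPE{id}$ (so $p$ has order $1$ or $2$, which is what ``order $2$'' means here). Second, for $\alpha\in\varPsi(R)\setminus\varPsi(R_{0})$ we have $p\alpha=w_{0}(\mu\alpha)$ with $\mu\alpha\in R$, and $\mu\alpha-p\alpha=(\OPE{id}-w_{0})(\mu\alpha)\in\OPE{Span}_{\BS{Z}}\varPsi(R_{0})$: this is immediate for a single reflection $s_{\gamma}$ with $\gamma\in R_{0}$ from $(\OPE{id}-s_{\gamma})\delta=\INN{\delta}{\gamma^{\vee}}\gamma$, and it follows for $w_{0}$ by induction on its reduced length, since $W(R_{0})$ stabilizes the lattice $\OPE{Span}_{\BS{Z}}\varPsi(R_{0})$. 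Thus $(-\theta)(\alpha)=\mu\alpha\equiv p\alpha\pmod{\OPE{Span}_{\BS{Z}}\varPsi(R_{0})}$, which is the claim.

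The crux of the argument --- and the only part that is not formal bookkeeping --- is the second paragraph: recognizing that the defining property of a $(-\theta)$-order is exactly what makes $\mu$ permute $R_{+}\setminus R_{0}$, and then verifying that the longest element $w_{0}$ of $W(R_{0})$, although it lies inside $W(R_{0})$, still maps $R_{+}\setminus R_{0}$ to itself, so that the \emph{composite} $w_{0}\mu$ preserves the positive system. A point to keep in mind is that $\mu=-\theta$ need not belong to $W(R)$, so one cannot run a positivity argument for $\mu$ alone; it is the correction by $w_{0}$ that manufactures a positive-system-preserving automorphism of $R$, and hence the permutation $p$. Once this is in place, the order-two property and the congruence drop out of $\mu^{2}=\OPE{id}$ and of $(\OPE{id}-w_{0})$ landing in the root lattice of $R_{0}$.
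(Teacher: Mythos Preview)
The paper does not supply its own proof of this lemma: it is stated with a citation to \cite[Theorem~5.4]{MR1280269} and used as a black box. Your argument is essentially the standard proof of that theorem---introduce the longest element $w_{0}$ of $W(R_{0})$, verify that the composite $w_{0}\mu$ preserves $R_{+}$ (hence permutes $\varPsi(R)$ and, separately, $\varPsi(R_{0})$), and then read off both $p^{2}=\OPE{id}$ and the congruence from $(\OPE{id}-w_{0})$ taking values in the root lattice of $R_{0}$. The reasoning is sound: the two places that need care---showing $(\OPE{Span}_{\BS{R}}R_{0})\cap R=R_{0}$ (which you deduce from $\theta$ acting as $+1$ there) and showing $(w_{0}\mu)^{2}\in W(R_{0})$ preserves $R_{+}$ hence equals $\OPE{id}$---are handled correctly. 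There is nothing to compare against in the paper itself; your write-up is a faithful reconstruction of the proof that the cited reference gives.
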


\noindent
We call the permutation $p$ as in Lemma \ref{lem.satakeinv}
the \textit{Satake involution} of $\varPsi(R)\setminus\varPsi(R_{0})$.
From the Dynkin diagram of $\varPsi(R)$ we define the Satake diagram associated with $(R, \theta, \MF{a})$
as follows.
First, replace a white circle of the Dynkin diagram,
which belongs to $\varPsi(R_{0})$,
with a black circle.
Next,
if restricted roots $\alpha, \beta \in \varPsi(R)\setminus\varPsi(R_{0})$
satisfy $\alpha \neq \beta$ and $p\alpha = \beta$,
join $\alpha$ and $\beta$ with an arrowed segment $\leftrightarrow$.
Note that this Satake diagram
depends on the choice of a Cartan subspace of $\MF{q}$.
A Cartan subspace $\MF{a}$ is said to be \textit{maximally split}
(resp.\ \textit{maximally compact}) if $\MF{p}\cap\MF{a}$ (resp.\ $\MF{k}\cap\MF{a}$)
is a maximal abelian subspace of $\MF{p}\cap\MF{q}$ (resp.\ $\MF{k}\cap\MF{q}$).
The dimension of the $\MF{p}$-part of a maximally split Cartan subspace (MSCS) is called
the \textit{split rank} of $G/H$ (or $(\MF{g}, \MF{h})$).
Any two MSCs are conjugate to each other.
Therefore, the definition of split rank does not depend on the choice of a MSCS.
We can easily determine the rank and the split rank
by using Table 2.5.2 in \cite{MR810638}.
In Table \ref{table.satakelistclassical},
we will determine
the rank, the split rank and
the Satake diagram associated with
$(R, \theta, \MF{a})$ when $\MF{a}$ is maximally split
for all semisimple pseudo-Riemannian symmetric spaces
(see, Appendix \ref{sec.appendix} for the determination).
Here we will often omit $\MF{a}$ for the notation of the Satake diagarm when there is no confusion.

\section{The Jordan-Chevalley decompositions of shape operators}\label{sec.jcshape}

Assume that the $\OPE{Ad}(H)$-orbit $M$ through an $X \in \MF{q}$
is a pseudo-Riemannian submanifold in a pseudo-hypersphere $\BS{S}$.
It follows from Key Lemma
that $X$ is semisimple.
In general,
the shape operator of $M \hookrightarrow \BS{S}$
is not necessarily diagonalizable over $\BS{C}$.
In this section,
for each $\xi \in T^{\perp}_{X}M$,
we give the JC decomposition of the shape operator $A_{\xi}$
in direction $\xi$,
where $A$ denotes the shape tensor of $M \hookrightarrow \BS{S}$.

\begin{lemma}\label{lem.normal}
Let $\xi$ be a normal vector of $M$ at $X$,
and $\xi = \xi_{s}+\xi_{n}$ be the JC decomposition
of $\xi$.
Then $\xi_{s}$, $\xi_{n}$ are normal vectors of $M$ at $X$.
\end{lemma}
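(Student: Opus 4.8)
The plan is to characterize the normal space $T^{\perp}_{X}M$ purely in terms of the adjoint action and then observe that the Jordan--Chevalley decomposition respects this characterization. First I would recall that, since $M$ is the $\OPE{Ad}(H)$-orbit through $X$, the tangent space is $T_{X}M = [\MF{h}, X] = \{[Z, X] \mid Z \in \MF{h}\}$, and hence a vector $\xi \in \MF{q}$ lies in $T^{\perp}_{X}M$ if and only if $B([Z,X], \xi) = 0$ for all $Z \in \MF{h}$. Using the invariance of the Killing form, $B([Z,X],\xi) = B(Z, [X,\xi])$, and since $B$ is nondegenerate on $\MF{g}$ (indeed on $\MF{h}$) and $[X,\xi] \in \MF{h}$ whenever $\xi \in \MF{q}$, this is equivalent to $[X,\xi] = 0$. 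So $\xi \in T^{\perp}_{X}M$ if and only if $\xi \in \MF{q}$ and $[\xi, X] = 0$, exactly the condition already used in the proof of Key Lemma.

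Next I would invoke the functoriality of the Jordan--Chevalley decomposition. Write $\xi = \xi_{s} + \xi_{n}$; by Proposition 1.3.5.1 in \cite{MR0498999}, $\OPE{ad}(\xi_{s})$ and $\OPE{ad}(\xi_{n})$ are the semisimple and nilpotent parts of $\OPE{ad}(\xi)$, and both are polynomials in $\OPE{ad}(\xi)$ without constant term. Since $X \in \MF{q}$ is semisimple and $[\xi, X] = 0$, i.e.\ $\OPE{ad}(\xi)$ annihilates $X$, every polynomial in $\OPE{ad}(\xi)$ also annihilates $X$; in particular $[\xi_{s}, X] = [\xi_{n}, X] = 0$. (This is the same step already carried out in the Key Lemma proof, with the roles of the two commuting elements interchanged.) It remains only to check that $\xi_{s}, \xi_{n} \in \MF{q}$: this is exactly the content of Proposition 2 in \cite{MR567427}, already quoted in Section~\ref{sec.pre}, which says the JC parts of an element of $\MF{q}$ lie in $\MF{q}$. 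Combining the two facts with the characterization of the previous paragraph gives $\xi_{s}, \xi_{n} \in T^{\perp}_{X}M$.

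I do not anticipate a genuine obstacle here; the statement is essentially a packaging of two facts already in play in the excerpt (the description $T^{\perp}_{X}M = \{\xi \in \MF{q} \mid [\xi,X]=0\}$ and the stability of $\MF{q}$ under JC decomposition), together with the elementary observation that the centralizer of a fixed semisimple element is closed under taking JC parts. The only point deserving a line of care is justifying $T_{X}M = [\MF{h},X]$ and that the pairing $B(Z, [X,\xi])$ detects membership in the normal space — i.e.\ that it suffices to test against $[X,\xi] \in \MF{h}$ and use nondegeneracy of $B|_{\MF{h}}$ — but this is standard for orbits of the isotropy representation and is implicitly used already in the Key Lemma.
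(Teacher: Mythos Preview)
Your characterization of $T^{\perp}_X M$ as $\{Y \in \MF{q} \mid [Y, X] = 0\}$ is the normal space of $M$ inside $\MF{q}$, but throughout this section $M$ is treated as a submanifold of the pseudo-hypersphere $\BS{S}$, so $T^{\perp}_X M$ means the normal space inside $\BS{S}$ (compare the decomposition $(T^{\perp}_X M)^{\BS{C}} = (\MF{a}\ominus\BS{R}X)^{\BS{C}} + \cdots$ quoted from \cite{MR0930601}, and Lemma~\ref{lem.ns}, where the $\BS{R}X$-direction has been removed). The correct description is therefore $T^{\perp}_X M = \{Y \in \MF{q} \mid [Y, X] = 0,\ B(Y, X) = 0\}$, and your argument stops one condition short: you establish $[\xi_s,X]=[\xi_n,X]=0$ and $\xi_s,\xi_n\in\MF{q}$, but you never verify $B(\xi_s, X) = B(\xi_n, X) = 0$. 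The paper addresses exactly this missing step: after obtaining (as you do) $\xi_s, \xi_n \in \{Y \in \MF{q} \mid [Y, X] = 0\}$, it invokes a Jacobson--Morozov-type result (Lemma~12 of \cite{MR567427}) to produce $Z \in \MF{h}$ with $[Z, \xi_n] = \xi_n$, and then computes $B(\xi_n, X) = B([Z, \xi_n], X) = B(Z, [\xi_n, X]) = 0$; the relation $B(\xi_s, X) = B(\xi, X) - B(\xi_n, X) = 0$ follows.

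The fix is short but not automatic; it relies either on Jacobson--Morozov as above or on the observation that $B(\xi_n, X) = \OPE{tr}\bigl(\OPE{ad}(\xi_n)\OPE{ad}(X)\bigr)$ vanishes because the product of commuting nilpotent and semisimple operators is nilpotent. Either way, orthogonality to $X$ is the one genuine ingredient your proposal overlooks.
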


\begin{proof}
By using 
Proposition 2 in \cite{MR567427} and
Proposition 1.3.5.1 in \cite{MR0498999}
we have $\xi_{s}, \xi_{n} \in \{Y \in \MF{q} \mid [Y, X]=0\}$.
From Lemma 12 in \cite{MR567427}
there exists a $Z \in \MF{h}$ such that $[Z, X_{n}]=X_{n}$.
Then we have
$B(\xi_{n}, X)=B([Z,\xi_{n}], X)=B([\xi_{n},X],Z)=0$,
where $B$ denotes the Killing form of $\MF{g}$.
Hence $\xi_{n} \in T^{\perp}_{X}M$ holds.
Moreover,
we have $\xi_{s}=\xi-\xi_{n} \in T^{\perp}_{X}M$.
\end{proof}

From above lemma
a decomposition
$A_{\xi}=A_{\xi_{s}}+A_{\xi_{n}}$
is well-defined.

\begin{proposition}\label{prop.jc}
Let $\xi=\xi_{s}+\xi_{n}$ be the JC decomposition of $\xi \in T^{\perp}_{X}M$.
Then the decomposition $A_{\xi} = A_{\xi_{s}}+A_{\xi_{n}}$
gives the JC decomposition of the shape operator $A_{\xi}$, i.e.,
$A_{\xi_{s}}$ is semisimple,
$A_{\xi_{n}}$ is nilpotent,
and $A_{\xi_{s}}A_{\xi_{n}}=A_{\xi_{n}}A_{\xi_{s}}$ hold.
\end{proposition}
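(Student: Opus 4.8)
The plan is to recognise the shape operator $A_{\eta}$ in the direction of a normal vector $\eta$ as the restriction of a purely Lie-theoretic operator built from $\OPE{ad}(\eta)$ and the inverse of $\OPE{ad}(X)$ on its image, and then to read the statement off from the uniqueness of the Jordan--Chevalley decomposition. Since $X$ is semisimple by Key Lemma, $\OPE{ad}(X)\in\OPE{End}(\MF{g}^{\BS{C}})$ is diagonalizable; set $V:=\OPE{Im}\OPE{ad}(X)=\MF{g}^{\BS{C}}\ominus\OPE{Ker}\OPE{ad}(X)$, so that $\OPE{ad}(X)$ restricts to an invertible operator on $V$. The subspace $V$ is $\sigma$-stable, hence $V=(V\cap\MF{h}^{\BS{C}})\oplus(V\cap\MF{q}^{\BS{C}})$, and $\OPE{ad}(X)|_{V}$ interchanges $V\cap\MF{h}^{\BS{C}}$ and $V\cap\MF{q}^{\BS{C}}$. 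Any normal vector $\eta\in T^{\perp}_{X}M$ satisfies $[\eta,X]=0$ (because $[\eta,X]\in\MF{h}$, $B([\eta,X],Z)=-B(\eta,[Z,X])=0$ for all $Z\in\MF{h}$, and $B|_{\MF{h}}$ is nondegenerate), so $\OPE{ad}(\eta)$ commutes with $\OPE{ad}(X)$ and preserves $V$; moreover $(T_{X}M)^{\BS{C}}=\MF{q}^{\BS{C}}\cap V$.

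First I would record the description of the shape operator of an $\OPE{Ad}(H)$-orbit (cf.\ \cite{MR2532897}): using the formula $\OPE{II}(Y_{1},Y_{2})=\bigl([Z_{2},[Z_{1},X]]\bigr)^{\perp}$ for $Y_{i}=[Z_{i},X]\in T_{X}M$ together with the invariance of $B$, one gets $B(A_{\eta}Y_{1},Y_{2})=B(Y_{1},[\eta,Z_{2}])$; since $M\hookrightarrow\BS{S}$ is pseudo-Riemannian, $B$ is nondegenerate on $(T_{X}M)^{\BS{C}}$, so $A_{\eta}Y=P\,[\eta,Z]$ for any $Z$ with $[Z,X]=Y$, where $P$ denotes the $B$-orthogonal projection of $\MF{q}^{\BS{C}}$ onto $(T_{X}M)^{\BS{C}}$. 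Choosing the representative $Z$ in $\MF{h}^{\BS{C}}\cap V$, one may write $A_{\eta}=\pm\bigl(\OPE{ad}(\eta)\circ(\OPE{ad}(X)|_{V})^{-1}\bigr)\big|_{(T_{X}M)^{\BS{C}}}$; and in fact $\OPE{ad}(\eta)\circ(\OPE{ad}(X)|_{V})^{-1}$ already maps $(T_{X}M)^{\BS{C}}=\MF{q}^{\BS{C}}\cap V$ into itself, since $(\OPE{ad}(X)|_{V})^{-1}$ sends it into $\MF{h}^{\BS{C}}\cap V$ and $\OPE{ad}(\eta)$ sends $\MF{h}^{\BS{C}}$ into $\MF{q}^{\BS{C}}$ while preserving $V$, so the projection $P$ may be dropped. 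Thus $A_{\eta}$ is, up to sign, the restriction to the invariant subspace $(T_{X}M)^{\BS{C}}$ of the operator $C_{\eta}:=\OPE{ad}(\eta)\circ(\OPE{ad}(X)|_{V})^{-1}\in\OPE{End}(V)$.

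Now I would apply this with $\eta=\xi,\xi_{s},\xi_{n}$; by Lemma~\ref{lem.normal} the latter two are normal vectors and satisfy $[\xi_{s},X]=[\xi_{n},X]=0$. Since the Jordan--Chevalley decomposition of $\xi$ is the one induced from that of $\OPE{ad}(\xi)$ (cf.\ \cite[Proposition 1.3.5.1]{MR0498999}), the operators $\OPE{ad}(\xi_{s})$ and $\OPE{ad}(\xi_{n})$ are respectively semisimple and nilpotent, they commute with each other and with $\OPE{ad}(X)$, hence with $(\OPE{ad}(X)|_{V})^{-1}$. Therefore, on $V$: $C_{\xi_{s}}$ is a product of commuting semisimple operators, hence semisimple; $C_{\xi_{n}}$ is a nilpotent operator composed with a commuting invertible one, hence nilpotent; $C_{\xi_{s}}$ and $C_{\xi_{n}}$ commute; and $C_{\xi}=C_{\xi_{s}}+C_{\xi_{n}}$. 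Restriction to the invariant subspace $(T_{X}M)^{\BS{C}}$ preserves semisimplicity, nilpotency and commutativity, so $A_{\xi}=A_{\xi_{s}}+A_{\xi_{n}}$ with $A_{\xi_{s}}$ semisimple, $A_{\xi_{n}}$ nilpotent and $A_{\xi_{s}}A_{\xi_{n}}=A_{\xi_{n}}A_{\xi_{s}}$; by uniqueness of the Jordan--Chevalley decomposition this is the desired decomposition of $A_{\xi}$.

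The step I expect to be the main obstacle is the identification in the second paragraph: translating the differential-geometric shape operator of $M\hookrightarrow\BS{S}$ into the algebraic operator $C_{\eta}$. This requires careful bookkeeping of the position vector $X$ as a normal direction of $\BS{S}$ inside $\MF{q}$, of the $\MF{h}$- and $\MF{q}$-components of vectors under $\OPE{ad}(X)^{-1}$ and $\OPE{ad}(\eta)$, and --- crucially --- the use of the nondegeneracy of $B$ on $(T_{X}M)^{\BS{C}}$, i.e.\ the pseudo-Riemannian hypothesis; once this dictionary is in place the remainder is linear algebra. As a variant, one can instead fix a Cartan subspace of $\MF{q}$ containing both $X$ and $\xi_{s}$ (these are commuting semisimple elements of $\MF{q}$, hence lie in a common Cartan subspace) and carry out the computation in terms of the restricted root spaces $\MF{q}^{\BS{C}}_{\alpha}$; on each $\MF{q}^{\BS{C}}_{\alpha}$ with $\alpha(X)\neq 0$ one finds that $A_{\xi_{s}}$ acts by the scalar $-\alpha(\xi_{s})/\alpha(X)$, which at the same time yields the spectrum of $A_{\xi}$.
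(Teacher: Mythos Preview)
Your proof is correct, but it takes a somewhat different route from the paper's. The paper argues via two auxiliary lemmas computed on restricted root spaces: after noting the formula $A_{\eta}[Z,X]=-[Z,\eta]$, it fixes a Cartan subspace of $\MF{q}$ containing $X$ and $\xi_{s}$ and shows directly that $A_{\xi_{s}}$ acts on each $\MF{q}^{\BS{C}}_{\alpha}$ (with $\alpha(X)\neq0$) by the scalar $-\alpha(\xi_{s})/\alpha(X)$ (hence is semisimple), and then, fixing a Cartan subspace containing $X$, it computes $(A^{\BS{C}}_{\xi_{n}})^{n}$ on each $\MF{q}^{\BS{C}}_{\alpha}$ and sees the power $\OPE{ad}(\xi_{n})^{n}$ appear (hence nilpotent). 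Commutativity is then asserted from $[\xi_{s},\xi_{n}]=0$, and uniqueness finishes. This is essentially the ``variant'' you sketch in your last paragraph.

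Your main argument is more coordinate-free: you package the shape operator as the restriction to the invariant subspace $(T_{X}M)^{\BS{C}}$ of $C_{\eta}=\OPE{ad}(\eta)\circ(\OPE{ad}(X)|_{V})^{-1}$ and read everything off from the fact that $\OPE{ad}(\xi_{s})$, $\OPE{ad}(\xi_{n})$ and $(\OPE{ad}(X)|_{V})^{-1}$ pairwise commute. This has the advantage that the commutation $A_{\xi_{s}}A_{\xi_{n}}=A_{\xi_{n}}A_{\xi_{s}}$ becomes transparent rather than an unproved one-liner, and that no Cartan subspace need be chosen. The paper's root-space approach, on the other hand, delivers the spectrum of $A_{\xi}$ (its Corollary~\ref{cor.spectrum}) as an immediate by-product, which is exactly what is needed for the austerity criterion later; in your write-up you recover this only through the variant.
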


The proof of Proposition \ref{prop.jc} requires some preparation.
For any $Z \in \MF{h}$,
we define a tangent vector field $Z^{*}$ on $M$
by $Z^{*}_{p}= (d/dt)|_{t=0}\OPE{Ad}(\exp tZ)p$ for all $p \in M$.
Then we have $A_{\xi}Z^{*}_{X}=-[Z, \xi]$ for all $\xi \in T^{\perp}_{X}M$.

\begin{lemma}\label{lem.semi}
For each semisimple $\xi \in T^{\perp}_{X}M$,
$A_{\xi}$ is semisimple.
Moreover, if $R$ the restricted root system
with respect to a Cartan subspace of $\MF{q}$
containing $X$ and $\xi$,
we have the spectrum of $A_{\xi}^{\BS{C}}$ as follows:
\begin{equation}\label{eq.speca}
\OPE{Spec}A_{\xi}^{\BS{C}}
=\left\{
-\dfrac{\alpha(\xi)}{\alpha(X)} \biggm| \alpha \in R_{+} \text{ with }\alpha(X) \neq 0
\right\}.
\end{equation}
\end{lemma}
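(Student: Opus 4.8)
The plan is to compute the shape operator explicitly on the root-space decomposition supplied by the Lemma of \cite{MR0930601}. Fix a Cartan subspace $\MF{a}$ of $\MF{q}$ containing both $X$ and the semisimple normal vector $\xi$ (this is possible since $\xi_{s}=\xi$ is semisimple, commutes with $X$, and any commuting set of semisimple elements lies in a common Cartan subspace). By that Lemma we have the orthogonal decomposition $(T_{X}M)^{\BS{C}} = \sum_{\alpha \in R_{+},\ \alpha(X)\neq 0}\MF{q}^{\BS{C}}_{\alpha}$, so it suffices to understand how $A_{\xi}^{\BS{C}}$ acts on each summand $\MF{q}^{\BS{C}}_{\alpha}$.

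First I would realize the tangent vectors at $X$ as fundamental vector fields: for $Y \in \MF{q}^{\BS{C}}_{\alpha}$ with $\alpha(X)\neq 0$, one can write $Y$ (up to scalar) as $Z^{*}_{X}=[Z,X]$ for a suitable $Z \in \MF{h}^{\BS{C}}_{\alpha}$, because $\OPE{ad}(X)$ maps $\MF{h}^{\BS{C}}_{\alpha}$ isomorphically onto $\MF{q}^{\BS{C}}_{\alpha}$ when $\alpha(X)\neq 0$. Then, using the identity $A_{\xi}Z^{*}_{X}=-[Z,\xi]$ recorded just before the statement, I would compute $A_{\xi}^{\BS{C}}Y = -[Z,\xi]$. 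The key point is that on $\MF{h}^{\BS{C}}_{\alpha}\oplus\MF{q}^{\BS{C}}_{\alpha}$ the operator $\OPE{ad}(A)$ for $A\in\MF{a}^{\BS{C}}$ acts by the off-diagonal involution swapping the two pieces with eigenvalue data governed by $\alpha(A)$; concretely, if $[Z,X]=Y$ then $[Z,\xi] = (\alpha(\xi)/\alpha(X))\,Y$ since, schematically, $\OPE{ad}(\xi)$ and $\OPE{ad}(X)$ act on this two-dimensional-type block as $\alpha(\xi)$ and $\alpha(X)$ times the same swap map. Hence $A_{\xi}^{\BS{C}}Y = -\dfrac{\alpha(\xi)}{\alpha(X)}Y$, so $\MF{q}^{\BS{C}}_{\alpha}$ is an eigenspace of $A_{\xi}^{\BS{C}}$ with eigenvalue $-\alpha(\xi)/\alpha(X)$. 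Since $(T_{X}M)^{\BS{C}}$ is the direct sum of these eigenspaces over $\alpha\in R_{+}$ with $\alpha(X)\neq0$, the operator $A_{\xi}^{\BS{C}}$ is diagonalizable, i.e.\ $A_{\xi}$ is semisimple, and its spectrum is exactly the set in \eqref{eq.speca}.

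**The main obstacle** I anticipate is the careful bookkeeping on the spaces $\MF{h}^{\BS{C}}_{\alpha}$ and $\MF{q}^{\BS{C}}_{\alpha}$: these are defined via $\OPE{ad}(A)^{2}$ rather than $\OPE{ad}(A)$, so $\MF{h}^{\BS{C}}_{\alpha}=\MF{h}^{\BS{C}}_{-\alpha}$ and similarly for $\MF{q}$, and one must track how $\MF{g}^{\BS{C}}_{\alpha}\oplus\MF{g}^{\BS{C}}_{-\alpha}$ decomposes under $\sigma$ into the $\MF{h}$- and $\MF{q}$-parts, then verify that $\OPE{ad}(X)$ indeed restricts to an isomorphism $\MF{h}^{\BS{C}}_{\alpha}\to\MF{q}^{\BS{C}}_{\alpha}$ precisely when $\alpha(X)\neq0$, with inverse scaled by $\alpha(X)^{-1}$, and that $\OPE{ad}(\xi)$ on the same block is $\alpha(\xi)$ times the corresponding swap. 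A clean way to organize this is to split into the $\MF{g}^{\BS{C}}_{\alpha}$ and $\MF{g}^{\BS{C}}_{-\alpha}$ pieces where $\OPE{ad}(A)$ acts by honest scalars $\pm\alpha(A)$, do the computation there, and then project back to $\MF{q}^{\BS{C}}$; the $\pm$ signs cancel consistently in the ratio $\alpha(\xi)/\alpha(X)$, which is why only $R_{+}$ appears in \eqref{eq.speca}. Once this linear-algebra lemma on a single $\alpha$-block is in hand, the rest is immediate from the orthogonal decomposition already proved.
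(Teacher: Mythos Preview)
Your proposal is correct and follows essentially the same route as the paper: choose a Cartan subspace $\MF{a}$ containing both $X$ and $\xi$, use the decomposition $(T_{X}M)^{\BS{C}}=\sum_{\alpha(X)\neq 0}\MF{q}^{\BS{C}}_{\alpha}$, and show $\MF{q}^{\BS{C}}_{\alpha}\subset\OPE{Ker}\bigl(A_{\xi}^{\BS{C}}+\tfrac{\alpha(\xi)}{\alpha(X)}\OPE{id}\bigr)$ via the identity $A_{\xi}Z^{*}_{X}=-[Z,\xi]$. The paper simply asserts this last inclusion, whereas you spell out the mechanism (writing $Y=[Z,X]$ with $Z\in\MF{h}^{\BS{C}}_{\alpha}$ and comparing $[Z,\xi]$ to $[Z,X]$ on the $\MF{g}^{\BS{C}}_{\pm\alpha}$ pieces); your ``main obstacle'' paragraph is exactly the bookkeeping the paper suppresses, and it goes through as you describe.
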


\begin{proof}
Let $\MF{a}$ be a Cartan subspace of $\MF{q}$
containing $X$ and $\xi$,
and $R$ denote the restricted root system with respect to $\MF{a}$.
For each $\alpha \in R$ with $\alpha(X) \neq 0$,
we obtain $\MF{q}^{\BS{C}}_{\alpha} \subset \OPE{Ker}\left(A_{\xi}^{\BS{C}}+\dfrac{\alpha(\xi)}{\alpha(X)}\OPE{id}\right)$,
where $\OPE{id}$ denotes the identity transformation on $(T_{X}M)^{\BS{C}}$.
Therefore we have
\begin{equation*}
(T_{X}M)^{\BS{C}}
=\sum_{\alpha \in R_{+}: \alpha(X)\neq0}\MF{q}^{\BS{C}}_{\alpha}
\subset \sum_{\alpha \in R_{+}: \alpha(X)\neq0}\OPE{Ker}\left(A_{\xi}^{\BS{C}}+\dfrac{\alpha(\xi)}{\alpha(X)}\OPE{id}\right)
\subset (T_{X}M)^{\BS{C}},
\end{equation*}
where $R_{+}$ is a positive root system of $R$.
This implies that $A_{\xi}^{\BS{C}}$ is semisimple
and its spectrum is as in (\ref{eq.speca}).
\end{proof}

\begin{lemma}\label{lem.nilp}
For each nilpotent $\xi \in T^{\perp}_{X}M$,
$A_{\xi}$ is nilpotent.
\end{lemma}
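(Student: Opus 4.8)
\smallskip
\noindent\textbf{Proof plan.}
We may assume $\xi\neq 0$, since otherwise $A_{\xi}=0$. The plan is to manufacture a single element $Z_{0}\in\MF{h}$ that commutes with $X$, has semisimple adjoint action on $\MF{g}$, and satisfies $[Z_{0},\xi]=\xi$, and then to deduce the nilpotency of $A_{\xi}$ from the eigenspace grading of $\OPE{ad}(Z_{0})$. To produce $Z_{0}$ I would first pass to the centralizer of $X$. By Key Lemma $X$ is semisimple, so $\MF{g}(X):=\{Y\in\MF{g}\mid[Y,X]=0\}$ is a $\sigma$-invariant reductive subalgebra of $\MF{g}$, and $\xi$ (which lies in $\MF{g}(X)\cap\MF{q}$ because $[\xi,X]=0$) is a nilpotent element of its semisimple part $[\MF{g}(X),\MF{g}(X)]$. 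Applying Lemma 12 in \cite{MR567427} inside that $\sigma$-invariant semisimple symmetric subalgebra to the nilpotent $\xi$, and then replacing the element obtained by its semisimple part, I would get $Z_{0}\in\MF{h}$ with $[Z_{0},X]=0$, $[Z_{0},\xi]=\xi$, and $\OPE{ad}(Z_{0})\in\OPE{End}(\MF{g})$ semisimple. (Passing to the semisimple part keeps $Z_{0}$ in $\MF{h}$ by $\sigma$-invariance of the Jordan-Chevalley decomposition, keeps $[Z_{0},X]=0$ because the adjoint action of the semisimple part is a polynomial in $\OPE{ad}(Z_{0})$, and keeps $[Z_{0},\xi]=\xi$ because $\xi$ lies in the actual $1$-eigenspace of $\OPE{ad}(Z_{0})$; alternatively one may take $Z_{0}$ to be the neutral element of a $\sigma$-equivariant $\MF{sl}_{2}$-triple through $\xi$ in $\MF{g}(X)$.)

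With $Z_{0}$ in hand, I would grade $\MF{g}^{\BS{C}}=\sum_{\mu}\MF{g}^{\BS{C}}(\mu)$ into eigenspaces of $\OPE{ad}(Z_{0})$. Since $[Z_{0},X]=0$, the operator $\OPE{ad}(Z_{0})$ commutes with $\OPE{ad}(X)$, hence preserves $(T_{X}M)^{\BS{C}}=[\MF{h}^{\BS{C}},X]$; it also preserves $\MF{h}^{\BS{C}}$, and the surjection $\MF{h}^{\BS{C}}\to(T_{X}M)^{\BS{C}}$, $Z\mapsto Z^{*}_{X}=[Z,X]$, is $\OPE{ad}(Z_{0})$-equivariant. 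Thus $(T_{X}M)^{\BS{C}}$ inherits a finite grading $\sum_{\mu}V(\mu)$ with $V(\mu)=(T_{X}M)^{\BS{C}}\cap\MF{g}^{\BS{C}}(\mu)$, and every $v\in V(\mu)$ can be written $v=[Z,X]$ with $Z\in\MF{h}^{\BS{C}}$, $[Z_{0},Z]=\mu Z$. Using the Weingarten identity $A_{\xi}Z^{*}_{X}=-[Z,\xi]$ (extended $\BS{C}$-linearly) and the Jacobi identity, for such a $v$:
\[
[Z_{0},\,A_{\xi}^{\BS{C}}v]=-[Z_{0},[Z,\xi]]=-[[Z_{0},Z],\xi]-[Z,[Z_{0},\xi]]=-(\mu+1)[Z,\xi]=(\mu+1)\,A_{\xi}^{\BS{C}}v ,
\]
so $A_{\xi}^{\BS{C}}(V(\mu))\subset V(\mu+1)$. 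As the grading of $(T_{X}M)^{\BS{C}}$ has only finitely many nonzero pieces, a degree-raising operator on it is nilpotent; hence $A_{\xi}^{\BS{C}}$, and therefore $A_{\xi}$, is nilpotent.

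The hard part will be the first step --- securing $Z_{0}$ inside the centralizer $\MF{g}(X)$, so that the crucial relation $[Z_{0},X]=0$ holds and the grading of $(T_{X}M)^{\BS{C}}$ is available. After that, the argument is the same $\MF{sl}_{2}$-type bookkeeping already used, via Lemma 12 in \cite{MR567427}, in the proofs of Key Lemma and Lemma \ref{lem.normal}.
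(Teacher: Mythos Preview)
Your argument is correct, but it is genuinely different from the paper's proof. The paper does not construct an auxiliary grading element $Z_{0}$ at all. Instead it works directly with the restricted root space decomposition relative to a Cartan subspace $\MF{a}\ni X$: since $[\xi,X]=0$, the operators $\OPE{ad}(\xi)$ and $\OPE{ad}(X)$ commute, and on the $\OPE{ad}(X)$-nondegenerate part one has $A_{\xi}^{\BS{C}}=-\OPE{ad}(\xi)\circ\OPE{ad}(X)^{-1}$; iterating and evaluating on $\MF{q}^{\BS{C}}_{\alpha}$ gives $(A_{\xi}^{\BS{C}})^{n}Y=\pm\alpha(X)^{-m}\OPE{ad}(\xi)^{n}(\cdots)Y$, which vanishes once $n$ exceeds the nilpotency index of $\OPE{ad}(\xi)$. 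That route is shorter, uses only the machinery already in place for Lemma~\ref{lem.semi}, and in particular does not require the reductivity of $\MF{g}_{X}$ (which the paper proves only afterwards, in Lemma~\ref{lem.gx}). Your approach, by contrast, front-loads that structural fact together with a Jacobson--Morozov step inside the centralizer; what it buys is a clean conceptual picture --- $A_{\xi}$ is visibly a degree-$(+1)$ operator for a finite $\OPE{ad}(Z_{0})$-grading --- and it avoids any explicit root-space bookkeeping. One small point worth making explicit in your write-up: the reason $\xi$ lies in the semisimple part $[\MF{g}(X),\MF{g}(X)]$ is that $B(\xi,\MF{c}_{X})=0$ (nilpotent times commuting semisimple is traceless) and $B|_{\MF{c}_{X}}$ is nondegenerate.
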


\begin{proof}
Denote by $R$ the restricted root system with respect to a Cartan subspace
of $\MF{q}$ containing $X$.
For each $\alpha \in R$ with $\alpha(X) \neq 0$ and $n \in \BS{N}$
\begin{equation*}
(A_{\xi}^{\BS{C}})^{n}Y
=\begin{cases}
\dfrac{1}{\alpha(X)^{n}}\OPE{ad}(\xi)^{n}Y & (n:\text{even}),\\
-\dfrac{1}{\alpha(X)^{n+1}}\OPE{ad}(\xi)^{n}\OPE{ad}(X)Y & (n:\text{odd}),
\end{cases}
\end{equation*}
for all $Y \in \MF{q}^{\BS{C}}_{\alpha}$.
Since $\xi$ is nilpotent, we have $\OPE{ad}(\xi)^{n_{0}}=0$ for some integer
$n_{0} \in \BS{N}$.
Hence we have $(A_{\xi}^{\BS{C}})^{n_{0}}=0$,
i.e., $A_{\xi}$ is nilpotent.
\end{proof}

\begin{proof}[\indent\textsc{Proof of Proposition} $\ref{prop.jc}$]
Let $\xi=\xi_{s}+\xi_{n}$
be the JC decomposition of $\xi \in T^{\perp}_{X}M$.
It follows from Lemmas \ref{lem.semi} and \ref{lem.nilp}
that $A_{\xi_{s}}$ and $A_{\xi_{n}}$
are semisimple and nilpotent,
respectively.
Since $[\xi_{s},\xi_{n}]=0$ holds,
$A_{\xi_{s}}$ and $A_{\xi_{n}}$
commute with each other.
It follows from the uniqueness of the JC decomposition
of $A_{\xi}$ that
$A_{\xi_{s}}$ and $A_{\xi_{n}}$
coincide with the semisimple part and the nilpotent part
of $A_{\xi}$,
respectively.
\end{proof}

By using Proposition \ref{prop.jc} and Lemma \ref{lem.semi}
we have the following result.

\begin{corollary}\label{cor.spectrum}
Let $\xi = \xi_{s}+\xi_{n}$ be the JC decomposition of $\xi \in T^{\perp}_{X}M$,
and $R$ denote the restricted root system with respect to a Cartan subspace
of $\MF{q}$ containing $X$ and $\xi_{s}$.
The spectrum of $A_{\xi}^{\BS{C}}$ is given as follows:
\begin{equation*}
\OPE{Spec}A_{\xi}^{\BS{C}} =
\left\{
-\dfrac{\alpha(\xi_{s})}{\alpha(X)} \biggm| \alpha \in R \text{ with }\alpha(X) \neq 0
\right\}.
\end{equation*}
\end{corollary}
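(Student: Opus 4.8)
The plan is to combine Proposition \ref{prop.jc} with Lemma \ref{lem.semi}, the only subtlety being that the Cartan subspace we are handed contains $X$ and $\xi_{s}$ (the semisimple part of $\xi$) rather than $\xi$ itself, which is exactly why the statement is phrased in terms of $\xi_{s}$. First I would invoke Proposition \ref{prop.jc}: the decomposition $A_{\xi}=A_{\xi_{s}}+A_{\xi_{n}}$ is the JC decomposition of $A_{\xi}$, so $A_{\xi_{n}}$ is nilpotent and commutes with $A_{\xi_{s}}$. Since the spectrum of an operator is unchanged by adding a commuting nilpotent part (on each generalized eigenspace of the semisimple part the nilpotent part is strictly upper triangular, hence contributes nothing to the eigenvalues), we get $\OPE{Spec}A_{\xi}^{\BS{C}}=\OPE{Spec}A_{\xi_{s}}^{\BS{C}}$.

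Next I would apply Lemma \ref{lem.semi} to the semisimple normal vector $\xi_{s}\in T^{\perp}_{X}M$ (it is indeed normal by Lemma \ref{lem.normal}). Lemma \ref{lem.semi} was stated for a Cartan subspace containing both $X$ and the semisimple vector in question; here that vector is $\xi_{s}$, so the hypothesis ``a Cartan subspace of $\MF{q}$ containing $X$ and $\xi_{s}$'' is precisely what is needed. One must first check such a Cartan subspace exists: $X$ and $\xi_{s}$ are commuting semisimple elements of $\MF{q}$ (they commute since $[\xi,X]=0$ forces $[\xi_{s},X]=0$ by Proposition 1.3.5.1 in \cite{MR0498999}, and $\xi_{s}$ is semisimple by construction), so they lie in a common maximal abelian subspace consisting of semisimple elements, i.e.\ a Cartan subspace. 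With that $\MF{a}$ and its restricted root system $R$, Lemma \ref{lem.semi} gives
\begin{equation*}
\OPE{Spec}A_{\xi_{s}}^{\BS{C}}
=\left\{-\dfrac{\alpha(\xi_{s})}{\alpha(X)} \biggm| \alpha \in R_{+} \text{ with }\alpha(X)\neq0\right\}.
\end{equation*}

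Finally I would observe that the set on the right is unchanged if $R_{+}$ is replaced by all of $R$: for $\alpha\in R$ with $\alpha(X)\neq0$ we also have $-\alpha\in R$ with $(-\alpha)(X)\neq0$, and $-\dfrac{(-\alpha)(\xi_{s})}{(-\alpha)(X)}=-\dfrac{\alpha(\xi_{s})}{\alpha(X)}$, so summing over $R$ rather than $R_{+}$ produces the same set of values. Chaining the three equalities $\OPE{Spec}A_{\xi}^{\BS{C}}=\OPE{Spec}A_{\xi_{s}}^{\BS{C}}=\{-\alpha(\xi_{s})/\alpha(X)\mid \alpha\in R_{+},\ \alpha(X)\neq0\}=\{-\alpha(\xi_{s})/\alpha(X)\mid \alpha\in R,\ \alpha(X)\neq0\}$ gives the claim. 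I do not expect any serious obstacle here; the only point requiring care is the first one—justifying that passing to the commuting nilpotent part $A_{\xi_{n}}$ does not alter the spectrum—but this is immediate from the structure of the JC decomposition established in Proposition \ref{prop.jc}, so the corollary is essentially a bookkeeping consequence of the two lemmas.
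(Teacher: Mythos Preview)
Your proposal is correct and follows exactly the route the paper indicates: the paper simply states that the corollary follows ``by using Proposition~\ref{prop.jc} and Lemma~\ref{lem.semi}'' without further detail, and your argument spells out precisely this combination, including the harmless passage from $R_{+}$ to $R$ and the standard fact that a commuting nilpotent perturbation leaves the spectrum unchanged.
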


\section{Austere orbits}\label{sec.austere}

First, we recall the notion of austere submanifolds.

\begin{definition}[{\cite[Definition 3.15]{MR0666108}, \cite[p.\ 27]{MR2722116}}]
Let $\tilde{M}$ be a pseudo-Riemannian manifold.
A pseudo-Riemannian submanifold $M \hookrightarrow \tilde{M}$
is said to be \textit{austere} if,
for all $x \in M$ and $\xi \in T_{x}M$,
all the coefficients of odd degree for
the characteristic polynomial of $A_{\xi}$
vanish.
\end{definition}

\noindent
We can prove that
$M$ is austere if and only if,
for all $x \in M$ and $\xi \in T^{\perp}_{x}M$,
$\OPE{Spec}A_{\xi}^{\BS{C}}$
is invariant (considering multiplicities) under the multiplication by $-1$.
Therefore,
it is clear that
any austere submanifold has zero mean curvature.
In this section,
we will give an equivalent condition for
the austerity
when $M \hookrightarrow \BS{S}$ ($\subset \MF{q}$)
is an orbit of the isotropy representation
for a semisimple pseudo-Riemannian symmetric space,
which is identified with an $\OPE{Ad}(H)$-orbit
as we mentioned in Section \ref{sec.pre}.
Moreover,
we will give examples of austere orbits by using the condition.
Assume that the $\OPE{Ad}(H)$-orbit
$M$ through $X \in \MF{q}$
is a pseudo-Riemannian submanifold
in $\BS{S}$.
This implies that $X$ is a semisimple in $\MF{q}$.

\begin{proposition}\label{prop.cri}
Let $\MF{a}$ be a Cartan subspace of $\MF{q}$ containing $X$,
and $R$ denote the restricted root system with respect to $\MF{a}$.
Then $M$ is an austere orbit in $\BS{S}$
if and only if
$\{(-1/\alpha(X))p_{X}(\alpha) \mid \alpha \in R_{+} \text{ with } \alpha(X) \neq 0\}$ is invariant $($considering multiplicities$)$ under the multiplication by $-1$,
where $p_{X}$ denotes
the orthogonal projection along $X$.
\end{proposition}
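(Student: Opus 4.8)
The plan is to reduce the austerity condition to the spectral description already available from Section \ref{sec.jcshape}, and then to rewrite that spectrum intrinsically in terms of the restricted root system. First I would recall the criterion stated just after the definition of austere: $M$ is austere if and only if for every $x\in M$ and every $\xi\in T^{\perp}_{x}M$ the spectrum $\OPE{Spec}A_{\xi}^{\BS{C}}$ is invariant, counting multiplicities, under multiplication by $-1$. Since $M$ is an $\OPE{Ad}(H)$-orbit and $\OPE{Ad}(H)$ acts transitively on $M$, it suffices to check this condition at the single point $X$; equivariance of the shape tensor under $\OPE{Ad}(H)$ transports the conclusion to every other point. So the whole statement is a condition on the family $\{A_{\xi}\mid \xi\in T^{\perp}_{X}M\}$.

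Next I would invoke Corollary \ref{cor.spectrum}: for $\xi\in T^{\perp}_{X}M$ with JC decomposition $\xi=\xi_{s}+\xi_{n}$, one has $\OPE{Spec}A_{\xi}^{\BS{C}}=\{-\alpha(\xi_{s})/\alpha(X)\mid \alpha\in R,\ \alpha(X)\neq0\}$, where $R$ here should be taken with respect to a Cartan subspace containing both $X$ and $\xi_{s}$. The key point is that $\xi_{s}$, being a semisimple element of $\MF{q}$ commuting with $X$, lies in $\MF{a}^{\BS{C}}$ after a suitable $\OPE{Ad}(H)$-adjustment fixing $X$ — more precisely, $\xi_{s}$ lies in the centralizer $\MF{q}_{X}$ of $X$ in $\MF{q}$, and every semisimple element of $\MF{q}_{X}$ lies in a Cartan subspace of $\MF{q}$ containing $X$; but since our $\MF{a}$ is a \emph{fixed} Cartan subspace containing $X$, I would instead argue that the normal space $T^{\perp}_{X}M$ and the relevant piece of the spectrum can be read off using $\MF{a}$ itself, because by the Lemma of \cite{MR0930601} quoted above, $(T^{\perp}_{X}M)^{\BS{C}}=(\MF{a}\ominus\BS{R}X)^{\BS{C}}+\sum_{\alpha(X)=0}\MF{q}^{\BS{C}}_{\alpha}$. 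Taking $\xi$ to range over the ``flat part'' $\MF{a}\ominus\BS{R}X$ is enough: for $\xi\in\MF{a}\ominus\BS{R}X$ one has $\xi=\xi_{s}$ and $\alpha(\xi)=B(A_{\alpha},\xi)$; writing $A_{\alpha}=p_{X}(A_{\alpha})+\big(\alpha(X)/B(X,X)\big)X$ and noting $\xi\perp X$, one gets $\alpha(\xi)=B(p_{X}(A_{\alpha}),\xi)$. Hence $\OPE{Spec}A_{\xi}^{\BS{C}}=\{-B(p_{X}(A_{\alpha}),\xi)/\alpha(X)\mid \alpha\in R_{+},\ \alpha(X)\neq0\}$ (each $\pm\alpha$ pair contributing the same value). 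So the family $\{-(1/\alpha(X))p_{X}(A_{\alpha})\mid\alpha\in R_{+},\ \alpha(X)\neq0\}$ of vectors in $\MF{a}\ominus\BS{R}X$ controls all these spectra simultaneously via pairing with $\xi$.

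Now the equivalence becomes an elementary linear-algebra fact: a finite multiset $\{v_{1},\dots,v_{N}\}$ of vectors in a pseudo-Euclidean space $V$ with nondegenerate form has the property that for every $\xi\in V$ the multiset $\{B(v_{i},\xi)\}$ is $(-1)$-invariant if and only if the multiset $\{v_{i}\}$ itself is $(-1)$-invariant. One direction is immediate; for the other, if $\{v_{i}\}$ were not $(-1)$-invariant, pick a linear functional — realized as $B(\cdot,\xi)$ by nondegeneracy — separating the multiset $\{v_{i}\}$ from $\{-v_{i}\}$, which forces $\{B(v_{i},\xi)\}\neq\{-B(v_{i},\xi)\}$; a standard genericity/Zariski-density argument handles the ``considering multiplicities'' bookkeeping (the finitely many $\xi$ where some pairing accidentally coincides with another form a proper subvariety). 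Applying this with $v_{\alpha}=-(1/\alpha(X))p_{X}(A_{\alpha})$ gives exactly the stated criterion, since $p_{X}(\alpha)$ in the statement denotes the restricted root $\alpha$ transported to the vector $p_{X}(A_{\alpha})$ under the identification $\MF{a}_{\BS{R}}^{*}\cong\MF{a}_{\BS{R}}$ via $B$. Finally one checks the nilpotent part is harmless: for $\xi=\xi_{s}+\xi_{n}$ the spectrum depends only on $\xi_{s}$ by Corollary \ref{cor.spectrum}, so restricting to $\xi\in\MF{a}\ominus\BS{R}X$ (all semisimple) loses nothing, and conversely austerity at these $\xi$ already forces the vector multiset to be $(-1)$-invariant, which then yields $(-1)$-invariance of $\OPE{Spec}A_{\xi}^{\BS{C}}$ for \emph{all} normal $\xi$.

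The main obstacle I anticipate is the precise handling of the ``considering multiplicities'' clause together with the appearance of roots $\alpha$ with $\alpha(X)=0$: those $\alpha$ contribute normal directions but not tangential eigenvalues, so one must be careful that the multiset in the statement is indexed only over $\{\alpha\in R_{+}:\alpha(X)\neq0\}$ and that multiplicities are counted with $\dim\MF{q}^{\BS{C}}_{\alpha}$, which is automatic because in Lemma \ref{lem.semi} each $\MF{q}^{\BS{C}}_{\alpha}$ sits inside a single eigenspace of $A_{\xi}^{\BS{C}}$. The separating-functional step also needs the pseudo-Euclidean form $B|_{\MF{a}}$ to be nondegenerate, which holds since $\MF{a}$ is a Cartan subspace; this is where the Key Lemma (semisimplicity of $X$) is implicitly used.
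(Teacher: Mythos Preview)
Your argument contains a genuine gap in the direction ``multiset $(-1)$-invariant $\Rightarrow$ austere''. You correctly reduce to semisimple normal vectors via Corollary~\ref{cor.spectrum}, but then you restrict attention to $\xi\in\MF{a}\ominus\BS{R}X$ for the \emph{fixed} Cartan subspace $\MF{a}$ and claim this ``loses nothing''. It does lose something: a general semisimple normal vector $\xi_{s}$ lies in $\MF{a}'\ominus\BS{R}X$ for \emph{some} Cartan subspace $\MF{a}'\ni X$ (Lemma~\ref{lem.ns}), but in the pseudo-Riemannian setting not all such $\MF{a}'$ are $\OPE{Ad}(H_{X})$-conjugate to $\MF{a}$. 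Your aside about an ``$\OPE{Ad}(H)$-adjustment fixing $X$'' is therefore false as stated---$\OPE{Ad}(H)$ preserves the real form, so if it moved $\xi_{s}$ into $\MF{a}^{\BS{C}}$ it would already move it into $\MF{a}$, which need not be possible. This is precisely the difficulty the paper flags in the Introduction. What is true, and what the paper invokes at this step, is Proposition~\ref{prop.conj}: any two $\theta$-invariant representatives $\MF{a}_{i},\MF{a}_{j}$ for $\MC{CS}_{X}/H_{X}$ are conjugate over $\BS{C}$ by an automorphism $\psi$ of $\MF{g}^{\BS{C}}$ commuting with $\sigma$ and fixing $X$. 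This $\psi$ identifies the restricted root systems and carries the multiset $\{(-1/\alpha(X))p_{X}(\alpha)\}$ for $\MF{a}$ to the corresponding multiset for $\MF{a}'$, so $(-1)$-invariance for one Cartan subspace propagates to all of them and hence (via Lemma~\ref{lem.ns} and Proposition~\ref{prop.jc}) to every normal vector.

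Without Proposition~\ref{prop.conj} or an equivalent complex conjugacy result, your final sentence---that $(-1)$-invariance of the multiset ``yields $(-1)$-invariance of $\OPE{Spec}A_{\xi}^{\BS{C}}$ for \emph{all} normal $\xi$''---is an assertion, not an argument. The remainder of your outline (homogeneity reduction to $X$, the separating-functional argument in place of Lemma~\ref{lem.v}, and the bookkeeping of multiplicities via $\dim\MF{q}^{\BS{C}}_{\alpha}$) matches the paper's approach and is fine.
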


\noindent
The proof of Proposition \ref{prop.cri}
requires some preparation.

\begin{lemma}\label{lem.ns}
Let $\MC{CS}_{X}$
denote the set of all Cartan subspaces of $\MF{q}$
containing $X$.
Then the set $(T^{\perp}_{X}M)_{s}$ of all semisimple normal vectors
in $T^{\perp}_{X}M$ is given as follows$:$
\begin{equation}\label{eq.snormal}
(T^{\perp}_{X}M)_{s}= \bigcup_{\MF{a} \in \MC{CS}_{X}}(\MF{a}\ominus\BS{R}X).
\end{equation}
\end{lemma}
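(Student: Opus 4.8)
The plan is to prove the two inclusions in~(\ref{eq.snormal}) directly, after recording what the normal space is. Since $M=\OPE{Ad}(H)\cdot X$, we have $T_{X}M=\OPE{ad}(\MF{h})X$, and the invariance identity $B(\OPE{ad}(Z)X,\xi)=-B([\xi,X],Z)$ shows that $\xi\in\MF{q}$ is $B$-orthogonal to $T_{X}M$ precisely when $[\xi,X]=0$ (using that $[\xi,X]\in\MF{h}$ and that $B$ is nondegenerate on $\MF{h}$, which holds because $\MF{g}=\MF{h}\oplus\MF{q}$ is $B$-orthogonal and $B$ is nondegenerate on $\MF{g}$). Intersecting with $T_{X}\BS{S}=\{\xi\mid B(\xi,X)=0\}$, i.e.\ removing the nondegenerate normal line $\BS{R}X$ of $\BS{S}$ at $X$, this gives $T^{\perp}_{X}M=\{\xi\in\MF{q}\mid[\xi,X]=0,\ B(\xi,X)=0\}$; the same description also follows at once from the decomposition of $(T^{\perp}_{X}M)^{\BS{C}}$ quoted above from \cite{MR0930601}.

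For the inclusion $\supseteq$: given $\MF{a}\in\MC{CS}_{X}$ and $\xi\in\MF{a}\ominus\BS{R}X$, the vector $\xi$ lies in $\MF{q}$, commutes with $X$ since $\MF{a}$ is abelian, and satisfies $B(\xi,X)=0$ by the definition of $\ominus$; hence $\xi\in T^{\perp}_{X}M$. It is also semisimple, since every element of a Cartan subspace is semisimple by definition. Therefore $\xi\in(T^{\perp}_{X}M)_{s}$.

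For the inclusion $\subseteq$: let $\xi\in(T^{\perp}_{X}M)_{s}$, so $\xi\in\MF{q}$ is semisimple with $[\xi,X]=0$ and $B(\xi,X)=0$, and recall $X$ is semisimple by Key Lemma. First I would note that $\BS{R}X+\BS{R}\xi$ is an abelian subspace of $\MF{q}$ consisting of semisimple elements: $\OPE{ad}(X)$ and $\OPE{ad}(\xi)$ are commuting operators on $\MF{g}^{\BS{C}}$, each diagonalizable over $\BS{C}$, hence simultaneously diagonalizable, so $\OPE{ad}(aX+b\xi)=a\,\OPE{ad}(X)+b\,\OPE{ad}(\xi)$ is diagonalizable, i.e.\ $aX+b\xi$ is semisimple, for all $a,b\in\BS{R}$. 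Then, among the abelian subspaces of $\MF{q}$ that consist of semisimple elements and contain $\BS{R}X+\BS{R}\xi$, I would pick one of maximal dimension, say $\MF{a}$. Any abelian subspace of $\MF{q}$ consisting of semisimple elements that contains $\MF{a}$ also contains $\BS{R}X+\BS{R}\xi$, hence equals $\MF{a}$ by maximality of the dimension; thus $\MF{a}$ is a Cartan subspace of $\MF{q}$, and since $X\in\MF{a}$ we have $\MF{a}\in\MC{CS}_{X}$. Finally $\xi\in\MF{a}$ together with $B(\xi,X)=0$ gives $\xi\in\MF{a}\ominus\BS{R}X$, completing the argument.

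The step I expect to be the main obstacle---essentially the only one that is not routine---is the passage from $\BS{R}X+\BS{R}\xi$ to a genuine Cartan subspace. In the pseudo-Riemannian setting a maximal abelian subspace of $\MF{q}$ need not consist of semisimple elements, so one cannot merely enlarge $\BS{R}X+\BS{R}\xi$ to an arbitrary maximal abelian subspace; one must stay \emph{inside} the class of abelian subspaces consisting of semisimple elements and check that a dimension-maximal enlargement within that class is automatically maximal among all such subspaces, i.e.\ a Cartan subspace. The remaining ingredients---the simultaneous diagonalization of commuting diagonalizable operators and the identification of $T^{\perp}_{X}M$---are standard.
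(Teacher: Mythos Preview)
Your proof is correct and follows essentially the same route as the paper's. The paper's argument is a one-liner: given a semisimple $\xi\in T^{\perp}_{X}M$, it notes $[\xi,X]=0$ and $B(\xi,X)=0$ and then simply asserts ``there exists a Cartan subspace $\mathfrak{a}$ of $\mathfrak{q}$ containing $\xi$ and $X$,'' leaving the justification implicit. You unpack exactly this step---showing $\mathbf{R}X+\mathbf{R}\xi$ consists of semisimple elements via simultaneous diagonalization and then enlarging it to a dimension-maximal abelian semisimple subspace---and also spell out the identification $T^{\perp}_{X}M=\{\xi\in\mathfrak{q}\mid[\xi,X]=0,\ B(\xi,X)=0\}$, which the paper uses without comment. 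So there is no genuine difference in strategy; you have simply made explicit what the paper treats as standard.
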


\begin{proof}
Let $\xi$ be a semisimple normal vector in $T^{\perp}_{X}M$.
Then we have $[\xi, X]=0$ and $B(\xi, X)=0$.
Therefore,
there exists a Cartan subspace $\MF{a}$ of $\MF{q}$
containing $\xi$ and $X$.
Moreover, we have $\xi \in \MF{a}\ominus\BS{R}X$.
Conversely,
if $\xi$ is in the left side of (\ref{eq.snormal}),
then there exists a Cartan subspace of $\MF{q}$ satisfying
$X \in \MF{a}$ and $\xi \in \MF{a}\ominus\BS{R}X$.
This implies that $\xi$ is semisimple and commutes with $X$.
Hence we have $\xi \in (T^{\perp}_{X}M)_{s}$.
\end{proof}

\noindent
Let $\MF{g}_{X}$ denote the centralizer of $X$ in $\MF{g}$.
By imitating the proof of Proposition 1.3.5.4
in \cite{MR0498999}
we have the following result.

\begin{lemma}\label{lem.gx}
There exists a Cartan involution of $\MF{g}$
satisfying $\theta \circ \sigma = \sigma \circ \theta$
and $\theta(\MF{g}_{X})=\MF{g}_{X}$.
\end{lemma}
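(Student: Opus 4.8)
Lemma \ref{lem.gx} asserts: there exists a Cartan involution $\theta$ of $\MF{g}$ with $\theta\sigma=\sigma\theta$ and $\theta(\MF{g}_X)=\MF{g}_X$, where $\MF{g}_X$ is the centralizer of the semisimple element $X\in\MF{q}$.
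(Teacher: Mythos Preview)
Your proposal is not a proof: it is only a restatement of the lemma. You have written down what is to be shown---that there is a Cartan involution $\theta$ of $\MF{g}$ commuting with $\sigma$ and preserving the centralizer $\MF{g}_X$---but you have not supplied a single step toward establishing it. There is no construction of $\theta$, no verification of either property, and no argument of any kind.

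What is missing is the entire content. The paper's proof proceeds as follows: first one fixes a Cartan subspace $\MF{a}\subset\MF{q}$ containing $X$ (which exists since $X$ is semisimple), and invokes the known result that there is a Cartan involution $\theta$ of $\MF{g}$ commuting with $\sigma$ and satisfying $\theta(\MF{a})=\MF{a}$. One then uses the restricted root space decomposition
\[
\MF{g}^{\BS{C}}_X=\MF{g}^{\BS{C}}_0+\sum_{\alpha\in R_X}\MF{g}^{\BS{C}}_\alpha,\qquad R_X=\{\alpha\in R\mid \alpha(X)=0\},
\]
and checks that $R_X$ is $\theta$-stable. The key point is that $\alpha$ takes purely imaginary values on $\MF{k}\cap\MF{a}$ and real values on $\MF{p}\cap\MF{a}$, so writing $X=X_1+X_2$ with $X_1\in\MF{k}\cap\MF{a}$ and $X_2\in\MF{p}\cap\MF{a}$, the condition $\alpha(X)=0$ forces $\alpha(X_1)=\alpha(X_2)=0$ separately; hence $(\theta\alpha)(X)=\alpha(X_1-X_2)=0$ as well. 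This shows $\theta(\MF{g}^{\BS{C}}_X)=\MF{g}^{\BS{C}}_X$, and therefore $\theta(\MF{g}_X)=\MF{g}_X$. You need to supply an argument along these lines (or an alternative one); as it stands, you have only restated the claim.
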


\begin{proof}
Let $\MF{a}$ be a Cartan subspace of $\MF{q}$ containing $X$.
Then there exists a Cartan involution of $\MF{g}$
satisfying $\theta \circ \sigma = \sigma \circ \theta$ and $\theta(\MF{a})=\MF{a}$ (cf.\ \cite[Lemma 5]{MR567427}).
If we put $\MF{k}=\OPE{Ker}(\theta - \OPE{id})$
and $\MF{p}=\OPE{Ker}(\theta + \OPE{id})$,
then $\MF{a}=\MF{k}\cap\MF{a}+\MF{p}\cap\MF{a}$ holds.
Denote by $R$ the restricted root system with respect to $\MF{a}$.
The restricted root space decomposition of $\MF{g}^{\BS{C}}$ gives
a decomposition
$\MF{g}^{\BS{C}}_{X}=\MF{g}^{\BS{C}}_{0}+\sum_{\alpha \in R_{X}}\MF{g}^{\BS{C}}_{\alpha}$
of $\MF{g}^{\BS{C}}_{X}$,
where $R_{X}:=\{\alpha \in R \mid \alpha(X)=0\}$.
Then we have $\MF{g}^{\BS{C}}_{0}$ is $\theta$-invariant.
If we write $X=X_{1}+X_{2}$ ($X_{1} \in \MF{k}\cap\MF{a}$,
$X_{2} \in \MF{p}\cap\MF{a}$),
then we have $\alpha(X_{i})=0$ ($i=1,2$)
for all $\alpha \in R_{X}$,
since $\alpha(\MF{k}\cap\MF{a}) \subset \sqrt{-1}\BS{R}$
and $\alpha(\MF{p}\cap\MF{a}) \subset \BS{R}$.
For any $\alpha \in R_{X}$,
we have $\theta\alpha(X)=\alpha(\theta(X))=\alpha(X_{1})-\alpha(X_{2})=0$.
This implies that $R_{X}$ is $\theta$-invariant.
Hence $\MF{g}^{\BS{C}}_{X}$ is $\theta$-invariant.
\end{proof}

\noindent
It follows from Lemma \ref{lem.gx}
that $\MF{g}_{X}$ is reductive in $\MF{g}$ (cf. \cite[Corollary 1.1.5.4]{MR0498999}).
On the other hand,
it is clear that $\MF{g}_{X}$ is $\sigma$-invariant.
Set $\MF{h}_{X}=\MF{h}\cap\MF{g}_{X}$
and $\MF{q}_{X}=\MF{q}\cap\MF{g}_{X}$.
Note that
any Cartan subspace of $\MF{q}$ containing $X$
is a Cartan subspace of $\MF{q}_{X}$
for a symmetric pair $(\MF{g}_{X}, \MF{h}_{X})$ (and vice versa).
Denote by $H_{X}$ the isotropy subgroup of $H$ at $X$.

\begin{lemma}
Let $\theta$ be a Cartan involution of $\MF{g}$
satisfying $\theta \circ \sigma = \sigma \circ \theta$
and $\theta(\MF{g}_{X})=\MF{g}_{X}$.
Then there exists a complete representatives
$\{\MF{a}_{1}, \ldots, \MF{a}_{m}\}$ for $\MC{CS}_{X}/H_{X}$
satisfying $\theta(\MF{a}_{i})=\MF{a}_{i}$
for $1 \leq i \leq m$.
\end{lemma}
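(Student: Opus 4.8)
The plan is to transfer the problem to the reductive symmetric pair $(\MF{g}_{X}, \MF{h}_{X})$ introduced above, and to apply there the structure theory of Cartan subspaces. By Lemma \ref{lem.gx} and \cite[Corollary 1.1.5.4]{MR0498999}, $\MF{g}_{X}$ is reductive in $\MF{g}$; it is $\sigma$-invariant, and $\theta$-invariant by hypothesis. Hence $\theta_{X} := \theta|_{\MF{g}_{X}}$ is a Cartan involution of $\MF{g}_{X}$ commuting with $\sigma_{X} := \sigma|_{\MF{g}_{X}}$; the $(-1)$-eigenspace of $\sigma_{X}$ is $\MF{q}_{X}$, and, as observed just above the statement, $\MC{CS}_{X}$ is precisely the set of Cartan subspaces of $\MF{q}_{X}$ for the pair $(\MF{g}_{X}, \MF{h}_{X})$, on which $H_{X}$ acts.

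Next I would quote the classical fact that $\MC{CS}_{X}/H_{X}$ is finite (cf.\ \cite{MR518716}, \cite{MR810638}) and fix a complete set of representatives $\MF{b}_{1}, \ldots, \MF{b}_{m}$. It then suffices to show that each $\MF{b}_{i}$ is $H_{X}$-conjugate to a $\theta$-stable Cartan subspace. To do this, apply \cite[Lemma 5]{MR567427} (in its reductive version) to $(\MF{g}_{X}, \MF{h}_{X})$ and $\MF{b}_{i}$ to obtain a Cartan involution $\theta_{i}$ of $\MF{g}_{X}$ with $\theta_{i}\sigma_{X} = \sigma_{X}\theta_{i}$ and $\theta_{i}(\MF{b}_{i}) = \MF{b}_{i}$. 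Since $\theta_{i}$ and $\theta_{X}$ are two $\sigma_{X}$-compatible Cartan involutions of $\MF{g}_{X}$, they are conjugate under $(H_{X})_{0}$ (the reductive analogue of the conjugacy statement underlying \cite[Lemma 5]{MR567427}): there is $h_{i} \in (H_{X})_{0}$ with $\theta_{i} = \OPE{Ad}(h_{i})\circ\theta_{X}\circ\OPE{Ad}(h_{i})^{-1}$. Put $\MF{a}_{i} := \OPE{Ad}(h_{i})^{-1}\MF{b}_{i}$. Since $h_{i}$ fixes $X$, $\MF{a}_{i} \in \MC{CS}_{X}$ and it lies in the $H_{X}$-orbit of $\MF{b}_{i}$; and one checks $\theta_{X}(\MF{a}_{i}) = \OPE{Ad}(h_{i})^{-1}\theta_{i}(\MF{b}_{i}) = \OPE{Ad}(h_{i})^{-1}\MF{b}_{i} = \MF{a}_{i}$. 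As $\MF{a}_{i} \subset \MF{q}_{X} \subset \MF{g}_{X}$ and $\theta_{X} = \theta|_{\MF{g}_{X}}$, this gives $\theta(\MF{a}_{i}) = \MF{a}_{i}$, so $\{\MF{a}_{1}, \ldots, \MF{a}_{m}\}$ is the desired complete set of $\theta$-stable representatives.

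The \emph{main obstacle}, and the reason the argument must be routed through $(\MF{g}_{X}, \MF{h}_{X})$ rather than $(\MF{g}, \MF{h})$, is that the conjugating elements are required to lie in $H_{X}$, i.e.\ to fix $X$; conjugacy of Cartan subspaces (or of compatible Cartan involutions) under the larger group $H$ would not suffice. Accordingly, the genuinely non-formal input is the transitivity of $(H_{X})_{0}$ on the set of $\sigma_{X}$-compatible Cartan involutions of the reductive algebra $\MF{g}_{X}$ — equivalently, the conjugacy of the relevant maximal compact subgroups of $H_{X}$ — which I expect to establish by adapting Berger's argument in \cite[Lemma 5]{MR567427} from the semisimple to the reductive setting (the centre of $\MF{g}_{X}$ being harmless). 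The remaining points are routine: that $\theta|_{\MF{g}_{X}}$ is a Cartan involution of $\MF{g}_{X}$ follows from $\theta$-stability and reductivity of $\MF{g}_{X}$, and the finiteness of $\MC{CS}_{X}/H_{X}$, needed only to phrase the conclusion with finitely many $\MF{a}_{i}$, is classical.
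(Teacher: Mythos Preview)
Your proposal is correct and proceeds along essentially the same lines as the paper: both reduce to the reductive symmetric pair $(\MF{g}_{X},\MF{h}_{X})$ and invoke the Oshima--Matsuki conjugacy result (your reference \cite{MR567427} is Oshima--Matsuki, not Berger) within $(H_{X})_{0}$. The only difference is one of packaging. The paper conjugates the Cartan subspace directly: it splits $\MF{g}_{X}=\MF{c}_{X}+\MF{s}_{X}$ into centre plus semisimple part, observes that the central piece $\MF{c}_{X}\cap\MF{a}_{i}=\MF{c}_{X}\cap\MF{q}$ is the same for every $i$ and already $\theta$-stable, and then applies the semisimple result from \cite{MR567427} to $\MF{s}_{X}\cap\MF{a}_{i}$, choosing the conjugating element to act trivially on $\MF{c}_{X}\cap\MF{q}$. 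You instead first manufacture a second Cartan involution $\theta_{i}$ adapted to $\MF{b}_{i}$ and then conjugate $\theta_{i}$ to $\theta_{X}$. Your route is slightly more indirect but has the advantage of making the role of the conjugacy of compatible Cartan involutions explicit; the paper's route is more hands-on and makes the ``centre is harmless'' step completely concrete, which is precisely the reductive adaptation you say you expect to carry out.
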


\begin{proof}
Since $\MF{g}_{X}$ is reductive,
we have $\MF{g}_{X}=\MF{c}_{X}+\MF{s}_{X}$,
where
$\MF{c}_{X}$ (resp.\ $\MF{s}_{X}$) denotes the center
(resp.\ the semisimple part) of $\MF{g}_{X}$.
Then $\MF{c}_{X}$ and $\MF{s}_{X}$ are
invariant under the actions of $\sigma$ and $\theta$,
and $\MF{a}_{i}=\MF{c}_{X}\cap\MF{a}_{i}+\MF{s}_{X}\cap\MF{a}_{i}$ holds
for $1 \leq i \leq m$.
Moreover,
for $1 \leq i \leq m$,
we have $\MF{c}_{X}\cap\MF{a}_{i}=\MF{c}_{X}\cap\MF{q}$
and $\MF{s}_{X} \cap \MF{a}_{i}$ is a Cartan subspace of $\MF{s}_{X}\cap\MF{q}$
for $(\MF{s}_{X}, \MF{s}_{X}\cap\MF{h}_{X})$.
Since $\theta$ gives a Cartan involution of $\MF{s}_{X}$,
there exists an $h_{i} \in (H_{X})_{0}$
such that $\OPE{Ad}(h_{i})(\MF{s}_{X}\cap\MF{a}_{i})$ is $\theta$-invariant (cf.\ \cite[Remark]{MR567427})
and $\OPE{Ad}(h_{i})Y=Y$ for all $Y \in \MF{c}_{X}\cap\MF{q}$.
This implies that $\OPE{Ad}(h_{i})\MF{a}_{i}$ is $\theta$-invariant
for $1 \leq i \leq m$.
This proves the assertion above.
\end{proof}

\begin{proposition}\label{prop.conj}
Let $\theta$ be a Cartan involution of $\MF{g}$
satisfying $\theta \circ \sigma = \sigma \circ \theta$
and $\theta(\MF{g}_{X})=\MF{g}_{X}$.
Let $\{\MF{a}_{1}, \ldots, \MF{a}_{l}\}$
be a $\theta$-invariant complete representatives
for $\MC{CS}_{X}/H_{X}$.
For any $\MF{a}_{i}$ and $\MF{a}_{j}\ (1 \leq i \neq j \leq m)$,
there exists an isomorphism
$\psi$ on $\MF{g}^{\BS{C}}$
satisfying 
$\psi \circ \sigma = \sigma \circ \psi$,
$\psi(\MF{a}^{\BS{C}}_{i}\ominus \BS{C}X)=
\MF{a}^{\BS{C}}_{j}\ominus \BS{C}X$ and
$\psi(X)=X$.
\end{proposition}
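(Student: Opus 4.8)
The plan is to transport the problem into the centralizer $\MF{g}_X$ of $X$ in $\MF{g}$. There the $\theta$-invariant representatives produced by the preceding lemma let us replace the conjugacy of real Cartan subspaces — which fails in the pseudo-Riemannian setting — by the classical conjugacy of maximal abelian subspaces for a compact symmetric pair, and the resulting inner automorphism of $\MF{g}_X^{\BS{C}}$ is then lifted to $\MF{g}^{\BS{C}}$. As a first reduction, it suffices to produce an automorphism $\psi$ of $\MF{g}^{\BS{C}}$ with $\psi\circ\sigma=\sigma\circ\psi$, $\psi(X)=X$ and $\psi(\MF{a}_i^{\BS{C}})=\MF{a}_j^{\BS{C}}$: such a $\psi$ preserves the complex bilinear extension of the Killing form $B$, hence carries the $B$-orthogonal complement $\MF{a}_i^{\BS{C}}\ominus\BS{C}X$ onto $\MF{a}_j^{\BS{C}}\ominus\BS{C}X$, where one uses $B(X,X)\neq0$, valid since $M\subset\BS{S}$.

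Next I would work inside $\MF{g}_X$. By Lemma~\ref{lem.gx}, $\MF{g}_X$ is $\theta$-invariant; writing $\MF{g}_X=\MF{k}_X\oplus\MF{p}_X$ for the induced Cartan decomposition, set $\MF{u}_X:=\MF{k}_X\oplus\sqrt{-1}\,\MF{p}_X$, a compact real form of $\MF{g}_X^{\BS{C}}$. Since $\sigma$ commutes with $\theta$, $\MF{u}_X$ is $\sigma$-stable, so $(\MF{u}_X,\MF{u}_X\cap\MF{h}^{\BS{C}})$ is a compact symmetric pair whose $(-1)$-eigenspace is $\MF{u}_X\cap\MF{q}^{\BS{C}}$. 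Because each $\MF{a}_i$ is $\theta$-stable, the subspace $\MF{b}_i:=(\MF{k}\cap\MF{a}_i)\oplus\sqrt{-1}(\MF{p}\cap\MF{a}_i)$ is contained in $\MF{u}_X\cap\MF{q}^{\BS{C}}$ and satisfies $\MF{b}_i^{\BS{C}}=\MF{a}_i^{\BS{C}}$; moreover $\MF{b}_i$ is maximal abelian in $\MF{u}_X\cap\MF{q}^{\BS{C}}$, which (upon complexifying) reduces to the fact that $\MF{a}_i$, being a Cartan subspace of $\MF{q}_X$, is its own centralizer in $\MF{q}_X$. By the conjugacy theorem for maximal abelian subspaces in the isotropy representation of a compact symmetric pair (cf.\ \cite[Lemma~6.3, Chapter~V]{MR1834454}) there is an element $Z\in\MF{u}_X\cap\MF{h}^{\BS{C}}$ such that $g:=\exp Z$ satisfies $\OPE{Ad}(g)\MF{b}_i=\MF{b}_j$; complexifying, $\OPE{Ad}(g)\MF{a}_i^{\BS{C}}=\MF{a}_j^{\BS{C}}$. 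Note $Z\in\MF{u}_X\cap\MF{h}^{\BS{C}}\subset\MF{g}_X^{\BS{C}}\cap\MF{h}^{\BS{C}}$.

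It remains to lift and check. Put $\psi:=\exp(\OPE{ad}_{\MF{g}^{\BS{C}}}Z)\in\OPE{Aut}(\MF{g}^{\BS{C}})$. Since $Z$ lies in the subalgebra $\MF{g}_X^{\BS{C}}$, the derivation $\OPE{ad}_{\MF{g}^{\BS{C}}}(Z)$ preserves $\MF{g}_X^{\BS{C}}$ and restricts there to $\OPE{ad}_{\MF{g}_X^{\BS{C}}}(Z)$; hence $\psi|_{\MF{g}_X^{\BS{C}}}=\OPE{Ad}(g)$, and in particular $\psi(\MF{a}_i^{\BS{C}})=\MF{a}_j^{\BS{C}}$. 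As $Z\in\MF{g}_X^{\BS{C}}$ centralizes $X$, we get $\psi(X)=X$. As $Z\in\MF{h}^{\BS{C}}$ is fixed by $\sigma$, we have $\sigma\circ\exp(\OPE{ad}Z)\circ\sigma=\exp(\OPE{ad}\,\sigma(Z))=\exp(\OPE{ad}Z)$, so $\psi\circ\sigma=\sigma\circ\psi$. By the reduction of the first paragraph, this $\psi$ has all the required properties.

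The main obstacle — and the actual content of the statement — is the passage in the second paragraph: real Cartan subspaces need not be $\OPE{Ad}(H_X)$-conjugate in the pseudo-Riemannian case, and what rescues the argument is moving to the compact real form $\MF{u}_X$ built from the common Cartan involution $\theta$, which is precisely why the preceding lemma was arranged to produce $\theta$-invariant representatives. The remaining points — that $\MF{b}_i$ is genuinely maximal abelian in $\MF{u}_X\cap\MF{q}^{\BS{C}}$ (equivalently, that a Cartan subspace of a real symmetric pair stays maximal abelian among semisimple elements after complexification), and that $\MF{u}_X\cap\MF{h}^{\BS{C}}$ is the Lie algebra of the relevant fixed subgroup — are routine but should be spelled out.
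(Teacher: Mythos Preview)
Your proof is correct and follows essentially the same route as the paper. The only cosmetic difference is that you pass to the compact form $\MF{u}_X=\MF{k}_X\oplus\sqrt{-1}\,\MF{p}_X$ while the paper passes to the Riemannian dual $\MF{g}^{d}_{X}=\MF{k}_X\cap\MF{h}_X+\sqrt{-1}(\MF{p}_X\cap\MF{h}_X)+\sqrt{-1}(\MF{k}_X\cap\MF{q}_X)+\MF{p}_X\cap\MF{q}_X$, on which $\sigma$ is a Cartan involution; but your $\MF{u}_X\cap\MF{q}^{\BS{C}}$ and $\MF{b}_i$ are just $\sqrt{-1}$ times the paper's $\MF{p}^{d}_X$ and $\MF{a}^{d}_i$, and the subalgebra $\MF{u}_X\cap\MF{h}^{\BS{C}}=\MF{k}_X\cap\MF{h}_X+\sqrt{-1}(\MF{p}_X\cap\MF{h}_X)$ in which your $Z$ lives is literally the paper's $\MF{k}^{d}_X$, so the conjugating automorphism $\psi$ is the same in both arguments.
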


\begin{proof}
Set $\MF{k}=\OPE{Ker}(\theta - \OPE{id})$,
$\MF{p}=\OPE{Ker}(\theta + \OPE{id})$,
$\MF{k}_{X}=\MF{k} \cap \MF{g}_{X}$ and
$\MF{p}_{X}=\MF{p} \cap \MF{g}_{X}$.
Then we have
$\MF{a}_{i} = \MF{k}\cap\MF{a}_{i}+\MF{p}\cap\MF{a}_{i}$,
$\MF{a}_{j} = \MF{k}\cap\MF{a}_{j}+\MF{p}\cap\MF{a}_{j}$,
and the
simultaneous decomposition
$\MF{g}_{X} = \MF{k}_{X} \cap \MF{h}_{X} + \MF{p}_{X} \cap \MF{h}_{X}
+\MF{k}_{X} \cap \MF{q}_{X}+\MF{p}_{X} \cap \MF{q}_{X}$
of $\MF{g}_{X}$ for $\sigma$ and $\theta$,
Set $\MF{g}^{d}_{X} = \MF{k}_{X} \cap \MF{h}_{X} + \sqrt{-1}(\MF{p}_{X} \cap \MF{h}_{X})
+\sqrt{-1}(\MF{k}_{X} \cap \MF{q}_{X})+\MF{p}_{X} \cap \MF{q}_{X} (\subset \MF{g}^{\BS{C}}_{X})$.
Then $\sigma$ gives a Cartan involution of $\MF{g}^{d}_{X}$,
so that $\MF{g}^{d}_{X}=\MF{k}^{d}_{X}+\MF{p}^{d}_{X}$
is the Cartan decomposition for $\sigma$,
where $\MF{k}^{d}_{X}:=\MF{k}_{X} \cap \MF{h}_{X} + \sqrt{-1}(\MF{p}_{X} \cap \MF{h}_{X})$
and $\MF{p}^{d}_{X}:=\sqrt{-1}(\MF{k}_{X} \cap \MF{q}_{X})+\MF{p}_{X} \cap \MF{q}_{X}$.
By the maximality of $\MF{a}_{i}$ (resp.\ $\MF{a}_{j}$) in $\MF{q}_{X}$
$\MF{a}^{d}_{i}:=\sqrt{-1}(\MF{k}\cap\MF{a}_{i})+\MF{p}\cap\MF{a_{i}}$
(resp.\ $\MF{a}^{d}_{j}:=\sqrt{-1}(\MF{k}\cap\MF{a}_{j})+\MF{p}\cap\MF{a}_{j}$)
is a maximal abelian subspace of $\MF{p}^{d}_{X}$.
This implies that
there exists some $Z_{1}, \ldots, Z_{k} \in \MF{k}^{d}_{X}$
satisfying $\MF{a}^{d}_{j}=e^{\OPE{ad}(Z_{1})}\cdots e^{\OPE{ad}(Z_{k})} \MF{a}^{d}_{i}$.
If we put $\psi=e^{\OPE{ad}(Z_{1})}\cdots e^{\OPE{ad}(Z_{k})}$,
then we have $\psi \circ \sigma = \sigma \circ \psi$ and $\psi(X)=X$.
Hence $\psi(\MF{a}^{\BS{C}}_{i}\ominus \BS{C}X)=
\MF{a}^{\BS{C}}_{j}\ominus \BS{C}X$ holds.
\end{proof}

By imitating the argument
in pp.\ 459--460, \cite{MR2532897}
we have the following result.

\begin{lemma}\label{lem.v}
Let $V$ be a vector space over $\BS{R}$
and $B$ be a nondegenerate bilinear form on $V$.
For any finite subset $\MC{A} \subset V^{\BS{C}}$,
the set $\{B(a, v) \mid a \in \MC{A}\}$ ($=:\MC{A}(v)\subset \BS{C}$)
is invariant by multiplication of $-1$ for all $v \in V$
if and only if $\MC{A}$ is invariant by multiplication of $-1$.
\end{lemma}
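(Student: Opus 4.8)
The statement to prove is Lemma \ref{lem.v}: for a finite subset $\MC{A}\subset V^{\BS{C}}$, the set $\MC{A}(v)=\{B(a,v)\mid a\in\MC{A}\}$ (counted with multiplicity) is invariant under multiplication by $-1$ for every $v\in V$ if and only if $\MC{A}$ itself is invariant under multiplication by $-1$.

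The plan is as follows.

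The ``if'' direction is immediate: if $\MC{A}=-\MC{A}$ as a multiset, then for each $v\in V$ the map $a\mapsto B(a,v)$ is a bijection from $\MC{A}$ to $-\MC{A}$, and since $B(-a,v)=-B(a,v)$ by bilinearity (extended $\BS{C}$-linearly in the first slot), the multiset $\MC{A}(v)$ equals $-\MC{A}(v)$.

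For the ``only if'' direction I would argue by counting, exploiting that the hypothesis holds for \emph{all} $v\in V$ simultaneously. First extend $B$ to a $\BS{C}$-bilinear form on $V^{\BS{C}}$; since $B$ is nondegenerate on $V$, the induced pairing $V^{\BS{C}}\times V\to\BS{C}$ separates points of $V^{\BS{C}}$, i.e.\ if $B(a,v)=B(a',v)$ for all $v\in V$ then $a=a'$. Now fix a multiset $\MC{A}=\{a_1,\dots,a_N\}$. The hypothesis says that for each $v\in V$ there is a permutation $\pi_v$ of $\{1,\dots,N\}$ with $B(a_{\pi_v(k)},v)=-B(a_k,v)$ for all $k$. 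Since there are only finitely many permutations of $N$ elements, by a pigeonhole argument there is a single permutation $\pi$ and a Zariski-dense (indeed, a set whose real-linear span is all of $V$) subset $V'\subset V$ on which $\pi_v=\pi$; more carefully, for each permutation $\pi$ the set $V_\pi=\{v\in V\mid B(a_{\pi(k)},v)=-B(a_k,v)\ \forall k\}$ is a linear subspace of $V$ (it is cut out by the linear conditions $B(a_{\pi(k)}+a_k,v)=0$), and the hypothesis gives $V=\bigcup_\pi V_\pi$, a finite union of subspaces, so some $V_\pi$ equals all of $V$. For that $\pi$ we get $B(a_{\pi(k)}+a_k,v)=0$ for all $v\in V$ and all $k$, hence $a_{\pi(k)}=-a_k$ for every $k$ by the separation property above. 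Since $\pi$ is a permutation, this exhibits $\MC{A}$ as invariant under multiplication by $-1$ as a multiset (the orbits of $\pi$ organize the $a_k$ into $\pm$-pairs, with fixed points forced to be $0$, whose multiplicity is automatically even-compatible with the symmetry).

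The only subtlety, and the step I expect to require the most care, is the bookkeeping with multiplicities: one must phrase ``invariant under multiplication by $-1$'' as an equality of multisets and make sure the permutation $\pi$ obtained is genuinely a permutation of the index set (so that multiplicities match), rather than merely a set-theoretic equality $\MC{A}(v)=-\MC{A}(v)$. The key facts that make the argument work are (i) nondegeneracy of $B$, which upgrades ``equal for all $v$'' to ``equal as elements of $V^{\BS{C}}$'', and (ii) the finite-union-of-subspaces observation, which lets us pass from a possibly $v$-dependent permutation to a uniform one valid on all of $V$ over the infinite field $\BS{R}$.
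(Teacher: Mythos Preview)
Your proof is correct and uses the same key idea as the paper's: write $V$ as a finite union of linear subspaces (so one of them must be all of $V$) and then invoke nondegeneracy of $B$ to pass from equality of pairings against all $v\in V$ to equality in $V^{\BS{C}}$. The paper's version is slightly leaner in that it fixes one $a\in\MC{A}$ at a time and writes $V=\bigcup_{b\in\MC{A}}\{v\in V\mid B(a+b,v)=0\}$ to locate a single $b_0=-a\in\MC{A}$, which suffices for the set-level statement of the lemma, whereas your global-permutation variant additionally keeps track of multiplicities.
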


\begin{proof}
Suppose that
$\MC{A}(v)$ is invariant
by multiplication of $-1$ for all $v \in V$.
Take an $a \in \MC{A}$.
Then we have
\begin{equation*}
V = \bigcup_{b \in \MC{A}}\{v \in V \mid B(a,v)=-B(b,v)\}.
\end{equation*}
Since,
for each $b \in \MC{A}$,
$\{v \in V \mid B(a,v)=-B(b,v)\}$
is a subspace of $V$ and $\#A$ is finite,
there exists a $b_{0} \in \MC{A}$
with $B(a,v)=-B(b_{0},v)$ for all $v \in V$.
Then, for any $v=v_{1}+\sqrt{-1}v_{2} \in V^{\BS{C}}$ ($v_{1}, v_{2} \in V$)
we have $B(a, v)= B(a, v_{1})+\sqrt{-1}B(a,v_{2})
=-B(b_{0},v_{1})-\sqrt{-1}B(b_{0},v_{2})=-B(b_{0},v)$.
Since $B$ is nondegenerate on $V^{\BS{C}}$,
we have $-a =b_{0} \in \MC{A}$.
The converse is clear.
\end{proof}

\begin{proof}[\textsc{Proof of Proposition} $\ref{prop.cri}$]
It follows from Propositions \ref{prop.jc}, \ref{prop.conj}
and Lemma \ref{lem.ns}
that $M$ is austere
if and only if, for a Cartan subspace $\MF{a}$ and
all $\xi \in (\MF{a}\ominus \BS{R}X)$,
$\OPE{Spec}A^{\BS{C}}_{\xi}$
is invariant (considering multiplicities)
under the multiplication by $-1$.
By using the equation (\ref{eq.speca}) in Lemma \ref{lem.semi}
we have
\begin{equation*}
\OPE{Spec}A^{\BS{C}}_{\xi}
=\left\{B\left(-\dfrac{p_{X}(\alpha)}{\alpha(X)},\xi\right) \mid \alpha \in R_{+} \text{ with }\alpha(X) \neq 0\right\}
\end{equation*}
for all $\xi \in (\MF{a}\ominus \BS{R}X)$.
By applying Lemma \ref{lem.v} for 
$\MF{a}\ominus\BS{R}$ and
$\{(-1/\alpha(X))p_{X}(\alpha) \mid \alpha \in R_{+} \text{ with }\alpha(X) \neq 0\}$
($(\MF{a}\ominus\BS{R})^{\BS{C}}$)
we complete the proof.
\end{proof}

\begin{corollary}\label{cor.real}
The orbit through any real restricted root vector
is an austere submanifold in $\BS{S}$.
\end{corollary}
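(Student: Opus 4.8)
The plan is to feed the point $X := A_{\alpha}$ (the restricted root vector of the real restricted root $\alpha$) into the criterion of Proposition \ref{prop.cri}. First I would verify that the criterion applies. Since $\alpha$ is real, $A_{\alpha}$ lies in $\MF{p}\cap\MF{a}$; the Killing form $B$ is positive definite on $\MF{p}$, so $B(X,X)=\alpha(A_{\alpha})>0$, and hence the $\OPE{Ad}(H)$-orbit $M$ through $X$ is contained in the pseudo-hypersphere $\BS{S}:=\{v\in\MF{q}\mid B(v,v)=B(X,X)\}$. As $X$ lies in the Cartan subspace $\MF{a}$ it is semisimple, so the Key Lemma shows $M\hookrightarrow\BS{S}$ is a pseudo-Riemannian submanifold and Proposition \ref{prop.cri} is available. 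Because the multiset $\{(-1/\beta(X))\,p_{X}(A_{\beta})\mid \beta\in R_{+},\ \beta(X)\neq 0\}$ (each slot $\beta$ weighted by $\dim_{\BS{C}}\MF{q}^{\BS{C}}_{\beta}$) is unchanged when $\beta$ is replaced by $-\beta$, it does not depend on the choice of positive system, so I would pick $R_{+}$ with $\alpha\in R_{+}$. It then remains to prove this multiset is invariant under multiplication by $-1$.

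To produce the symmetry I would use the reflection $s_{\alpha}$ of $R$ in $\alpha$. For $\beta\in R_{+}$ with $\beta(X)\neq 0$, let $\beta^{*}$ be the unique element of $\{s_{\alpha}\beta,\,-s_{\alpha}\beta\}$ lying in $R_{+}$; since $(s_{\alpha}\beta)(X)=-\beta(X)$ we get $\beta^{*}(X)\neq 0$, and $\beta\mapsto\beta^{*}$ is an involution of $\{\beta\in R_{+}\mid\beta(X)\neq 0\}$. Now $p_{X}$ annihilates $X=A_{\alpha}$ and $A_{s_{\alpha}\beta}-A_{\beta}$ is a scalar multiple of $A_{\alpha}$, so $p_{X}(A_{\beta^{*}})=\pm p_{X}(A_{\beta})$ with the sign matching $\beta^{*}=\pm s_{\alpha}\beta$; together with $\beta^{*}(X)=\mp\beta(X)$ this gives $(-1/\beta^{*}(X))\,p_{X}(A_{\beta^{*}})=-(-1/\beta(X))\,p_{X}(A_{\beta})$. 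Thus the involution $\beta\mapsto\beta^{*}$ sends each entry of the multiset to its negative (the entry of $\alpha$ itself being $p_{X}(A_{\alpha})=0$, so no issue there). Consequently, \emph{provided} the involution also preserves multiplicities, i.e.\ $\dim_{\BS{C}}\MF{q}^{\BS{C}}_{\beta^{*}}=\dim_{\BS{C}}\MF{q}^{\BS{C}}_{\beta}$, Proposition \ref{prop.cri} yields that $M$ is austere in $\BS{S}$.

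The multiplicity identity is the heart of the matter and the step where reality of $\alpha$ is genuinely used, beyond the fact that it is precisely for real $\alpha$ that $A_{\alpha}\in\MF{q}$ (so that the statement makes sense at all). Since $\MF{q}^{\BS{C}}_{-\gamma}=\MF{q}^{\BS{C}}_{\gamma}$ for every $\gamma$, it is enough to know that $s_{\alpha}$ preserves the dimensions of restricted root spaces, and for this I would invoke the fact that, for a \emph{real} restricted root $\alpha$, the reflection $s_{\alpha}$ of $\MF{a}$ is realized by $\OPE{Ad}(h)|_{\MF{a}}$ for some $h\in H$ normalizing $\MF{a}$; then $\OPE{Ad}(h)$ carries $\MF{q}^{\BS{C}}_{\beta}$ isomorphically onto $\MF{q}^{\BS{C}}_{s_{\alpha}\beta}$, so they have equal dimension. (Cf.\ \cite{MR567427} for the structure of the restricted Weyl group $N_{H}(\MF{a})/Z_{H}(\MF{a})$; this is the pseudo-Riemannian substitute for the Weyl-group invariance of multiplicities that is automatic in the Riemannian case of \cite{MR2532897} and that fails for non-real roots.) I expect this point — showing that for a general semisimple symmetric pair the reflection in a real root lies in $N_{H}(\MF{a})/Z_{H}(\MF{a})$ — to be the only real obstacle; the two preceding paragraphs are routine once the criterion of Proposition \ref{prop.cri} is in hand.
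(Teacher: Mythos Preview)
Your approach is the paper's: set $X=A_\alpha$, check $B(X,X)>0$ so that Proposition~\ref{prop.cri} applies, then use the reflection $s_\alpha$ to pair each $\beta\in R_+$ with $\beta^*\in\{\pm s_\alpha\beta\}\cap R_+$ and observe that the corresponding multiset entries are negatives of one another. The paper's proof is exactly this, compressed to two lines.

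One comment on your final paragraph. You flag multiplicity invariance under $s_\alpha$ as ``the only real obstacle'' and propose realizing $s_\alpha$ inside $N_H(\MF{a})/Z_H(\MF{a})$, for which you say reality of $\alpha$ is essential. That route works, but it overstates the difficulty and mislocates the role of reality. One has $\dim_{\BS{C}}\MF{q}^{\BS{C}}_\gamma=\dim_{\BS{C}}\MF{g}^{\BS{C}}_\gamma$ for every $\gamma\in R$, and $R$ is simultaneously the restricted root system of the \emph{Riemannian} dual pair $(\MF{g}^d,\MF{k}^d)$ with respect to $\MF{a}_{\BS{R}}\subset\MF{p}^d$ (cf.\ Appendix~\ref{sec.appendix}); hence the full Weyl group $W(R)$ preserves all multiplicities by the standard Riemannian argument, with no hypothesis on $\alpha$. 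The reality of $\alpha$ is used only to place $A_\alpha$ in $\MF{a}\subset\MF{q}$ with $B(A_\alpha,A_\alpha)>0$, and the paper's proof accordingly does not mention multiplicities at all.
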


\begin{proof}
Let $\alpha$ be a real restricted root.
Then the restricted root vector $A_{\alpha}$ is in $\MF{q}$ and $B(A_{\alpha}, A_{\alpha})>0$.
If we put $X=A_{\alpha}$,
then
$\{(-1/\beta(X))p_{X}(\beta) \mid \beta \in R_{+} \text{ with }\beta(X) \neq 0\}$
($=:\MC{A}$)
is invariant $($considering multiplicities$)$ under the multiplication by $-1$.
Indeed,
for any $v = (-1/\beta(X))p_{X}(\beta) \in \MC{A}$,
we have
$s_{\alpha}(\beta)(X)\neq 0$ and
$-v=(-1/s_{\alpha}(\beta)(X))p_{X}(s_{\alpha}(\beta)) \in \MC{A}$.
\end{proof}

\begin{remark}
Ikawa-Sakai-Tasaki proved Corollary \ref{cor.real} 
in the case when $G/H$ is a Riemannian symmetric space (cf.\ \cite[Proposition 4.4]{MR2532897}).
In fact, they classified austere orbits (cf.\ \cite[Theorem 5.1]{MR2532897}).
\end{remark}

In the sequel,
we give all the real restricted roots in the restricted root system
with respect to a MSCS.
Let $\theta$ be a Cartan involution of $\MF{g}$ commuting with $\sigma$ (cf.\ \cite[Theorem 2.1, Chapter IV]{MR0239005}).
Denote by $R$ the restricted root system with respect to a $\theta$-invariant MSCS $\MF{a}$.
As we mentioned in Section \ref{sec.pre},
a restricted root $\alpha$ is real if and only if $\theta(\alpha)=-\alpha$.
On the other hand,
we can determine the action of $\theta$ on $R$
in terms of the Satake diagram associated with $(R, \theta)$.
Then we have the following result.

\begin{theorem}\label{thm.realroots}
All the real restricted roots in the restricted root system
with respect to a MSCS for all semisimple pseudo-Riemannian symmetric spaces are as in Table \ref{table.realroots}.
\end{theorem}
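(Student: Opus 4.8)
The plan is to reduce the statement entirely to a case-by-case verification using Satake diagrams, exploiting the criterion established just before the theorem: for a $\theta$-invariant maximally split Cartan subspace $\MF{a}$, a restricted root $\alpha \in R$ is real if and only if $\theta(\alpha) = -\alpha$. First I would recall, as set up in Section \ref{sec.pre}, that the action of $-\theta$ on the fundamental system $\varPsi(R)$ is encoded by the Satake involution $p$ of $\varPsi(R)\setminus\varPsi(R_0)$ together with the fact that $(-\theta)$ fixes $\varPsi(R_0)$ pointwise (each black node) and that $(-\theta)(\alpha) \equiv p\alpha \pmod{\OPE{Span}_{\BS{Z}}\varPsi(R_0)}$ for white nodes. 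Extending by $\BS{Z}$-linearity, this determines $\theta$ on all of $R$: writing $\beta = \sum_i n_i \alpha_i$ one computes $\theta(\beta)$ from the diagram, and then $\beta$ is real precisely when $\theta(\beta) = -\beta$, i.e.\ when $-\theta$ sends $\beta$ to itself.

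Next I would organize the computation by the type of $(R,\theta)$, using the Satake diagrams recorded in Table \ref{table.satakelistclassical} (classical types) and the corresponding list for exceptional types, where for each irreducible semisimple pseudo-Riemannian symmetric space the rank $r$, the split rank $l$, and the diagram are determined in Appendix \ref{sec.appendix}. For a diagram with no black nodes and trivial arrows (the split cases: AI, EI, EV, EVIII, FI, G), $-\theta$ acts as the identity on $R$, so every restricted root is real — this gives the "all restricted roots" entries. For the remaining diagrams, the condition $(-\theta)\beta = \beta$ is a finite linear condition on the coefficient vector $(n_1,\dots,n_r)$: I would solve it by tracking how $-\theta$ permutes the white nodes (via $p$) and adds compensating combinations of black-node roots, then intersect the resulting fixed sublattice with $R$ and list the roots explicitly in terms of the $\alpha_i$. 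Because of the $\pm$ symmetry of root systems it suffices to list positive representatives, which is what Table \ref{table.realroots} does; the parameter $1\le i\le l$ (and $1 \le i < j \le l$) ranges exactly over the choices compatible with the black-node structure, since the split rank $l$ counts the white nodes not interchanged in an essential way.

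The main obstacle is purely bookkeeping rather than conceptual: for the large exceptional diagrams (EVI, EVII, EIX, FII, FIII, EII, EIII, BCIII, CIII, DIII, AIII, etc.) one must correctly read off the Satake involution and the black-node span from the diagram and then enumerate all roots $\beta \in R$ with $(-\theta)\beta = \beta$, which for the $E$-series involves sifting through a root system of order up to $240$. I would carry this out type by type, in each case (i) writing the highest root and the full positive system in the $\alpha_i$ basis, (ii) applying the explicit formula for $-\theta$ obtained from Lemma \ref{lem.satakeinv}, and (iii) collecting the fixed roots; cross-checking the count against $\dim(\MF{p}\cap\MF{q})$ and against known real forms provides a sanity check. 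Combining Corollary \ref{cor.real} with the resulting list of real restricted roots then yields the Theorem stated in the Introduction: for each $\alpha$ in Table \ref{table.realroots}, the $\OPE{Ad}(H)$-orbit through $A_\alpha$ is austere in $\BS{S}$.
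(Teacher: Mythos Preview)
Your proposal is correct and follows essentially the same approach as the paper: the paper's proof consists of Lemmas \ref{realAI}--\ref{realanti}, each of which treats one Satake type of $(R,\theta)$, reads off the Satake involution $p$ and $\varPsi(R_0)$ from the diagram, determines $\tilde\theta=-\theta$ on the fundamental roots (using that $\tilde\theta$ preserves $R$ to pin down the $\varPsi(R_0)$-corrections), and then solves $\alpha^{\tilde\theta}=\alpha$ explicitly by inspecting the coefficient vectors. Note that the paper also includes the split case DI$(\RANK=\SRANK)$ alongside AI, EI, EV, EVIII, FI, G in the ``all roots real'' lemma, and treats separately the product types (A$+$A, B$+$B, \ldots) and the types AII, EIV where there are no real roots; you should make sure your case list covers these as well.
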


The proof of Theorem \ref{thm.realroots}
is given by Lemmas \ref{realAI}--\ref{realanti} as shown in the following.
Set $\tilde{\theta}=-\theta$ and $\alpha^{\tilde{\theta}}=-\theta(\alpha)$.
\begin{lemma}\label{realAI}\label{realDI-1}\label{EI}\label{EV}\label{EVIII}\label{FI}\label{G}
In the case where $(R, \theta)$ is of type
AI, DI$(\RANK=\SRANK)$,
EI, EV, EVIII, FI, or G,
all restricted roots are real.
\end{lemma}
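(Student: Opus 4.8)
The plan is to use the criterion from Section~\ref{sec.pre}: a restricted root $\alpha \in R$ is real if and only if $\tilde\theta(\alpha) = -\theta(\alpha) = \alpha$, i.e.\ $\alpha^{\tilde\theta} = \alpha$, where we read off the action of $-\theta$ on $R$ from the Satake diagram associated with $(R,\theta)$ when $\MF{a}$ is maximally split. The types listed in the lemma---AI, DI with $\RANK = \SRANK$, EI, EV, EVIII, FI, G---are precisely the ones whose Satake diagram (as determined in Table~\ref{table.satakelistclassical}) has no black vertices and no arrows: every fundamental root is a white circle fixed by the Satake involution $p$. Equivalently, $\varPsi(R_{0}) = \emptyset$ and $p = \operatorname{id}$ on $\varPsi(R)$. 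The claim is then that $\tilde\theta = \operatorname{id}$ on all of $R$.

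First I would recall from Lemma~\ref{lem.satakeinv} and the construction of the Satake diagram that, when $\varPsi(R_{0}) = \emptyset$ and $p = \operatorname{id}$, one has $(-\theta)(\alpha) = \alpha^{\tilde\theta} \equiv p\alpha = \alpha \pmod{\operatorname{Span}_{\BS{Z}}\varPsi(R_{0})}$, and since $\operatorname{Span}_{\BS{Z}}\varPsi(R_{0}) = \{0\}$, this gives $\alpha^{\tilde\theta} = \alpha$ for every $\alpha \in \varPsi(R)$. Next, because $\tilde\theta = -\theta$ is a linear automorphism of $(\MF{a}_{\BS{R}})^{*}$ preserving $R$, and $\varPsi(R)$ is a basis of the span of $R$, fixing every fundamental root forces $\tilde\theta = \operatorname{id}$ on $\operatorname{Span}_{\BS{R}}R$, hence $\alpha^{\tilde\theta} = \alpha$ for all $\alpha \in R$. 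Therefore $\theta(\alpha) = -\alpha$ for every $\alpha \in R$, which by the criterion in Section~\ref{sec.pre} means every restricted root is real. Combined with Corollary~\ref{cor.real}, this also re-proves that every orbit through a restricted root vector is austere in these cases, consistent with the "all restricted roots" entries in Table~\ref{table.realroots}.

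The only real content, and the main obstacle, is verifying that the Satake diagrams for these particular types indeed have all vertices white and no arrows---that is, that $\varPsi(R_{0}) = \emptyset$ and $p = \operatorname{id}$. This is not a computation internal to this section: it is exactly the output of the case-by-case determination of the Satake diagrams associated with maximally split Cartan subspaces carried out in Table~\ref{table.satakelistclassical} (with the recipe of Appendix~\ref{sec.appendix}). So the proof of this lemma is a one-line invocation of that table together with the elementary linear-algebra observation above; I would present it as: "By Table~\ref{table.satakelistclassical}, for each of the types AI, DI$(\RANK = \SRANK)$, EI, EV, EVIII, FI, G the Satake diagram associated with $(R,\theta)$ has no black vertices and no arrows, so $\varPsi(R_{0}) = \emptyset$ and the Satake involution is trivial; hence by Lemma~\ref{lem.satakeinv}, $\alpha^{\tilde\theta} = \alpha$ for all $\alpha \in \varPsi(R)$, and since $\tilde\theta$ is linear it fixes all of $R$, so every restricted root is real." The remaining lemmas \ref{realDI-1}--\ref{realanti} then handle the types whose diagrams do have black vertices or arrows, where one must actually track how $-\theta$ permutes the roots modulo $\operatorname{Span}_{\BS{Z}}\varPsi(R_{0})$.
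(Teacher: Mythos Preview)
Your proposal is correct and follows essentially the same argument as the paper's proof, which reads in its entirety: ``From the Satake diagram of $(R, \theta)$ the Satake involution is trivial and $\varPsi(R_{0})=\emptyset$. This implies that $\alpha^{\tilde{\theta}} = \alpha$ for all $\alpha \in \varPsi(R)$. Therefore all restricted roots are real.'' Your write-up simply unpacks this with more care, making explicit the invocation of Lemma~\ref{lem.satakeinv} and the linearity step extending from $\varPsi(R)$ to all of $R$.
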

\begin{proof}
From the Satake diagram of $(R, \theta)$
the Satake involution is trivial and $\varPsi(R_{0})=\emptyset$.
This implies that $\alpha^{\tilde{\theta}} = \alpha$
for all $\alpha \in \varPsi(R)$.
Therefore all restricted roots are real.
\end{proof}
In the sequel,
for each root $\alpha \in R$,
we give the form $\alpha = \sum n_{i}\alpha_{i}$,
where the $\alpha_{i}$'s are fundamental roots as in Table \ref{table.satakelistclassical}
and
the $n_{i}$'s are integers which are either all positive or all negative.
\begin{lemma}\label{realAII}
In the case where $(R, \theta)$ is of type AII,
there exists no real restricted root.
\end{lemma}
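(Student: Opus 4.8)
The plan is to reduce the assertion to a statement about the action of $\tilde{\theta}$ on $R$ and then to read that action off from the type-AII Satake diagram in Table $\ref{table.satakelistclassical}$. As recalled in Section $\ref{sec.pre}$, since $\MF{a}$ is a $\theta$-invariant MSCS, a restricted root $\alpha \in R$ is real if and only if $\theta(\alpha) = -\alpha$, i.e.\ if and only if $\alpha^{\tilde{\theta}} = \alpha$; moreover $R_{0} = \{\alpha \in R \mid \theta(\alpha) = \alpha\}$, and $\varPsi(R_{0}) = \varPsi(R) \cap R_{0}$ consists exactly of the black nodes of the Satake diagram. So the lemma is equivalent to: $\alpha^{\tilde{\theta}} \neq \alpha$ for every $\alpha \in R$.

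First I would record, from the type-AII entry of Table $\ref{table.satakelistclassical}$, that with respect to a maximally split Cartan subspace the split rank is $0$; equivalently $\MF{p}\cap\MF{a} = \{0\}$ and $\MF{a}\subset\MF{k}$, equivalently every node of the Satake diagram is black, i.e.\ $\varPsi(R_{0}) = \varPsi(R)$. In this case the Satake involution $p$ of Lemma $\ref{lem.satakeinv}$ is the empty permutation and contributes nothing further, and $\theta(\alpha_{i}) = \alpha_{i}$, hence $\alpha_{i}^{\tilde{\theta}} = -\alpha_{i}$, for every fundamental root $\alpha_{i} \in \varPsi(R)$. Extending $\tilde{\theta}$ by $\BS{R}$-linearity to the root lattice and writing an arbitrary $\alpha \in R$ as $\alpha = \sum_{i} n_{i}\alpha_{i}$ in the notation fixed above, I get $\alpha^{\tilde{\theta}} = \sum_{i} n_{i}\,\alpha_{i}^{\tilde{\theta}} = -\alpha$. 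Since $\alpha \neq 0$ we have $\alpha^{\tilde{\theta}} = -\alpha \neq \alpha$, so $\alpha$ is not real; as $\alpha$ is arbitrary, this proves the lemma. (The same computation shows that in type AII every restricted root is imaginary, which explains the absence of AII from Table $\ref{table.realroots}$.)

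The only real input is the shape of the type-AII Satake diagram with respect to a maximally split Cartan subspace --- concretely the vanishing of the split rank --- which is precisely what Table $\ref{table.satakelistclassical}$ records and what Appendix $\ref{sec.appendix}$ establishes by matching Berger's classification to the correct diagram; I expect that matching to be where the genuine work sits, the deduction above being then immediate and fully parallel to Lemma $\ref{realAI}$ (with ``every node black, $\tilde{\theta} = -\OPE{id}$'' in place of ``no black node, $\tilde{\theta} = \OPE{id}$''). Should the type-AII diagram turn out to contain white nodes, the plan would be unchanged in spirit: I would first pin down $\alpha_{i}^{\tilde{\theta}}$ for each $\alpha_{i} \in \varPsi(R)$ using Lemma $\ref{lem.satakeinv}$ together with the facts that $\tilde{\theta}$ is an isometric involution of $R$ carrying positive roots outside $R_{0}$ to positive roots (these determine each $\alpha_{i}^{\tilde{\theta}}$ uniquely), and then verify, using the explicit $\alpha = \sum n_{i}\alpha_{i}$ expansions, that no positive root of $R$ is a fixed vector of $\tilde{\theta}$; that last verification would be the main obstacle in that scenario.
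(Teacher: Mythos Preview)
Your main argument rests on the claim that the type-AII Satake diagram has split rank $0$, i.e.\ that every node is black. This is false: the AII diagram in Table~\ref{table.satakelistclassical} has \emph{alternating} black and white nodes ($\alpha_{1}$ black, $\alpha_{2}$ white, $\alpha_{3}$ black, \dots, $\alpha_{r}$ black), and the entries in Table~\ref{table.satakeclassical} --- for instance $(\MF{su}^{*}(2n),\MF{so}^{*}(2n))$ with $\RANK = 2n-1$ and $\SRANK = n-1$ --- confirm that the split rank is positive. Consequently $\tilde{\theta}$ does \emph{not} act as $-\OPE{id}$ on the root lattice, and your deduction $\alpha^{\tilde{\theta}} = -\alpha$ for all $\alpha$ collapses.

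Your fallback plan is the correct one, and is precisely what the paper carries out. Writing $\RANK R = 2r-1$, one has $\varPsi(R_{0}) = \{\alpha_{2i-1} \mid 1 \le i \le r\}$ and trivial Satake involution, so Lemma~\ref{lem.satakeinv} gives $\alpha_{2i}^{\tilde{\theta}} \equiv \alpha_{2i} \pmod{\OPE{Span}_{\BS{Z}}\varPsi(R_{0})}$. The candidates for $\alpha_{2i}^{\tilde{\theta}}$ are then $\alpha_{2i}$, $\alpha_{2i-1}+\alpha_{2i}$, $\alpha_{2i}+\alpha_{2i+1}$, and $\alpha_{2i-1}+\alpha_{2i}+\alpha_{2i+1}$; the first three are eliminated by testing that $\tilde{\theta}$ applied to a neighbouring root such as $\alpha_{2i-1}+\alpha_{2i}+\alpha_{2i+1}$ must again lie in $R$, leaving $\alpha_{2i}^{\tilde{\theta}} = \alpha_{2i-1}+\alpha_{2i}+\alpha_{2i+1}$. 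One then computes $\tilde{\theta}$ on an arbitrary positive root $\alpha_{i}+\cdots+\alpha_{j-1}$ by splitting into the four parity cases for $(i,j)$ and verifies that none is fixed. That short case check is exactly the ``main obstacle'' you anticipated in your contingency; it is routine but does need to be written out.
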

\begin{proof}
Without loss of generality, we assume that $\RANK R = 2r-1$.
From the Satake diagram of $(R,  \theta)$
the Satake involution is trivial and
$\varPsi(R_{0})=\{\alpha_{2i-1} \mid 1 \leq i \leq r\}$.
Note that any restricted root $\alpha$ is the form $\pm(\alpha_{i}+\cdots+\alpha_{j-1})$ for $1 \leq i < j \leq 2r$.
Therefore, for each $1 \leq i \leq r$,
the possibility of the form $\alpha^{\tilde{\theta}}_{2i}$ is
either $\alpha_{2i}$, $\alpha_{2i-1}+\alpha_{2i}$, $\alpha_{2i}+\alpha_{2i+1}$ or
$\alpha_{2i-1}+\alpha_{2i}+\alpha_{2i+1}$.
If $\alpha^{\tilde{\theta}}_{2i}=\alpha_{2i}$,
we have
$
(\alpha_{2i-1}+\alpha_{2i}+\alpha_{2i+1})^{\tilde{\theta}}
= -\alpha_{2i-1}+\alpha_{2i}-\alpha_{2i+1}
$.
But this contradicts that $(\alpha_{2i-1}+\alpha_{2i}+\alpha_{2i+1})^{\tilde{\theta}}$
is a restricted root.
Hence we have $\alpha^{\tilde{\theta}}_{2i}\neq\alpha_{2i}$.
By the same argument we have
$\alpha^{\tilde{\theta}}_{2i} = \alpha_{2i-1}+\alpha_{2i}+\alpha_{2i+1}$
for $1 \leq i \leq r-1$.
Moreover, we have
\begin{equation*}
(\alpha_{i}+\cdots+\alpha_{j-1})^{\tilde{\theta}}=
\begin{cases}
\alpha_{i+1}+\cdots+\alpha_{j} & (i:\text{odd},\ j:\text{odd}),\\
\alpha_{i+1}+\cdots+\alpha_{j-2} & (i:\text{odd},\ j:\text{even}),\\
\alpha_{i-1}+\cdots+\alpha_{j} & (i:\text{even},\ j:\text{odd}),\\
\alpha_{i-1}+\cdots+\alpha_{j-2} & (i:\text{even},\ j:\text{even}).
\end{cases}
\end{equation*}
Hence there exists no real restricted root.
\end{proof}
\begin{lemma}\label{realAIII}
In the case where $(R, \theta)$ is of
type AIII$(\RANK=r, \SRANK=l)$,
the set of all real restricted roots of $R$ coincides with
$\{\pm(\alpha_{i}+\cdots+\alpha_{r+1-i}) \mid 1\leq i \leq l\}$. 
\end{lemma}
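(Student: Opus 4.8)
The plan is to follow the pattern of Lemmas \ref{realAI} and \ref{realAII}: read the Satake diagram of type AIII off Table \ref{table.satakelistclassical}, use it to compute the action of $\tilde{\theta}$ on the simple system $\varPsi(R)$, extend that action to all of $R$, and then single out the $\tilde{\theta}$-fixed roots. In the AIII case the restricted root system is of type $A_{r}$, with $\varPsi(R)=\{\alpha_{1},\ldots,\alpha_{r}\}$ the standard chain, $\varPsi(R_{0})=\{\alpha_{l+1},\ldots,\alpha_{r-l}\}$ the middle segment, and the Satake involution $p$ interchanging $\alpha_{i}$ with $\alpha_{r+1-i}$ for $1\le i\le l$ (with the evident degenerations in the boundary cases $r=2l$ and $r=2l-1$).

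The one delicate step is pinning down $\alpha^{\tilde{\theta}}$ on $\varPsi(R)$. For $l+1\le k\le r-l$ the root $\alpha_{k}$ lies in $R_{0}$, so $\theta(\alpha_{k})=\alpha_{k}$ and hence $\alpha_{k}^{\tilde{\theta}}=-\alpha_{k}$. For a white vertex Lemma \ref{lem.satakeinv} gives $\alpha_{i}^{\tilde{\theta}}\equiv\alpha_{r+1-i}$ modulo $\OPE{Span}_{\BS{Z}}\varPsi(R_{0})$; imposing that $\alpha_{i}^{\tilde{\theta}}$ be a root of $A_{r}$, and that $\tilde{\theta}^{2}=\OPE{id}$, then forces $\alpha_{i}^{\tilde{\theta}}=\alpha_{r+1-i}$ for $1\le i\le l-1$ (and symmetrically for $r+2-l\le i\le r$), whereas the two innermost white vertices satisfy $\alpha_{l}^{\tilde{\theta}}=\alpha_{l+1}+\cdots+\alpha_{r+1-l}$ and $\alpha_{r+1-l}^{\tilde{\theta}}=\alpha_{l}+\cdots+\alpha_{r-l}$. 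The correction terms supported on $\varPsi(R_{0})$ are invisible in the diagram and must be recovered from these two constraints, exactly as the analogous terms are recovered in the proof of Lemma \ref{realAII}; this is where care is needed.

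Granting this, the conclusion is a bookkeeping matter. Every root of $A_{r}$ is $\pm(\alpha_{i}+\cdots+\alpha_{j-1})$ with $1\le i<j\le r+1$, so $\tilde{\theta}(\alpha_{i}+\cdots+\alpha_{j-1})$ is obtained by summing the images of its simple summands; organizing by where the interval $[i,j-1]$ sits relative to the black segment $[l+1,r-l]$ and the innermost white indices $l$ and $r+1-l$, one finds that the mixed contributions telescope and that the image equals $\alpha_{i}+\cdots+\alpha_{j-1}$ precisely for the symmetric strings $\alpha_{i}+\cdots+\alpha_{r+1-i}$, $1\le i\le l$. A cleaner route to the same conclusion avoids the casework: the $+1$-eigenspace $V^{+}$ of $\tilde{\theta}$ on $(\MF{a}_{\BS{R}})^{*}$ is the annihilator of $\sqrt{-1}(\MF{k}\cap\MF{a})$, hence $\dim V^{+}=l$; the roots $\gamma_{i}:=\alpha_{i}+\cdots+\alpha_{r+1-i}$ ($1\le i\le l$) are pairwise orthogonal, hence linearly independent, and a one-line computation using the formulas above shows each $\gamma_{i}\in V^{+}$, so that $V^{+}=\OPE{Span}_{\BS{R}}\{\gamma_{1},\ldots,\gamma_{l}\}$; finally, realizing the roots of $A_{r}$ as $\epsilon_{a}-\epsilon_{b}$ and comparing coordinates, one checks that the only roots lying in $V^{+}$ are $\pm\gamma_{1},\ldots,\pm\gamma_{l}$. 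In either approach the real restricted roots are precisely $\{\pm(\alpha_{i}+\cdots+\alpha_{r+1-i})\mid 1\le i\le l\}$. The single genuine obstacle throughout is the correct determination of $\alpha_{l}^{\tilde{\theta}}$ and $\alpha_{r+1-l}^{\tilde{\theta}}$ in the middle step.
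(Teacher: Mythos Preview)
Your proposal is correct and your primary line of argument is exactly the paper's: read off $\varPsi(R_{0})$ and the Satake involution from the AIII diagram, deduce $\alpha_{i}^{\tilde\theta}=\alpha_{r+1-i}$ for $1\le i\le l-1$, $\alpha_{l}^{\tilde\theta}=\alpha_{l+1}+\cdots+\alpha_{r+1-l}$ (and the symmetric formula at $r+1-l$), $\alpha_{k}^{\tilde\theta}=-\alpha_{k}$ on the black segment, and then check which $\alpha_{i}+\cdots+\alpha_{j-1}$ are fixed. The paper organizes this into the two cases $r\in\{2l-1,2l\}$ (where $\varPsi(R_{0})=\emptyset$ and $\alpha_{i}^{\tilde\theta}=p\alpha_{i}$) and $r>2l$, but the formulas and conclusion are identical to yours.

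Your second route, via $\dim V^{+}=l$, the orthogonality of the $\gamma_{i}=\epsilon_{i}-\epsilon_{r+2-i}$, and the observation that the only $A_{r}$ roots lying in $\operatorname{Span}\{\gamma_{1},\ldots,\gamma_{l}\}$ are $\pm\gamma_{i}$, is not in the paper. It is a genuine shortcut: once you know the $\gamma_{i}$ are real (which still requires the computation of $\alpha_{l}^{\tilde\theta}$), it replaces the interval casework by a single linear-algebra check in $\epsilon$-coordinates. The paper's direct verification is more elementary but slightly more laborious; your eigenspace argument is cleaner and would transplant to the other classical types with the same structure.
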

\begin{proof}
In this case we have $p\alpha_{i}=\alpha_{r+1-i}$ for $i=1,\ldots,l,r-l+1,\ldots,r$.
First, we consider the case of $r=2l-1$ or $2l$.
Then, from the Satake diagram of $(R, \theta)$
we have $\Psi(R_{0})=\emptyset$.
This implies that $\alpha^{\tilde{\theta}}_{i}=p\alpha_{i}$
for $i=1,\ldots,l,r-l+1,\ldots,r$.
Therefore, for each $\alpha=\alpha_{i}+\cdots+\alpha_{j-1}$,
$\alpha^{\tilde{\theta}}=\alpha$ holds if and only if
$i+j=r+2$ holds.
Next,
we consider the case of $r>2l$.
From the Satake diagram of $(R, \theta)$ we have
$\varPsi(R_{0}) = \{\alpha_{i} \mid l+1\leq i\leq r-l\}$.
Since $\tilde{\theta}$ leaves $R$ invariant,
we have
\begin{equation*}
\alpha^{\tilde{\theta}}_{i}=
\begin{cases}
\alpha_{r-i+1} & (1 \leq i \leq l-1, r-l+2 \leq i \leq r),\\
\alpha_{l+1} + \cdots + \alpha_{r-l+1} & (i=l),\\
\alpha_{l} + \cdots + \alpha_{r-l} & (i=r-l+1),\\
-\alpha_{i} & (l+1 \leq i \leq r-l).
\end{cases}
\end{equation*}
Hence $\alpha^{\tilde{\theta}}=\alpha$ holds
if and only if
$\alpha$ has the form $\alpha=\pm(\alpha_{i}+\cdots+\alpha_{r+1-i})$.
\end{proof}
\begin{lemma}\label{realBI}
In the case where $(R, \theta)$ is of type
BI$(\RANK=r, \SRANK=l)$,
the set of all real restricted roots of $R$
coincides with
\begin{align*}
\{&\pm(\alpha_{i}+\cdots+\alpha_{j-1}) \mid 1\leq i < j \leq l\}\\
&\cup
\{\pm(\alpha_{i}+\cdots+\alpha_{r}+\alpha_{j}+\cdots+\alpha_{r}) \mid 1\leq i < j \leq l\}
\cup\{\pm(\alpha_{i}+\cdots+\alpha_{r}) \mid 1\leq i \leq l\}.
\end{align*}
\end{lemma}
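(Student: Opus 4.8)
The plan is to follow the same scheme as in Lemmas \ref{realAI}--\ref{realAIII}: read off the Satake data of $(R,\theta)$ of type BI, compute the action of $\tilde{\theta}=-\theta$ on a fundamental system, extend it linearly to all of $R$, and then single out the set of fixed roots $\{\alpha\in R \mid \alpha^{\tilde{\theta}}=\alpha\}$, which as recalled before Theorem \ref{thm.realroots} is exactly the set of real restricted roots.

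First I would record from Table \ref{table.satakelistclassical} the Satake diagram of $(R,\theta)$ of type BI: here $R$ is of type $B_{r}$ with fundamental system $\varPsi(R)=\{\alpha_{1},\ldots,\alpha_{r}\}$ (the long string $\alpha_{1}-\cdots-\alpha_{r-1}$ followed by the short root $\alpha_{r}$), one has $\varPsi(R_{0})=\{\alpha_{l+1},\ldots,\alpha_{r}\}$, and the Satake involution $p$ is trivial because the Dynkin diagram of $B_{r}$ has no nontrivial automorphism. Hence $\alpha_{i}^{\tilde{\theta}}=-\alpha_{i}$ for $l+1\le i\le r$, while for $1\le i\le l$ Lemma \ref{lem.satakeinv} tells us that $\alpha_{i}^{\tilde{\theta}}$ is a positive root congruent to $\alpha_{i}$ modulo $\OPE{Span}_{\BS{Z}}\varPsi(R_{0})$. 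For $1\le i\le l-1$ the node $\alpha_{i}$ is joined to no black node, so the only such positive root is $\alpha_{i}$ itself, i.e.\ $\alpha_{i}^{\tilde{\theta}}=\alpha_{i}$. For $i=l$ the candidates are the positive roots supported on $\{\alpha_{l},\ldots,\alpha_{r}\}$ with $\alpha_{l}$-coefficient $1$; invariance of $R$ and of the Killing form under $\tilde{\theta}$, together with $\alpha_{j}^{\tilde{\theta}}=-\alpha_{j}$ for the black roots, gives $\INN{\alpha_{l}^{\tilde{\theta}}}{\alpha_{j}}=-\INN{\alpha_{l}}{\alpha_{j}}$ for $l+1\le j\le r$, so $\alpha_{l}^{\tilde{\theta}}$ must pair positively with $\alpha_{l+1}$ and be orthogonal to $\alpha_{l+2},\ldots,\alpha_{r}$; the unique root with these properties is $\alpha_{l}^{\tilde{\theta}}=\alpha_{l}+2\alpha_{l+1}+\cdots+2\alpha_{r}$.

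Next I would pass to the standard coordinates for $B_{r}$, writing $\alpha_{i}=\varepsilon_{i}-\varepsilon_{i+1}$ for $i<r$ and $\alpha_{r}=\varepsilon_{r}$. The computation above says exactly that $\tilde{\theta}(\varepsilon_{i})=\varepsilon_{i}$ for $1\le i\le l$ and $\tilde{\theta}(\varepsilon_{i})=-\varepsilon_{i}$ for $l+1\le i\le r$. Consequently a root $\alpha$ satisfies $\alpha^{\tilde{\theta}}=\alpha$ if and only if it is supported on $\varepsilon_{1},\ldots,\varepsilon_{l}$, and running over the roots of $B_{r}$ these are precisely $\pm(\varepsilon_{i}-\varepsilon_{j})$ and $\pm(\varepsilon_{i}+\varepsilon_{j})$ for $1\le i<j\le l$ together with $\pm\varepsilon_{i}$ for $1\le i\le l$. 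Re-expressing in the fundamental system via $\varepsilon_{i}-\varepsilon_{j}=\alpha_{i}+\cdots+\alpha_{j-1}$, $\varepsilon_{i}=\alpha_{i}+\cdots+\alpha_{r}$ and $\varepsilon_{i}+\varepsilon_{j}=\alpha_{i}+\cdots+\alpha_{r}+\alpha_{j}+\cdots+\alpha_{r}$ yields exactly the three families in the statement; since $\alpha$ is real if and only if $\alpha^{\tilde{\theta}}=\alpha$, this proves the lemma.

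The only step that requires genuine care is the exact value of $\alpha_{l}^{\tilde{\theta}}$: Lemma \ref{lem.satakeinv} fixes it only modulo $\OPE{Span}_{\BS{Z}}\varPsi(R_{0})$, and one has to use that $\tilde{\theta}$ preserves the root system and the Killing form (hence all root lengths and angles) to pin the correction term down as $2(\alpha_{l+1}+\cdots+\alpha_{r})$; once this is established, the identification of the fixed roots is a direct reading of the root system $B_{r}$ in $\varepsilon$-coordinates, exactly as in the proof of Lemma \ref{realAIII}.
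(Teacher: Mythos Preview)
Your proof is correct and follows essentially the same approach as the paper's: read off the Satake data for type BI, compute $\tilde{\theta}$ on the simple roots (obtaining the same value $\alpha_l^{\tilde{\theta}}=\alpha_l+2\alpha_{l+1}+\cdots+2\alpha_r$), and then determine the fixed-point set $\{\alpha\in R\mid \alpha^{\tilde{\theta}}=\alpha\}$. Your passage to the standard $\varepsilon$-coordinates is a clean packaging of what the paper simply calls ``direct calculation,'' and your inner-product argument for pinning down $\alpha_l^{\tilde{\theta}}$ makes explicit what the paper leaves to the phrase ``$\tilde{\theta}$ leaves the root system $R$ invariant.''
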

\begin{proof}
From the Satake diagram of $(R, \theta)$
the Satake involution is trivial
and $\varPsi(R_{0})=\{\alpha_{l+k} \mid 1 \leq k \leq r-l\}$.
Since any positive root has the form
$\alpha_{i}+\cdots+\alpha_{j-1}, \alpha_{i}+\cdots+\alpha_{r}+\alpha_{j}+\cdots+\alpha_{r}$ or $\alpha_{i}+\cdots+\alpha_{r}$,
we have $\alpha^{\tilde{\theta}}_{i}=\alpha_{i}$ for $1 \leq i \leq l-1$.
Moreover,
$\alpha^{\tilde{\theta}}_{l}=\alpha_{l}+\cdots+\alpha_{r}+\alpha_{l+1}+\cdots+\alpha_{r}$ holds because $\tilde{\theta}$ leaves the root system $R$ invariant.
Therefore, by direct calculation
we can explicitly determine the set $\{\alpha \in R \mid \alpha = \alpha^{\tilde{\theta}}\}$ as in the assertion. 
\end{proof}
\noindent
By imitating the proof of Lemma \ref{realBI}
we have the following two facts.
\begin{lemma}\label{realBCI}
In the case where $(R, \theta)$ is of type
BCI$(\RANK=r, \SRANK=l)$,
the set of all real restricted roots of $R$
coincides with
\begin{align*}
\{&\pm(\alpha_{i}+\cdots+\alpha_{j-1}) \mid 1\leq i < j \leq l\}\\
&\cup\{\pm(\alpha_{i}+\cdots+\alpha_{r}+\alpha_{j}+\cdots+\alpha_{r}) \mid 1\leq i < j \leq l\}\\
&\cup
\{\pm(\alpha_{i}+\cdots+\alpha_{r}) \mid 1\leq i \leq l\}
\cup
\{\pm2(\alpha_{i}+\cdots+\alpha_{r}) \mid 1\leq i \leq l\}.
\end{align*}
\end{lemma}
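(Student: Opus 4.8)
The plan is to follow the proof of Lemma~\ref{realBI} almost verbatim, the only new feature being that $R$ is of non-reduced type $BC_{r}$. First I would read off from the Satake diagram associated with $(R,\theta)$ of type BCI$(\RANK=r,\SRANK=l)$ that the Satake involution is trivial and that $\varPsi(R_{0})=\{\alpha_{l+k}\mid 1\leq k\leq r-l\}$, exactly as in type BI. Besides the positive roots $\alpha_{i}+\cdots+\alpha_{j-1}$, $\alpha_{i}+\cdots+\alpha_{r}+\alpha_{j}+\cdots+\alpha_{r}$ and $\alpha_{i}+\cdots+\alpha_{r}$ already present in type $B_{r}$, the root system $BC_{r}$ also contains the doubled positive roots $2(\alpha_{i}+\cdots+\alpha_{r})$ for $1\leq i\leq r$, and these must be kept in the bookkeeping below.

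Next I would determine the action of $\tilde{\theta}=-\theta$ on the fundamental system. Since none of $\alpha_{1},\dots,\alpha_{l-1}$ is joined to a node of $\varPsi(R_{0})$, Lemma~\ref{lem.satakeinv} gives $\alpha_{i}^{\tilde{\theta}}=\alpha_{i}$ for $1\leq i\leq l-1$, while $\alpha_{k}^{\tilde{\theta}}=-\alpha_{k}$ for $l+1\leq k\leq r$ because $\alpha_{k}\in R_{0}$. For $\alpha_{l}$ the congruence of Lemma~\ref{lem.satakeinv} only says $\alpha_{l}^{\tilde{\theta}}\equiv\alpha_{l}\ (\OPE{mod}\OPE{Span}_{\BS{Z}}\varPsi(R_{0}))$; using in addition that $\tilde{\theta}$ permutes $R$, the only possibility is $\alpha_{l}^{\tilde{\theta}}=\alpha_{l}+\cdots+\alpha_{r}+\alpha_{l+1}+\cdots+\alpha_{r}$, just as in Lemma~\ref{realBI}.

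Finally I would extend $\tilde{\theta}$ by linearity and, recalling that a restricted root is real precisely when it is fixed by $\tilde{\theta}$, evaluate $\alpha^{\tilde{\theta}}$ on every positive root of $BC_{r}$ and collect the fixed ones. Writing $e_{i}:=\alpha_{i}+\cdots+\alpha_{r}$, one checks that the term $2(\alpha_{l+1}+\cdots+\alpha_{r})$ coming from $\alpha_{l}^{\tilde{\theta}}$ exactly cancels the sign reversal of $\alpha_{l+1},\dots,\alpha_{r}$, so that $e_{i}^{\tilde{\theta}}=e_{i}$ precisely when $1\leq i\leq l$; hence $\pm e_{i}$, $\pm(e_{i}+e_{j})$ and $\pm2e_{i}$ are real exactly for the index ranges in the statement, whereas $e_{i}-e_{j}=\alpha_{i}+\cdots+\alpha_{j-1}$ is fixed by $\tilde{\theta}$ exactly when no black node occurs, i.e.\ when $j\leq l$. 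Assembling these cases yields the asserted list. I do not expect a genuine obstacle: the argument is the routine computation of Lemma~\ref{realBI}, and the only points needing care are that the doubled roots $2(\alpha_{i}+\cdots+\alpha_{r})$ be included in the enumeration (their reality being inherited from that of $\alpha_{i}+\cdots+\alpha_{r}$) and that the value of $\alpha_{l}^{\tilde{\theta}}$ be pinned down from root-system invariance rather than from the mod-$\varPsi(R_{0})$ congruence alone.
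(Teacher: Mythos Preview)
Your proposal is correct and matches the paper's approach exactly: the paper proves Lemma~\ref{realBCI} simply by saying ``by imitating the proof of Lemma~\ref{realBI},'' and you have spelled out precisely that imitation, including the one new ingredient (the doubled roots $2(\alpha_{i}+\cdots+\alpha_{r})$, whose reality follows immediately from that of $\alpha_{i}+\cdots+\alpha_{r}$). The determination of $\alpha_{l}^{\tilde{\theta}}$ and the subsequent case check are identical to the BI argument.
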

\begin{lemma}\label{realCI}
In the case where $(R, \theta)$ is of type
CI$(\RANK=r, \SRANK=l)$,
the set of all real restricted roots
coincides with
\begin{align*}
\{&\pm(\alpha_{i}+\cdots+\alpha_{j-1}) \mid 1\leq i < j \leq l\}\\
&\cup
\{\pm(\alpha_{i}+\cdots+\alpha_{j-1}+2\alpha_{j}+\cdots+2\alpha_{r-1}+\alpha_{r}) \mid 1\leq i < j \leq l\}\\
&\cup
\{\pm(2\alpha_{i}+\cdots+2\alpha_{r-1}+\alpha_{r}) \mid 1\leq i \leq l\}.
\end{align*}
\end{lemma}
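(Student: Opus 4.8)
The plan is to imitate the proof of Lemma~\ref{realBI}, now with the root system of type $C_{r}$ in place of $B_{r}$. First I read off from the Satake diagram of type CI in Table~\ref{table.satakelistclassical} that the Satake involution is trivial and that $\varPsi(R_{0})=\{\alpha_{l+1},\ldots,\alpha_{r}\}$, which spans a subsystem $R_{0}$ of type $C_{r-l}$; if $l=r$ then $R_{0}=\emptyset$ and every restricted root is real, in agreement with the three families below exhausting $R$, so I assume $l<r$. Since $\tilde{\theta}$ acts as $-\OPE{id}$ on $R_{0}$, satisfies $\alpha^{\tilde{\theta}}\equiv\alpha\ (\OPE{mod}\ \OPE{Span}_{\BS{Z}}\varPsi(R_{0}))$ for $\alpha\in\varPsi(R)\setminus\varPsi(R_{0})$ by Lemma~\ref{lem.satakeinv}, and leaves $R$ invariant with $\tilde{\theta}^{2}=\OPE{id}$, the images of the fundamental roots are forced to be
\[
\alpha_{i}^{\tilde{\theta}}=\alpha_{i}\ (1\le i\le l-1),\qquad \alpha_{l}^{\tilde{\theta}}=\alpha_{l}+2\alpha_{l+1}+\cdots+2\alpha_{r-1}+\alpha_{r},\qquad \alpha_{i}^{\tilde{\theta}}=-\alpha_{i}\ (l+1\le i\le r),
\]
the middle equality being the unique positive root congruent to $\alpha_{l}$ modulo $R_{0}$ compatible with these constraints, exactly as $\alpha_{l}^{\tilde{\theta}}=\alpha_{l}+\cdots+\alpha_{r}+\alpha_{l+1}+\cdots+\alpha_{r}$ arose in the $B_{r}$ case.

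Next I extend $\tilde{\theta}$ linearly and determine the roots with $\alpha^{\tilde{\theta}}=\alpha$. Writing a root as $\alpha=\sum_{k}n_{k}\alpha_{k}$, the vector $\alpha^{\tilde{\theta}}$ has the same coefficients as $\alpha$ on $\alpha_{1},\ldots,\alpha_{l}$, coefficient $2n_{l}-n_{m}$ on $\alpha_{m}$ for $l+1\le m\le r-1$, and coefficient $n_{l}-n_{r}$ on $\alpha_{r}$; hence $\alpha^{\tilde{\theta}}=\alpha$ if and only if $n_{m}=n_{l}$ for all $l+1\le m\le r-1$ and $n_{l}=2n_{r}$. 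I then run this test over the three families of positive roots of $C_{r}$, namely $\alpha_{i}+\cdots+\alpha_{j-1}$ and $\alpha_{i}+\cdots+\alpha_{j-1}+2\alpha_{j}+\cdots+2\alpha_{r-1}+\alpha_{r}$ for $1\le i<j\le r$, together with $2\alpha_{i}+\cdots+2\alpha_{r-1}+\alpha_{r}$ for $1\le i\le r$. For the first two families the condition collapses to $j\le l$ and for the third to $i\le l$; adjoining the negatives, this produces precisely the three sets in the statement.

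The step I expect to require the most care is the coefficient bookkeeping for the last two families, where the coefficients of $\alpha_{l+1},\ldots,\alpha_{r-1}$ equal $2$ and that of $\alpha_{r}$ equals $1$: one must check that the doubling produced by the term $n_{l}\alpha_{l}^{\tilde{\theta}}$ exactly offsets the sign reversal of these coefficients, so that such roots remain real even though $\tilde{\theta}$ moves each of $\alpha_{l+1},\ldots,\alpha_{r}$ individually. One also has to confirm that no root outside the three displayed families meets both $n_{m}=n_{l}$ and $n_{l}=2n_{r}$, and to settle the low-rank coincidence $C_{2}$ separately; these are routine once the image $\alpha_{l}^{\tilde{\theta}}$ above is established. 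Collecting the surviving roots and their negatives completes the proof, in parallel with Lemmas~\ref{realBI} and~\ref{realBCI}.
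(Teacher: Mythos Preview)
Your proof is correct and follows exactly the route the paper indicates: it simply says the result is obtained ``by imitating the proof of Lemma~\ref{realBI}'', and your argument carries out precisely that imitation with the $C_{r}$ root system in place of $B_{r}$. The aside about treating $C_{2}$ separately is unnecessary (your coefficient conditions $n_{m}=n_{l}$ for $l+1\le m\le r-1$ and $n_{l}=2n_{r}$ already cover it, the first condition being vacuous when $r=l+1$), but this does not affect the validity of the argument.
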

\begin{lemma}\label{realCIII}
In the case where $(R, \theta)$ is of type
CIII$(\RANK=r, \SRANK=l)$,
the set of all real restricted roots of $R$
coincides with
$\{\pm(\alpha_{2i-1}+2\alpha_{2i}+\cdots+2\alpha_{r-1}+\alpha_{r}) \mid 1 \leq i \leq l\}$.
\end{lemma}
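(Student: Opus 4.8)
The plan is to imitate the proof of Lemma \ref{realAIII}, exploiting the explicit Satake diagram of type CIII. First I would record the relevant combinatorial data: the underlying root system $R$ is of type $C_{r}$, and from the Satake diagram associated with $(R,\theta)$ for the type CIII we read off that the Satake involution $p$ is trivial and that $\varPsi(R_{0})$ consists of all fundamental roots except $\alpha_{1},\alpha_{3},\ldots,\alpha_{2l-1}$ (equivalently, the black nodes are exactly the $\alpha_{i}$ with $i$ even together with $\alpha_{2l+1},\ldots,\alpha_{r}$). Concretely, $\varPsi(R)\setminus\varPsi(R_{0})=\{\alpha_{2i-1}\mid 1\leq i\leq l\}$, so that a real restricted root is precisely one fixed by $\tilde\theta=-\theta$.

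Next I would compute $\alpha_{j}^{\tilde\theta}$ for each fundamental root $\alpha_{j}$. For $\alpha_{j}\in\varPsi(R_{0})$ we have $\alpha_{j}^{\tilde\theta}=-\alpha_{j}$; for the odd-indexed roots $\alpha_{2i-1}$ the image $\alpha_{2i-1}^{\tilde\theta}$ must be a positive root (since $\tilde\theta$ preserves the $(-\theta)$-order on $\varPsi(R)\setminus\varPsi(R_{0})$ up to $\varPsi(R_{0})$, and the Satake involution is trivial) congruent to $\alpha_{2i-1}$ modulo $\operatorname{Span}_{\BS{Z}}\varPsi(R_{0})$. A short case check of which positive roots of $C_{r}$ have that congruence class forces $\alpha_{2i-1}^{\tilde\theta}=\alpha_{2i-1}+2\alpha_{2i}+\cdots+2\alpha_{r-1}+\alpha_{r}$ for $1\leq i\leq l$. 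This is the one genuinely new computation; everything else is bookkeeping. With these values of $\tilde\theta$ on the fundamental roots, $\tilde\theta$ extends linearly and I can evaluate it on an arbitrary positive root $\alpha=\sum n_{j}\alpha_{j}$ written in the standard parametrization of the positive roots of $C_{r}$.

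Finally I would solve the fixed-point equation $\alpha^{\tilde\theta}=\alpha$. Writing $\alpha^{\tilde\theta}-\alpha$ in terms of the fundamental roots and using that the $\alpha_{j}$ are linearly independent, the equation splits into the condition that the coefficients of the black simple roots cancel, which (together with the requirement that $\alpha$ itself be a genuine root of $C_{r}$, i.e.\ that its coefficient vector be one of the admissible ones) pins down $\alpha$ to be, up to sign, $\alpha_{2i-1}+2\alpha_{2i}+\cdots+2\alpha_{r-1}+\alpha_{r}$ for some $1\leq i\leq l$. Conversely each such vector is visibly fixed by $\tilde\theta$ by the formula for $\alpha_{2i-1}^{\tilde\theta}$ combined with $\alpha_{2i}^{\tilde\theta}=-\alpha_{2i},\ldots,\alpha_{r}^{\tilde\theta}=-\alpha_{r}$, since then $\bigl(\alpha_{2i-1}+2\alpha_{2i}+\cdots+2\alpha_{r-1}+\alpha_{r}\bigr)^{\tilde\theta}=\alpha_{2i-1}+2\alpha_{2i}+\cdots+2\alpha_{r-1}+\alpha_{r}-\bigl(2\alpha_{2i}+\cdots+2\alpha_{r-1}+\alpha_{r}\bigr)+\bigl(2\alpha_{2i}+\cdots+2\alpha_{r-1}+\alpha_{r}\bigr)$ collapses back to itself. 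The main obstacle is simply making sure the Satake-diagram data for CIII is transcribed correctly and that the short list of candidate images for each $\alpha_{2i-1}^{\tilde\theta}$ is exhaustive; once that is in hand the argument is a direct parallel of Lemma \ref{realAIII}, so I would keep the write-up terse and refer back to that proof for the pattern.
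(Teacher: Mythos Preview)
Your overall strategy is right, but you have transcribed the Satake diagram for type CIII with the wrong parity. In the paper's table the \emph{black} nodes of CIII are the odd-indexed simple roots $\alpha_{1},\alpha_{3},\ldots,\alpha_{2l-1}$ (together with $\alpha_{2l+1},\ldots,\alpha_{r}$ when $r>2l$), so that $\varPsi(R_{0})=\{\alpha_{2i-1}\mid 1\le i\le l\}$ in the case $r=2l$, and the white nodes on which one must compute $\tilde\theta$ are the \emph{even}-indexed $\alpha_{2},\alpha_{4},\ldots,\alpha_{2l}$. You have this reversed, and consequently your formula $\alpha_{2i-1}^{\tilde\theta}=\alpha_{2i-1}+2\alpha_{2i}+\cdots+2\alpha_{r-1}+\alpha_{r}$ is wrong: since $\alpha_{2i-1}\in\varPsi(R_{0})$ one simply has $\alpha_{2i-1}^{\tilde\theta}=-\alpha_{2i-1}$.

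Once the parity is fixed, the computation is not the single global formula you wrote but the one the paper uses: for $1\le i\le l-1$ one finds $\alpha_{2i}^{\tilde\theta}=\alpha_{2i-1}+\alpha_{2i}+\alpha_{2i+1}$, while the top white node behaves differently, namely $\alpha_{2l}^{\tilde\theta}=2\alpha_{2l-1}+\alpha_{2l}$ when $r=2l$ and $\alpha_{2l}^{\tilde\theta}=\alpha_{2l-1}+\alpha_{2l}+2\alpha_{2l+1}+\cdots+2\alpha_{r-1}+\alpha_{r}$ when $r>2l$. This case split (which your proposal omits) is necessary because the tail of black nodes beyond $\alpha_{2l}$ changes the congruence class of $\alpha_{2l}^{\tilde\theta}$. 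After that correction your plan goes through: extend $\tilde\theta$ linearly, run over the explicit list of positive roots of $C_{r}$, and check which ones are fixed; the answer is the set in the statement.
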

\begin{proof}
First, we consider the case of $r=2l$.
From the Satake diagram of $(R, \theta)$
the Satake involution is trivial
and $\varPsi(R_{0})=\{\alpha_{2i-1} \mid 1 \leq i \leq l\}$.
Since $\tilde{\theta}$ leaves the root system $R$ invariant and 
any positive root has the form
$\alpha_{i}+\cdots+\alpha_{j-1},
\alpha_{i}+\cdots+\alpha_{j-1}+2\alpha_{j}+\cdots+\alpha_{2l-1}+\alpha_{2l}$
or
$2\alpha_{i}+\cdots+2\alpha_{2l-1}+\alpha_{2l}$,
we have 
\begin{equation*}
\alpha^{\tilde{\theta}}_{2i} =
\begin{cases}
\alpha_{2i-1}+\alpha_{2i}+\alpha_{2i+1} & (1 \leq i \leq l-1)\\
2\alpha_{2l-1}+\alpha_{2l}              & (i=l)
\end{cases}
\end{equation*}
Therefore,
by direct calculation we can explicitly determine
the set $\{\alpha \in R \mid \alpha=\alpha^{\tilde{\theta}}\}$
as in the assertion.
Next,
we consider the case of $r>2l$.
By the same argument as above we have
$\alpha^{\tilde{\theta}}_{2i-1}=-\alpha_{2i-1} (1 \leq i \leq l)$,
$\alpha^{\tilde{\theta}}_{2l+k}=-\alpha_{2l+k} (1 \leq k \leq r-2l)$
and
\begin{equation*}
\alpha^{\tilde{\theta}}_{2i}=
\begin{cases}
\alpha_{2i-1}+\alpha_{2i}+\alpha_{2i+1} & (1 \leq i \leq l-1),\\
\alpha_{2l-1}+\alpha_{2l}+2\alpha_{2l+1}+\cdots+2\alpha_{r-1}+\alpha_{r} & (i=l).
\end{cases}
\end{equation*}
Therefore,
by direct calculation we can explicitly determine
the set $\{\alpha \in R \mid \alpha=\alpha^{\tilde{\theta}}\}$
as in the assertion.
\end{proof}
\noindent
By imitating the proof of Lemma \ref{realCIII}
we have the following fact.
\begin{lemma}\label{realBCIII}
In the case where $(R, \theta)$ is of type
BCIII$(\RANK=r, \SRANK=l)$,
the set of all real restricted roots of $R$
coincides with
$\{\pm(\alpha_{2i-1}+2\alpha_{2i}+\cdots+2\alpha_{r}) \mid 1\leq i \leq l\}$.
\end{lemma}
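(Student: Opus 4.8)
\noindent
The plan is to imitate the proof of Lemma~\ref{realCIII} line by line, substituting the $BC_{r}$ data of type BCIII for the $C_{r}$ data used there. Since, by the discussion preceding Theorem~\ref{thm.realroots}, a restricted root $\alpha$ is real precisely when $\alpha^{\tilde{\theta}}=\alpha$, the whole task reduces to computing the fixed point set $\{\alpha\in R \mid \alpha=\alpha^{\tilde{\theta}}\}$. First I would read off from the Satake diagram of $(R,\theta)$ recorded in Table~\ref{table.satakelistclassical} that the Satake involution is trivial and that $\varPsi(R_{0})$ is built from the same pattern of black nodes as in type CIII, namely $\varPsi(R_{0})=\{\alpha_{2i-1}\mid 1\leq i\leq l\}$ when $r=2l$, with the additional trailing black nodes $\alpha_{2l+k}$ ($1\leq k\leq r-2l$) when $r>2l$.

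By Lemma~\ref{lem.satakeinv}, $\tilde{\theta}=-\theta$ acts by $-\OPE{id}$ on every root of $\varPsi(R_{0})$, while it fixes each remaining fundamental root $\alpha_{2i}$ modulo $\OPE{Span}_{\BS{Z}}\varPsi(R_{0})$. Thus $\alpha^{\tilde{\theta}}_{2i}$ is determined only up to an integral combination of the odd-index roots, and I would pin it down by the constraint that $\alpha^{\tilde{\theta}}_{2i}$ must again be a root of $BC_{r}$, i.e.\ of one of the shapes $\alpha_{i}+\cdots+\alpha_{j-1}$, $\alpha_{i}+\cdots+\alpha_{r}+\alpha_{j}+\cdots+\alpha_{r}$, $\alpha_{i}+\cdots+\alpha_{r}$, or $2(\alpha_{i}+\cdots+\alpha_{r})$; this forces $\alpha^{\tilde{\theta}}_{2i}=\alpha_{2i-1}+\alpha_{2i}+\alpha_{2i+1}$ for $1\leq i\leq l-1$ together with the appropriate value of $\alpha^{\tilde{\theta}}_{2l}$, exactly in parallel with Lemma~\ref{realCIII}. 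Extending $\tilde{\theta}$ by linearity and running through the positive roots of $BC_{r}$, one solves $\alpha=\alpha^{\tilde{\theta}}$ by direct computation; the solution set should come out to be $\{\pm(\alpha_{2i-1}+2\alpha_{2i}+\cdots+2\alpha_{r})\mid 1\leq i\leq l\}$, which is the claimed list.

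Because the argument copies that of Lemma~\ref{realCIII}, no new idea is needed; the only genuine work is bookkeeping. The points I expect to require care are (i) extracting $\varPsi(R_{0})$ and confirming the triviality of the Satake involution from the BCIII diagram in Table~\ref{table.satakelistclassical}, and (ii) keeping track of the $BC_{r}$-specific ``doubled'' roots $2(\alpha_{i}+\cdots+\alpha_{r})$ and of the split rank case distinction $r=2l$ versus $r>2l$ throughout the fixed-point computation --- these are where a miscalculation would most easily creep in, but they involve nothing beyond the techniques already used in Lemmas~\ref{realBI}--\ref{realCIII}.
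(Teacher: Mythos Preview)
Your proposal is correct and follows exactly the approach the paper takes: the paper's own proof of this lemma consists solely of the sentence ``By imitating the proof of Lemma~\ref{realCIII} we have the following fact,'' and your sketch is a faithful expansion of what that imitation entails, including the correct reading of the BCIII Satake diagram from Table~\ref{table.satakelistclassical}, the correct list of $BC_{r}$ root shapes, and the case split $r=2l$ versus $r>2l$.
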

\begin{lemma}\label{realDI-2}
In the case where $(R, \theta)$ is of type
DI$(\RANK=r, \SRANK=l)$ $(r > l)$,
the set of all real restricted roots of $R$
coincides with
\begin{equation*}
\{\pm(\alpha_{i}+\cdots+\alpha_{j-1}) \mid 1\leq i<j \leq l\}\\
\cup
\{\pm(\alpha_{i}+\cdots+\alpha_{r-2}+\alpha_{j}+\cdots+\alpha_{r}) \mid 1 \leq i<j \leq l\}.
\end{equation*}
\end{lemma}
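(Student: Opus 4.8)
The plan is to argue exactly as in the proofs of Lemmas \ref{realBI} and \ref{realCIII}: read off the Satake data of type DI$(\RANK=r,\SRANK=l)$ with $r>l$ from Table \ref{table.satakelistclassical}, determine the action of $\tilde\theta=-\theta$ on the fundamental roots $\alpha_{1},\ldots,\alpha_{r}$ of the root system $R$ (of type $D_{r}$), extend it linearly, and then isolate the $\tilde\theta$-fixed roots, which are precisely the real ones.

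First I would record the Satake diagram. Its white nodes are $\alpha_{1},\ldots,\alpha_{l}$; for $l\leq r-2$ the black part is $\varPsi(R_{0})=\{\alpha_{l+1},\ldots,\alpha_{r}\}$ and the Satake involution is trivial, while for $l=r-1$ one has $\varPsi(R_{0})=\emptyset$ and the Satake involution interchanges the two fork nodes $\alpha_{r-1},\alpha_{r}$. In the standard realization $\alpha_{i}=e_{i}-e_{i+1}$ $(1\leq i\leq r-1)$, $\alpha_{r}=e_{r-1}+e_{r}$ of $D_{r}$, the functionals $e_{1},\ldots,e_{l}$ (resp.\ $e_{l+1},\ldots,e_{r}$) correspond under the Killing form to the basis of $\MF{a}_{\BS{R}}$ chosen in Section \ref{sec.pre} lying in $\MF{p}\cap\MF{a}$ (resp.\ $\sqrt{-1}(\MF{k}\cap\MF{a})$), on which $\theta$ acts by $-\OPE{id}$ (resp.\ $\OPE{id}$), so that
\begin{equation*}
\tilde\theta(e_{i})=e_{i}\ (1\leq i\leq l),\qquad \tilde\theta(e_{i})=-e_{i}\ (l+1\leq i\leq r).
\end{equation*}
Equivalently, in fundamental-root form, $\alpha^{\tilde\theta}_{i}=\alpha_{i}$ for $1\leq i\leq l-1$, $\alpha^{\tilde\theta}_{i}=-\alpha_{i}$ for $l+1\leq i\leq r$, and $\alpha^{\tilde\theta}_{l}=\alpha_{l}+\cdots+\alpha_{r-2}+\alpha_{l+1}+\cdots+\alpha_{r}$ $(=e_{l}+e_{l+1})$; the last identity can also be obtained, as in the proof of Lemma \ref{realBI}, from $\alpha^{\tilde\theta}_{l}\equiv\alpha_{l}\ (\OPE{mod}\ \OPE{Span}_{\BS{Z}}\varPsi(R_{0}))$ together with $\alpha^{\tilde\theta}_{l}\in R$ (for $l=r-1$ it degenerates to $\alpha^{\tilde\theta}_{r-1}=\alpha_{r}$).

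Next I would extend $\tilde\theta$ linearly and solve $\alpha^{\tilde\theta}=\alpha$. The positive roots of $D_{r}$ are $e_{a}-e_{b}$ and $e_{a}+e_{b}$ with $1\leq a<b\leq r$, and by the displayed action such a root is fixed by $\tilde\theta$ if and only if it involves none of $e_{l+1},\ldots,e_{r}$, i.e.\ if and only if it is $e_{a}-e_{b}$ or $e_{a}+e_{b}$ with $1\leq a<b\leq l$. Writing these in fundamental-root coordinates, $e_{a}-e_{b}=\alpha_{a}+\cdots+\alpha_{b-1}$ and $e_{a}+e_{b}=\alpha_{a}+\cdots+\alpha_{r-2}+\alpha_{b}+\cdots+\alpha_{r}$ (both expressions are legitimate since $b\leq l\leq r-1$), and adjoining the sign coming from $-\theta$ yields precisely the set in the assertion.

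I expect the only delicate step to be the first one: correctly extracting the Satake diagram of DI with $r>l$ from Table \ref{table.satakelistclassical}, distinguishing the sub-cases $l\leq r-2$ and $l=r-1$ (a black fork pair versus an arrowed fork pair), and pinning down $\alpha^{\tilde\theta}_{l}$. Once $\tilde\theta$ is in hand, identifying the $\tilde\theta$-fixed roots of $D_{r}$ and rewriting them in the stated form is the same routine verification as in Lemmas \ref{realAIII} and \ref{realBI}, which I would present as ``by direct calculation'' in the style of those proofs.
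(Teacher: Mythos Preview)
Your argument is correct and follows the same two-case structure as the paper's proof (the sub-cases $l=r-1$ and $l\leq r-2$, determination of $\alpha^{\tilde\theta}_{i}$ from the Satake data, then isolating the $\tilde\theta$-fixed roots). The only difference is presentational: the paper works entirely in fundamental-root coordinates and handles each family of positive roots by explicit substitution, whereas you pass to the standard $e_{i}$-realization of $D_{r}$, where $\tilde\theta$ acts diagonally as $e_{i}\mapsto\pm e_{i}$ and the fixed roots $e_{a}\pm e_{b}$ with $a<b\leq l$ are read off immediately; this makes the ``direct calculation'' step shorter and treats the two sub-cases uniformly.
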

\begin{proof}
First, we consider the case of $r=l+1$.
From the Satake diagram of $(R, \theta)$
we have $\varPsi(R_{0}) = \emptyset$ and 
\begin{equation*}
\alpha^{\tilde{\theta}}_{i}=p\alpha_{i}=
\begin{cases}
\alpha_{i} & (1\leq i \leq r-2),\\
\alpha_{r} & (i=r-1),\\
\alpha_{r-1} & (i=r).
\end{cases}
\end{equation*}
By direct calculation we have
\begin{align*}
(\alpha_{i}+\cdots+\alpha_{j-1})^{\tilde{\theta}}&=
\begin{cases}
\alpha_{i}+\cdots+\alpha_{j-1} & (1\leq i<j \leq r-1),\\
\alpha_{i}+\cdots+\alpha_{r-2}+\alpha_{r} & (1\leq i < j=r),
\end{cases}\\
(\alpha_{i}+\cdots+\alpha_{r-2}+\alpha_{j}+\cdots+\alpha_{r})^{\tilde{\theta}}
&\\
&\hspace{-0.1\textwidth}=\begin{cases}
\alpha_{i}+\cdots+\alpha_{r-2}+\alpha_{j}+\cdots+\alpha_{r} & (1\leq i< j \leq r-1),\\
\alpha_{i}+\cdots+\alpha_{r-1} & (1 \leq i<j =r).
\end{cases}
\end{align*}
This proves the statement.
Next, we consider  the case of $r>l+1$.
From the Satake diagram of $(R, \theta)$
the Satake involution is trivial and
$\varPsi(R_{0})=\{\alpha_{l+k} \mid 1\leq i\leq r-l\}$.
Since $\tilde{\theta}$ leaves the root system $R$ invariant,
we have
\begin{equation*}
\alpha^{\tilde{\theta}}_{i}=
\begin{cases}
\alpha_{i} & (1 \leq i \leq l-1),\\
\alpha_{l}+\cdots+\alpha_{r-2}+\alpha_{l+1}+\cdots+\alpha_{r} & (i=l).
\end{cases}
\end{equation*}
Therefore,
by direct calculation we can explicitly determine
the set $\{\alpha\in R \mid \alpha=\alpha^{\tilde{\theta}}\}$
as in the assertion.
\end{proof}
\noindent
By imitating the proof of Lemma \ref{realDI-2} we have the following fact.
\begin{lemma}\label{realDIII-1}
In the case where $(R, \theta)$ is of type
DIII$(\RANK=r, \SRANK=l)$,
the set of all real restricted roots of $R$
coincides with
\begin{equation*}
\{\pm(\alpha_{2i-1}+\cdots+\alpha_{r-2}+\alpha_{2i}+\cdots+\alpha_{r}) \mid 1\leq i \leq l\}.
\end{equation*}
\end{lemma}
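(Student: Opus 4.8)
The plan is to proceed exactly in parallel with the proof of Lemma \ref{realDI-2}, treating the two cases $r = l+1$ (maximally split, $\RANK=\SRANK$ is excluded here, so genuinely $r>l$) and $r > l+1$ separately, and reading off the Satake involution $p$ and the set $\varPsi(R_{0})$ from the Satake diagram of type DIII in Table \ref{table.satakelistclassical}. In the DIII Satake diagram the black nodes are the even-indexed fundamental roots $\alpha_{2}, \alpha_{4}, \ldots$ up to the relevant bound, so I expect $\varPsi(R_{0}) = \{\alpha_{2i} \mid 1 \leq i \leq l-1\} \cup \{\alpha_{l+k} \mid \ldots\}$ depending on the case, while the only white (non-black) fundamental roots are $\alpha_{2i-1}$ for $1 \leq i \leq l$ together possibly with the fork $\alpha_{r-1}, \alpha_{r}$ when $r$ is small; the Satake involution exchanges $\alpha_{r-1} \leftrightarrow \alpha_{r}$ in that small case and is trivial otherwise.

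The key computational step is to determine $\alpha^{\tilde\theta}_{2i-1}$ for each white node. Using that $\tilde\theta = -\theta$ permutes $R$ and fixes $R_0$ pointwise modulo $\OPE{Span}_{\BS Z}\varPsi(R_0)$, and that $\tilde\theta$ sends a fundamental white root $\alpha$ to $p\alpha$ plus a combination of black roots, I would pin down the black-root correction by the requirement that $\alpha^{\tilde\theta}_{2i-1}$ is an honest element of the root system $R$ of type $D_r$ (or the appropriate type). Concretely I expect to get
\begin{equation*}
\alpha^{\tilde\theta}_{2i-1} = \alpha_{2i-1} + \alpha_{2i} + \alpha_{2i+1}
\qquad (1 \leq i \leq l-1),
\end{equation*}
with a modified formula at $i = l$ involving the $D_r$-fork, namely $\alpha^{\tilde\theta}_{2l-1} = \alpha_{2l-1} + \alpha_{2l} + \cdots + \alpha_{r-2} + \alpha_{r-1} + \cdots$ arranged so as to land on the long "tail" root, and $\alpha^{\tilde\theta}_{2l+k} = -\alpha_{2l+k}$ for the black tail nodes. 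From these generator values, a direct (but routine) propagation computes $\alpha^{\tilde\theta}$ for an arbitrary positive root written as $\sum n_i \alpha_i$, and then $\{\alpha \in R \mid \alpha = \alpha^{\tilde\theta}\}$ is extracted by matching coefficients; I expect the fixed set to be precisely $\{\pm(\alpha_{2i-1} + \cdots + \alpha_{r-2} + \alpha_{2i} + \cdots + \alpha_{r}) \mid 1 \leq i \leq l\}$, matching Table \ref{table.realroots}. Since $\alpha$ is real iff $\theta(\alpha) = -\alpha$ iff $\alpha^{\tilde\theta} = \alpha$ (as recalled just before Theorem \ref{thm.realroots}), this identifies the real restricted roots.

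The main obstacle I anticipate is purely bookkeeping rather than conceptual: in type $D_r$ the branching at the fork $\{\alpha_{r-1}, \alpha_{r}\}$ means one must be careful about which white node survives near the end of the diagram and exactly which black roots enter the correction term when computing $\alpha^{\tilde\theta}$ at the boundary index $i = l$, and the shape of the answer (the "$\alpha_{r-2} + \alpha_{2i} + \cdots + \alpha_{r}$" pattern with its characteristic skip) has to be verified to be $\tilde\theta$-fixed by an explicit coefficient check. Because the paper states this lemma "by imitating the proof of Lemma \ref{realDI-2}," I would present the argument compactly: list the Satake data, state the generator formulas for $\alpha^{\tilde\theta}_{i}$ with the case split on $r$, and then assert that a direct calculation of $\{\alpha \in R \mid \alpha = \alpha^{\tilde\theta}\}$ yields the displayed set, exactly as in Lemmas \ref{realBI}, \ref{realCIII}, and \ref{realDI-2}.
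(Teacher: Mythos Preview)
Your overall approach is correct and matches the paper's: the proof is obtained by imitating Lemma~\ref{realDI-2}, reading off the Satake data for type DIII from Table~\ref{table.satakelistclassical}, computing $\alpha^{\tilde\theta}$ on the fundamental roots, and extracting the $\tilde\theta$-fixed roots. However, two systematic bookkeeping errors in your proposal would derail the computation.

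First, the parity of the black nodes is reversed. In the DIII Satake diagram of Table~\ref{table.satakelistclassical} the \emph{odd}-indexed fundamental roots $\alpha_{1},\alpha_{3},\ldots$ are black, not the even-indexed ones; compare the CIII diagram and the proof of Lemma~\ref{realCIII}, where the paper explicitly writes $\varPsi(R_{0})=\{\alpha_{2i-1}\mid 1\le i\le l\}$. Consequently the white nodes on which you must determine $\alpha^{\tilde\theta}$ are the even-indexed $\alpha_{2i}$, and the generator formula you should expect (exactly as in CIII) is $\alpha^{\tilde\theta}_{2i}=\alpha_{2i-1}+\alpha_{2i}+\alpha_{2i+1}$ for $1\le i\le l-1$, with a modified boundary formula at the fork.

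Second, the case split is inherited from DI but does not apply here. For DIII one has $l=[r/2]$, so the two Satake diagrams in Table~\ref{table.satakelistclassical} correspond to $r=2l$ (even: $\alpha_{r-1}$ black, $\alpha_{r}$ white, trivial Satake involution) and $r=2l+1$ (odd: $\alpha_{r-1},\alpha_{r}$ both white with $p\alpha_{r-1}=\alpha_{r}$), not to $r=l+1$ versus $r>l+1$. Once these two corrections are made, the direct calculation of $\{\alpha\in R\mid \alpha=\alpha^{\tilde\theta}\}$ yields precisely the set $\{\pm(\alpha_{2i-1}+\cdots+\alpha_{r-2}+\alpha_{2i}+\cdots+\alpha_{r})\mid 1\le i\le l\}$ as stated.
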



\begin{lemma}\label{realEII}
In the case where $(R, \theta)$ is of type EII,
the set of all real restricted roots of $R$ coincides with
\begin{align*}
\{&\pm\alpha_{2}, \pm\alpha_{4}, \pm(\alpha_{3}+\alpha_{4}+\alpha_{5}),
\pm(\alpha_{2}+\alpha_{4}),
\pm(\alpha_{2}+\alpha_{3}+\alpha_{4}+\alpha_{5})\}\\
&\cup\{\pm(\alpha_{2}+\alpha_{3}+2\alpha_{4}+\alpha_{5}),
\pm(\alpha_{1}+\alpha_{3}+\alpha_{4}+\alpha_{5}+\alpha_{6})\}\\
&\cup\{\pm(\alpha_{1}+\alpha_{2}+\alpha_{3}+\alpha_{4}+\alpha_{5}+\alpha_{6})\}\\
&\cup\{\pm(\alpha_{1}+\alpha_{2}+\alpha_{3}+2\alpha_{4}+\alpha_{5}+\alpha_{6}),
\pm(\alpha_{1}+\alpha_{2}+2\alpha_{3}+2\alpha_{4}+2\alpha_{5}+\alpha_{6})\}\\
&\cup\{\pm(\alpha_{1}+\alpha_{2}+2\alpha_{3}+3\alpha_{4}+2\alpha_{5}+\alpha_{6}),
\pm(\alpha_{1}+2\alpha_{2}+2\alpha_{3}+3\alpha_{4}+2\alpha_{5}+\alpha_{6})\}.
\end{align*}
\end{lemma}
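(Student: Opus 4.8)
The plan is to mirror the computational strategy already used in Lemmas \ref{realAIII}--\ref{realDIII-1}: read off the combinatorial data of the Satake involution from the Satake diagram of type EII, extend it to an action $\tilde\theta$ on all of $R$ by $\BS{Z}$-linearity modulo $\OPE{Span}_{\BS{Z}}\varPsi(R_{0})$ together with the constraint that $\tilde\theta$ must permute $R$, and then solve the fixed-point equation $\alpha^{\tilde\theta}=\alpha$ over the (reduced) root system $E_{6}$. First I would record that for type EII the underlying root system $R$ is of type $E_{6}$, the split rank is $l=4$, and from the Satake diagram $\varPsi(R_{0})=\emptyset$ while the Satake involution $p$ is the nontrivial diagram automorphism of $E_{6}$, so $p\alpha_{1}=\alpha_{6}$, $p\alpha_{3}=\alpha_{5}$, $p\alpha_{2}=\alpha_{2}$, $p\alpha_{4}=\alpha_{4}$ (with the Bourbaki labelling fixed by Table \ref{table.satakelistclassical}). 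Since $\varPsi(R_{0})=\emptyset$, Lemma \ref{lem.satakeinv} gives $\alpha_{i}^{\tilde\theta}=p\alpha_{i}$ exactly for all simple roots, and $\tilde\theta$ is therefore the linear extension of $p$.

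Next I would compute, for each $\alpha=\sum n_{i}\alpha_{i}\in R_{+}$, the image $\alpha^{\tilde\theta}=\sum n_{i}\,p\alpha_{i}$, which amounts to the coordinate swap $(n_{1},n_{2},n_{3},n_{4},n_{5},n_{6})\mapsto(n_{6},n_{2},n_{5},n_{4},n_{3},n_{1})$, and then impose $\alpha^{\tilde\theta}=\alpha$, i.e. $n_{1}=n_{6}$ and $n_{3}=n_{5}$. Running through the $36$ positive roots of $E_{6}$ (conveniently organized by height, or by the standard list of positive roots), the symmetric ones are precisely those appearing in the asserted union; the remaining positive roots come in genuine $p$-orbits of size $2$ and contribute nothing. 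Taking $\pm$ of each symmetric positive root yields the stated set, and the claim that $\alpha$ is real $\iff\theta(\alpha)=-\alpha\iff\alpha^{\tilde\theta}=\alpha$ is exactly the criterion established in Section \ref{sec.pre}. This identifies $\{\alpha\in R\mid \alpha^{\tilde\theta}=\alpha\}$ with the list in the statement.

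The only real work is bookkeeping: one must be sure the labelling of the $\alpha_{i}$ and the form of the diagram automorphism agree with the conventions of Table \ref{table.satakelistclassical}, and one must enumerate the positive roots of $E_{6}$ without omission so that no symmetric root is missed and no multiplicity is miscounted. I expect the main obstacle to be precisely this exhaustive-yet-routine verification over $E_{6}$ — there is no conceptual difficulty, but the count of $36$ positive roots and the tracking of which ones satisfy $n_{1}=n_{6}$, $n_{3}=n_{5}$ must be done carefully. Once that enumeration is carried out, the coincidence with the displayed set is immediate, completing the proof.
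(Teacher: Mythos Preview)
Your proposal is correct and follows essentially the same approach as the paper: read off $\varPsi(R_{0})=\emptyset$ and $p\alpha_{1}=\alpha_{6}$, $p\alpha_{3}=\alpha_{5}$, $p\alpha_{2}=\alpha_{2}$, $p\alpha_{4}=\alpha_{4}$ from the Satake diagram, deduce $\alpha_{i}^{\tilde\theta}=p\alpha_{i}$, reduce the real-root condition to $n_{1}=n_{6}$ and $n_{3}=n_{5}$, and then enumerate the positive roots of $E_{6}$ satisfying this. The paper's proof is identical in structure and content, only more terse.
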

\begin{proof}
From the Satake diagram of $(R, \theta)$
the Satake involution $p$ satisfies
$
p\alpha_{1}=\alpha_{6},
p\alpha_{2}=\alpha_{2},
p\alpha_{3}=\alpha_{5}
$
and $p\alpha_{4}=\alpha_{4}$, and $\varPsi(R_{0})=\emptyset$.
Therefore we have
$
\alpha_{1}^{\tilde{\theta}} = \alpha_{6},
\alpha_{2}^{\tilde{\theta}} = \alpha_{2},
\alpha_{3}^{\tilde{\theta}} = \alpha_{5}
$
and $\alpha^{\tilde{\theta}}_{4} = \alpha_{4}$.
If we put $\alpha = \sum^{6}_{i=1} n_{i}\alpha_{i} \in R$ then,
$\alpha = \alpha^{\tilde{\theta}}$ holds if and only if $\alpha$ satisfies
$n_{1}=n_{6}$ and $n_{3}=n_{5}$.
Therefore, by direct calculation
we can explicitly determine the set
$\{\alpha \in R \mid \alpha = \alpha^{\tilde{\theta}}\}$
as in the assertion. 
\end{proof}
\begin{lemma}\label{realEIII}
In the case where $(R, \theta)$ is of type EIII,
the set of all real restricted roots of $R$ coincides with
$\{
\pm(\alpha_{1}+\alpha_{3}+\alpha_{4}+\alpha_{5}+\alpha_{6})
\pm(\alpha_{1}+2\alpha_{2}+2\alpha_{3}+3\alpha_{4}+2\alpha_{5}+\alpha_{6})
\}$.
\end{lemma}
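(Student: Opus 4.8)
The plan is to argue exactly as in the proofs of Lemmas \ref{realAIII} and \ref{realEII}: extract from the Satake diagram of $(R,\theta)$ recorded in Table \ref{table.satakelistclassical} the action of $\tilde{\theta}$ on the fundamental system, propagate this action to all of $R$, and then run through the root system of type $E_{6}$ to single out the $\tilde{\theta}$-fixed roots, which by the discussion in Section \ref{sec.pre} are precisely the real restricted roots.

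First I would write down the combinatorial data. For type EIII the restricted root system $R$ is of type $E_{6}$ with fundamental system $\{\alpha_{1},\dots,\alpha_{6}\}$ as in Table \ref{table.satakelistclassical}, and the Satake involution $p$ of $\varPsi(R)\setminus\varPsi(R_{0})$ is induced by the nontrivial diagram automorphism of $E_{6}$, so that $p\alpha_{1}=\alpha_{6}$, $p\alpha_{3}=\alpha_{5}$, $p\alpha_{2}=\alpha_{2}$, $p\alpha_{4}=\alpha_{4}$; the set $\varPsi(R_{0})$ of black nodes is read off from the same table. For a black node $\alpha_{i}\in\varPsi(R_{0})$ one has $\theta(\alpha_{i})=\alpha_{i}$, hence $\alpha_{i}^{\tilde{\theta}}=-\alpha_{i}$. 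For a white node $\alpha_{i}$, Lemma \ref{lem.satakeinv} gives $\alpha_{i}^{\tilde{\theta}}\equiv p\alpha_{i}\ (\OPE{mod}\ \OPE{Span}_{\BS{Z}}\varPsi(R_{0}))$; since $\tilde{\theta}$ leaves $R$ invariant, there is exactly one root in this congruence class, and that root is $\alpha_{i}^{\tilde{\theta}}$. Carrying this out determines $\alpha_{i}^{\tilde{\theta}}$ explicitly for $i=1,\dots,6$.

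Next, for an arbitrary root $\alpha=\sum_{i=1}^{6}n_{i}\alpha_{i}$ (with the $n_{i}$ either all positive or all negative) I would compute $\alpha^{\tilde{\theta}}=\sum_{i=1}^{6}n_{i}\alpha_{i}^{\tilde{\theta}}$ and translate $\alpha=\alpha^{\tilde{\theta}}$ into linear constraints on $(n_{1},\dots,n_{6})$: a black simple root contributes to $\alpha^{\tilde{\theta}}$ only with the opposite sign, so fixedness forces a $p$-symmetry of the coefficients on the white nodes together with the vanishing of the corrections coming from $\varPsi(R_{0})$. Matching the admissible coefficient patterns against the $36$ positive roots of $E_{6}$ (the explicit list being standard, with highest root $\alpha_{1}+2\alpha_{2}+2\alpha_{3}+3\alpha_{4}+2\alpha_{5}+\alpha_{6}$) leaves precisely $\pm(\alpha_{1}+\alpha_{3}+\alpha_{4}+\alpha_{5}+\alpha_{6})$ and $\pm(\alpha_{1}+2\alpha_{2}+2\alpha_{3}+3\alpha_{4}+2\alpha_{5}+\alpha_{6})$, which is the asserted set. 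As a consistency check, the highest root of $E_{6}$ comes out $\tilde{\theta}$-fixed, in agreement with every other $E$-type line of Table \ref{table.realroots}.

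The step I expect to be the main obstacle is the bookkeeping at the start: identifying the correct set of black nodes for the pseudo-Riemannian type EIII from Table \ref{table.satakelistclassical}, and then computing the corrections $\alpha_{i}^{\tilde{\theta}}-p\alpha_{i}\in\OPE{Span}_{\BS{Z}}\varPsi(R_{0})$ without error, since a wrong node or a wrong correction would alter the final list. Once the six vectors $\alpha_{i}^{\tilde{\theta}}$ are pinned down correctly, the remaining verification over $E_{6}$ is entirely mechanical, just as in the proof of Lemma \ref{realEII}.
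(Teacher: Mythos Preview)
Your plan is exactly the paper's argument: read off the Satake data, compute $\alpha_i^{\tilde{\theta}}$ for each simple root, convert $\alpha=\alpha^{\tilde{\theta}}$ into linear constraints on the $n_i$, and scan the $E_6$ root list. One slip in your bookkeeping, though: in type EIII (Table~\ref{table.satakelistclassical}) the black nodes are $\varPsi(R_0)=\{\alpha_3,\alpha_4,\alpha_5\}$, so the Satake involution $p$ is defined only on $\{\alpha_1,\alpha_2,\alpha_6\}$ with $p\alpha_1=\alpha_6$ and $p\alpha_2=\alpha_2$; your assignments $p\alpha_3=\alpha_5$, $p\alpha_4=\alpha_4$ describe type EII rather than EIII. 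With the correct data one finds $\alpha_1^{\tilde{\theta}}=\alpha_3+\alpha_4+\alpha_5+\alpha_6$, $\alpha_2^{\tilde{\theta}}=\alpha_2+\alpha_3+2\alpha_4+\alpha_5$, $\alpha_6^{\tilde{\theta}}=\alpha_1+\alpha_3+\alpha_4+\alpha_5$, and the fixed-point constraints become $n_1=n_6$, $2n_3=2n_5=n_1+n_2+n_6$, $2n_4=n_1+2n_2+n_6$, which single out precisely the two pairs of roots in the statement --- just as you outlined.
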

\begin{proof}
From the Satake diagram of $(R, \theta)$
the Satake involution $p$ satisfies
$p\alpha_{1} = \alpha_{6}$ and $p\alpha_{2} = \alpha_{2}$,
and $\varPsi(R_{0})=\{\alpha_{3},\alpha_{4},\alpha_{5}\}$.
Therefore the possibility of the form $\alpha^{\tilde{\theta}}_{1}$
is either $\alpha_{6}, \alpha_{5}+\alpha_{6}, \alpha_{4}+\alpha_{5}+\alpha_{6}$
or $\alpha_{3}+\alpha_{4}+\alpha_{5}+\alpha_{6}$.
Since $\tilde{\theta}$ leaves the root system $R$ invariant,
we have $\alpha^{\tilde{\theta}}_{1} = \alpha_{3}+\alpha_{4}+\alpha_{5}+\alpha_{6}$.
The same argument shows $\alpha^{\tilde{\theta}}_{2} = \alpha_{2}+\alpha_{3}+2\alpha_{4}+\alpha_{5}$.
Since $\tilde{\theta}$ is involutive,
we have
$\alpha^{\tilde{\theta}}_{6} = \alpha_{1}+\alpha_{3}+\alpha_{4}+\alpha_{5}$.
If we put $\alpha = \sum^{6}_{i=1}n_{i}\alpha_{i} \in R$ then,
$\alpha=\alpha^{\tilde{\theta}}$ holds
if and only if
$\alpha$ satisfies
$n_{1}=n_{6}, n_{1}+n_{2}+n_{6}=2n_{3}, n_{1}+2n_{2}+n_{6}=2n_{4}$
and $n_{1}+n_{2}+n_{6}=2n_{5}$.
Therefore, by direct calculation
we can explicitly determine the set
$\{\alpha \in R \mid \alpha = \alpha^{\tilde{\theta}}\}$
as in the assertion.
\end{proof}
\begin{lemma}\label{realEIV}
In the case where $(R, \theta)$ is of type EIV,
there exists no real restricted root in $R$.
\end{lemma}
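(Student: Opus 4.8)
The plan is to argue as in the preceding exceptional cases (see the proofs of Lemmas \ref{realEII} and \ref{realEIII}): read off the combinatorial data of the Satake diagram of $(R,\theta)$ of type EIV from Table \ref{table.satakelistclassical}, determine the action of $\tilde{\theta}$ on the fundamental system $\varPsi(R)$, translate $\alpha=\alpha^{\tilde{\theta}}$ into a linear system in the coefficients of $\alpha$, and verify that no root of $E_{6}$ solves it.

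First I would record that for EIV the Satake involution $p$ is trivial and $\varPsi(R_{0})=\{\alpha_{2},\alpha_{3},\alpha_{4},\alpha_{5}\}$ (a subsystem of type $D_{4}$ with $\alpha_{4}$ the branch node), the white nodes being the endpoints $\alpha_{1}$ and $\alpha_{6}$. Hence $\alpha_{j}^{\tilde{\theta}}=-\alpha_{j}$ for $j=2,3,4,5$, while $\alpha_{1}^{\tilde{\theta}}\equiv\alpha_{1}$ and $\alpha_{6}^{\tilde{\theta}}\equiv\alpha_{6}$ modulo $\OPE{Span}_{\BS{Z}}\varPsi(R_{0})$. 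As before, the ambiguity here is removed by using that $\tilde{\theta}$ is involutive, leaves $R$ invariant, and maps $\alpha_{1},\alpha_{6}$ to positive roots: this forces $\alpha_{1}^{\tilde{\theta}}$ to be the highest root $\alpha_{1}+\alpha_{2}+2\alpha_{3}+2\alpha_{4}+\alpha_{5}$ of the $D_{5}$-subsystem supported on $\{\alpha_{1},\alpha_{2},\alpha_{3},\alpha_{4},\alpha_{5}\}$, and likewise $\alpha_{6}^{\tilde{\theta}}=\alpha_{2}+\alpha_{3}+2\alpha_{4}+2\alpha_{5}+\alpha_{6}$.

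Then, writing $\alpha=\sum_{i=1}^{6}n_{i}\alpha_{i}\in R$ and comparing coefficients, $\alpha=\alpha^{\tilde{\theta}}$ is equivalent to
\begin{equation*}
2n_{2}=n_{1}+n_{6},\qquad 2n_{3}=2n_{1}+n_{6},\qquad 2n_{4}=2n_{1}+2n_{6},\qquad 2n_{5}=n_{1}+2n_{6},
\end{equation*}
the coefficients of $\alpha_{1}$ and $\alpha_{6}$ matching automatically. The second and fourth equations force $n_{6}$ and $n_{1}$ respectively to be even; since every coefficient of a root of $E_{6}$ is bounded in absolute value by the corresponding coefficient of the highest root $\alpha_{1}+2\alpha_{2}+2\alpha_{3}+3\alpha_{4}+2\alpha_{5}+\alpha_{6}$, we get $|n_{1}|\le 1$ and $|n_{6}|\le 1$, hence $n_{1}=n_{6}=0$, and then all $n_{j}=0$. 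So $\alpha=0\notin R$, i.e.\ $R$ has no real restricted root.

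The only step requiring care is the determination of $\alpha_{1}^{\tilde{\theta}}$ and $\alpha_{6}^{\tilde{\theta}}$ in the second paragraph --- the same ``modulo $\varPsi(R_{0})$'' bookkeeping already handled in Lemmas \ref{realAII} and \ref{realEIII}; once it is settled, the conclusion follows from the short finite check above, so I do not expect a genuine obstacle here.
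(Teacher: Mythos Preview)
Your proof is correct and follows essentially the same approach as the paper: read off the Satake data for EIV, determine $\alpha_{1}^{\tilde{\theta}}$ and $\alpha_{6}^{\tilde{\theta}}$, and reduce $\alpha=\alpha^{\tilde{\theta}}$ to the same linear system in the $n_{i}$. Your endgame, using the parity of $n_{1},n_{6}$ together with the bound $|n_{1}|,|n_{6}|\le 1$ from the highest root of $E_{6}$, is a clean way to dispatch what the paper simply records as ``by direct calculation''.
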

\begin{proof}
From the Satake diagram of $(R, \theta)$
the Satake involution is trivial
and $\varPsi(R_{0}) = \{\alpha_{2},\alpha_{3},\alpha_{4},\alpha_{5}\}$.
Therefore the possibility of the form $\alpha^{\tilde{\theta}}_{1}$
is either
$
\alpha_{1},
\alpha_{1}+\alpha_{3},
\alpha_{1}+\alpha_{3}+\alpha_{4},
\alpha_{1}+\alpha_{2}+\alpha_{3}+\alpha_{4},
\alpha_{1}+\alpha_{3}+\alpha_{4}+\alpha_{5},
\alpha_{1}+\alpha_{2}+\alpha_{3}+\alpha_{4}+\alpha_{5},
\alpha_{1}+\alpha_{2}+\alpha_{3}+2\alpha_{4}+\alpha_{5}
$
or $\alpha_{1}+\alpha_{2}+2\alpha_{3}+2\alpha_{4}+\alpha_{5}$.
Since $\tilde{\theta}$ leaves the root system $R$ invariant,
we have $\alpha^{\tilde{\theta}}_{1} = \alpha_{1}+\alpha_{2}+2\alpha_{3}+2\alpha_{4}+\alpha_{5}$.
The same argument shows $\alpha^{\tilde{\theta}}_{6} = \alpha_{2}+\alpha_{3}+2\alpha_{4}+2\alpha_{5}+\alpha_{6}$.
If we put $\alpha = \sum^{6}_{i=1}n_{i}\alpha_{i} \in R$ then,
$\alpha = \alpha^{\tilde{\theta}}$ holds if and only if $\alpha$ satisfies
$
2n_{2} = n_{1}+n_{6},
2n_{3}=2n_{1}+n_{6},
n_{4}=n_{1}+n_{6}
$
and $2n_{5}=n_{1}+2n_{6}$.
Therefore, by direct calculation
we have 
$\{\alpha \in R \mid \alpha = \alpha^{\tilde{\theta}}\}=\emptyset$.
\end{proof}
\noindent
By imitating the proof of Lemma \ref{realEIV},
we have the following five facts.
\begin{lemma}\label{realEVI}
In the case where $(R, \theta)$ is of type EVI,
the set of all real restricted roots of $R$ coincides with
\begin{align*}
\{
&\pm \alpha_{1},
\pm \alpha_{3},
\pm(\alpha_{1}+\alpha_{3}),
\pm(\alpha_{2}+\alpha_{3}+2\alpha_{4}+\alpha_{5}),
\pm(\alpha_{1}+\alpha_{2}+\alpha_{3}+2\alpha_{4}+\alpha_{5})
\}\\
&\cup\{
\pm(\alpha_{1}+\alpha_{2}+2\alpha_{3}+2\alpha_{4}+\alpha_{5}),
\pm(\alpha_{1}+\alpha_{2}+2\alpha_{3}+2\alpha_{4}+2\alpha_{5}+2\alpha_{6}+\alpha_{7})
\}\\
&\cup\{
\pm(\alpha_{1}+\alpha_{2}+\alpha_{3}+2\alpha_{4}+2\alpha_{5}+2\alpha_{6}+\alpha_{7})\}\\
&\cup\{\pm(2\alpha_{1}+2\alpha_{2}+3\alpha_{3}+4\alpha_{4}+3\alpha_{5}+2\alpha_{6}+\alpha_{7})
\}\\
&\cup\{
\pm(\alpha_{1}+2\alpha_{2}+2\alpha_{3}+4\alpha_{4}+3\alpha_{5}+2\alpha_{6}+\alpha_{7})\}\\
&\cup\{\pm(\alpha_{1}+2\alpha_{2}+3\alpha_{3}+4\alpha_{4}+3\alpha_{5}+2\alpha_{6}+\alpha_{7})
\}\\
&\cup\{
\pm(\alpha_{2}+\alpha_{3}+2\alpha_{4}+2\alpha_{5}+2\alpha_{6}+\alpha_{7})
\}.
\end{align*}
\end{lemma}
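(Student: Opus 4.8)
The plan is to follow verbatim the strategy used for type EIV in Lemma \ref{realEIV}, the only difference being that here the underlying root system $R$ is of type $E_{7}$, hence has $126$ roots, $63$ of them positive. First I would read off, from the Satake diagram associated with $(R,\theta)$ of type EVI recorded in Table \ref{table.satakelistclassical}, the Satake involution $p$ of $\varPsi(R)\setminus\varPsi(R_{0})$ and the set $\varPsi(R_{0})$ of black nodes. For each $\alpha_{i}\in\varPsi(R_{0})$ one has at once $\alpha_{i}^{\tilde{\theta}}=-\theta(\alpha_{i})=-\alpha_{i}$. For $\alpha_{i}\in\varPsi(R)\setminus\varPsi(R_{0})$, Lemma \ref{lem.satakeinv} gives $\alpha_{i}^{\tilde{\theta}}\equiv p\alpha_{i}$ modulo the $\BS{Z}$-span of $\varPsi(R_{0})$, so $\alpha_{i}^{\tilde{\theta}}$ is $p\alpha_{i}$ plus some nonnegative integer combination of the roots in $\varPsi(R_{0})$; since $\tilde{\theta}=-\theta$ is an automorphism of $\MF{g}$ leaving $R$ invariant, $\alpha_{i}^{\tilde{\theta}}$ must itself be a root, and this requirement, together with the involutivity $(\tilde{\theta})^{2}=\OPE{id}$, pins down each $\alpha_{i}^{\tilde{\theta}}$ uniquely, exactly as $\alpha_{1}^{\tilde{\theta}}$ and $\alpha_{6}^{\tilde{\theta}}$ were determined in the proof of Lemma \ref{realEIV}.

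Once $\alpha_{i}^{\tilde{\theta}}$ is known for all $i=1,\dots,7$, I would write an arbitrary root as $\alpha=\sum_{i=1}^{7}n_{i}\alpha_{i}$, expand $\alpha^{\tilde{\theta}}=\sum_{i=1}^{7}n_{i}\alpha_{i}^{\tilde{\theta}}$ in the basis $\varPsi(R)$, and compare coefficients. This turns the condition $\alpha=\alpha^{\tilde{\theta}}$, equivalently $\theta(\alpha)=-\alpha$ (i.e.\ $\alpha$ real), into an explicit homogeneous linear system in $n_{1},\dots,n_{7}$, just as in the displayed systems in the proofs of Lemmas \ref{realEII}--\ref{realEIV}. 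It then remains to run through the $63$ positive roots of $E_{7}$, conveniently listed by height, retain those whose coordinate vector solves the system, and adjoin the negatives; the surviving $\pm$ pairs are precisely those in the statement.

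The only genuine work, hence the main obstacle, is twofold: (i) correctly extracting $(p,\varPsi(R_{0}))$ from the Satake diagram and resolving the $\varPsi(R_{0})$-ambiguity in each $\alpha_{i}^{\tilde{\theta}}$ --- one must check that the representative of $p\alpha_{i}$ chosen modulo $\varPsi(R_{0})$ really is a root of $E_{7}$ and is consistent with $\tilde{\theta}$ being an involution; and (ii) the finite but lengthy enumeration over $E_{7}$. Both steps are purely mechanical once the diagram is fixed, which is why the lemma follows ``by imitating the proof of Lemma \ref{realEIV}'' with no new idea. As an internal check one can verify that the resulting set is closed under addition within $R$ and stable under the reflection $s_{\alpha}$ for each of its members $\alpha$, since $\{\alpha\in R\mid\theta(\alpha)=-\alpha\}$ is a root subsystem of $R$.
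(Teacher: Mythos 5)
Your proposal is correct and follows exactly the paper's route: the paper proves this lemma simply ``by imitating the proof of Lemma \ref{realEIV}'', i.e.\ reading $p$ and $\varPsi(R_{0})=\{\alpha_{2},\alpha_{5},\alpha_{7}\}$ off the Satake diagram of type EVI, pinning down each $\alpha_{i}^{\tilde{\theta}}$ via root-invariance and involutivity of $\tilde{\theta}$, converting $\alpha=\alpha^{\tilde{\theta}}$ into a linear system in the coefficients $n_{1},\dots,n_{7}$, and enumerating the roots of $E_{7}$. Your added closure/reflection sanity check is a harmless bonus not present in the paper.
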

\begin{lemma}\label{realEVII}
In the case where $(R, \theta)$ is of type EVII,
the set of all real restricted roots of $R$ coincides with
\begin{equation*}
\{
\pm \alpha_{7},
\pm(\alpha_{2}+\alpha_{3}+2\alpha_{4}+2\alpha_{5}+2\alpha_{6}+\alpha_{7}),
\pm(2\alpha_{1}+2\alpha_{2}+3\alpha_{3}+4\alpha_{4}+3\alpha_{5}+2\alpha_{6}+\alpha_{7})
\}.
\end{equation*}
\end{lemma}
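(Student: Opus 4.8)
The plan is to argue exactly as in the proof of Lemma \ref{realEIV}. First I would read off the Satake diagram associated with $(R,\theta)$ of type EVII (as recorded in Table \ref{table.satakelistclassical}): this fixes the set $\varPsi(R_{0})$ of black simple roots and the Satake involution $p$ on $\varPsi(R)\setminus\varPsi(R_{0})$. For each white simple root $\alpha_{i}$, Lemma \ref{lem.satakeinv} gives $\alpha_{i}^{\tilde{\theta}}\equiv p\alpha_{i}$ modulo $\OPE{Span}_{\BS{Z}}\{\alpha\mid\alpha\in\varPsi(R_{0})\}$; the coefficients along the black roots are then pinned down uniquely by the requirement that $\alpha_{i}^{\tilde{\theta}}$ be an element of $R$, and, where this still leaves a choice, by the relations $(\tilde{\theta})^{2}=\OPE{id}$ and $\tilde{\theta}|_{R_{0}}=\OPE{id}$ — precisely as $\alpha_{1}^{\tilde{\theta}}$ and $\alpha_{6}^{\tilde{\theta}}$ were determined in Lemma \ref{realEIV}. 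This yields an explicit description of $\alpha_{i}\mapsto\alpha_{i}^{\tilde{\theta}}$ on simple roots, hence on all of $R$ by linearity.

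Next I would translate the condition $\alpha=\alpha^{\tilde{\theta}}$ for $\alpha=\sum_{i=1}^{7}n_{i}\alpha_{i}\in R$ into a system of linear equations in the integers $n_{1},\dots,n_{7}$, one relation for each nontrivial $\tilde{\theta}$-orbit of simple roots. Intersecting the solution subspace of this system with the set of positive roots of $E_{7}$ gives precisely the positive roots listed in the statement, and adjoining their negatives produces the asserted set. Since a restricted root $\alpha$ is real if and only if $\theta(\alpha)=-\alpha$, i.e.\ $\alpha=\alpha^{\tilde{\theta}}$, this completes the identification.

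I expect the only real work to be the bookkeeping in the second step: running the linear constraints against the $63$ positive roots of $E_{7}$ is finite but lengthy, and there is no conceptual obstacle beyond what Lemma \ref{realEIV} already illustrates. The practical safeguard against arithmetic slips is to verify that the resulting collection is stable under $\tilde{\theta}$ and under multiplication by $-1$, both of which it must be; the same check also serves to confirm that the coefficients $n_{i}$ appearing in the highest-root-type expressions for the $\alpha_{i}^{\tilde{\theta}}$ have been computed correctly.
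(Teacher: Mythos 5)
Your proposal follows exactly the paper's route: the paper proves this lemma by ``imitating the proof of Lemma \ref{realEIV}'', i.e.\ reading $\varPsi(R_{0})=\{\alpha_{2},\alpha_{3},\alpha_{4},\alpha_{5}\}$ and the trivial Satake involution off the EVII diagram, determining $\alpha_{1}^{\tilde{\theta}}$, $\alpha_{6}^{\tilde{\theta}}$, $\alpha_{7}^{\tilde{\theta}}$ from the requirement that they lie in $R$ together with involutivity, and then solving $\alpha=\alpha^{\tilde{\theta}}$ over the positive roots of $E_{7}$. One correction: since $\theta$ fixes $R_{0}$ pointwise, $\tilde{\theta}=-\theta$ acts as $-\OPE{id}$ on $R_{0}$, not as the identity as you wrote (compare $\alpha_{i}^{\tilde{\theta}}=-\alpha_{i}$ for black nodes in the AIII and CIII cases); with your sign the involutivity check $(\alpha_{1}^{\tilde{\theta}})^{\tilde{\theta}}=\alpha_{1}$ would reject the correct value $\alpha_{1}^{\tilde{\theta}}=\alpha_{1}+\alpha_{2}+2\alpha_{3}+2\alpha_{4}+\alpha_{5}$, so the sign must be fixed before running the bookkeeping, after which the computation does produce exactly the three listed pairs $\pm\alpha_{7}$, $\pm(\alpha_{2}+\alpha_{3}+2\alpha_{4}+2\alpha_{5}+2\alpha_{6}+\alpha_{7})$, $\pm(2\alpha_{1}+2\alpha_{2}+3\alpha_{3}+4\alpha_{4}+3\alpha_{5}+2\alpha_{6}+\alpha_{7})$.
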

\begin{lemma}\label{realEIX}
In the case where $(R, \theta)$ is of type EIX,
the set of all real restricted roots of $R$
coincides with
\begin{align*}
\{&\pm \alpha_{7},\pm \alpha_{8},\pm(\alpha_{7}+\alpha_{8}),\pm(\alpha_{2}+\alpha_{3}+2\alpha_{4}+2\alpha_{5}+2\alpha_{6}+\alpha_{7})\}\\
&\phantom{}\cup\{\pm(\alpha_{2}+\alpha_{3}+2\alpha_{4}+2\alpha_{5}+2\alpha_{6}+\alpha_{7}+\alpha_{8})\}\\
&\phantom{}\cup\{\pm(\alpha_{2}+\alpha_{3}+2\alpha_{4}+2\alpha_{5}+2\alpha_{6}+2\alpha_{7}+\alpha_{8})\}\\
&\phantom{}\cup\{\pm(2\alpha_{1}+2\alpha_{2}+3\alpha_{3}+4\alpha_{4}+3\alpha_{5}+2\alpha_{6}+\alpha_{7})\}\\
&\phantom{}\cup\{\pm(2\alpha_{1}+2\alpha_{2}+3\alpha_{3}+4\alpha_{4}+3\alpha_{5}+2\alpha_{6}+2\alpha_{7}+\alpha_{8})\}\\
&\phantom{}\cup\{\pm(2\alpha_{1}+2\alpha_{2}+3\alpha_{3}+4\alpha_{4}+3\alpha_{5}+2\alpha_{6}+\alpha_{7}+\alpha_{8})\}\\
&\phantom{}\cup\{\pm(2\alpha_{1}+3\alpha_{2}+4\alpha_{3}+6\alpha_{4}+5\alpha_{5}+4\alpha_{6}+2\alpha_{7}+\alpha_{8})\}\\
&\phantom{}\cup\{\pm(2\alpha_{1}+3\alpha_{2}+4\alpha_{3}+6\alpha_{4}+5\alpha_{5}+4\alpha_{6}+3\alpha_{7}+\alpha_{8})\}\\
&\phantom{}\cup\{\pm(2\alpha_{1}+3\alpha_{2}+4\alpha_{3}+6\alpha_{4}+5\alpha_{5}+4\alpha_{6}+3\alpha_{7}+2\alpha_{8})\}.
\end{align*}
\end{lemma}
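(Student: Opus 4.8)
The plan is to follow verbatim the strategy used for Lemma \ref{realEIV} (and for Lemmas \ref{realEII}, \ref{realEIII}, \ref{realEVI}, \ref{realEVII}): for type EIX the restricted root system $R$ is of type $E_{8}$, written in terms of the fundamental system $\varPsi(R)=\{\alpha_{1},\ldots,\alpha_{8}\}$ fixed in Table \ref{table.satakelistclassical}, and a restricted root $\alpha$ is real precisely when $\alpha^{\tilde{\theta}}=\alpha$, where $\tilde{\theta}=-\theta$ and $\alpha^{\tilde{\theta}}=-\theta(\alpha)$ (cf.\ the discussion preceding Theorem \ref{thm.realroots}). Hence the first step is to compute $\alpha_{i}^{\tilde{\theta}}$ for each fundamental root $\alpha_{i}$.

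From the Satake diagram of $(R,\theta)$ of type EIX recorded in Table \ref{table.satakelistclassical} I read off the set $\varPsi(R_{0})$ of black nodes and the Satake involution $p$ (the arrowed pairs of white nodes). For a black node $\alpha_{i}\in\varPsi(R_{0})$ we have $\theta(\alpha_{i})=\alpha_{i}$, hence $\alpha_{i}^{\tilde{\theta}}=-\alpha_{i}$. For a white node $\alpha_{i}\in\varPsi(R)\setminus\varPsi(R_{0})$, Lemma \ref{lem.satakeinv} gives $\alpha_{i}^{\tilde{\theta}}\equiv p\alpha_{i}\ (\OPE{mod}\OPE{Span}_{\BS{Z}}\varPsi(R_{0}))$, i.e.\ $\alpha_{i}^{\tilde{\theta}}=p\alpha_{i}+\gamma_{i}$ for some $\gamma_{i}\in\OPE{Span}_{\BS{Z}}\varPsi(R_{0})$. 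To pin $\gamma_{i}$ down I use two constraints, exactly as in the proof of Lemma \ref{realEIV}: first, $\tilde{\theta}$ preserves $R$, so $p\alpha_{i}+\gamma_{i}$ must be an actual root of $E_{8}$ lying in $(p\alpha_{i})+\OPE{Span}_{\BS{Z}}\varPsi(R_{0})$; second, $\tilde{\theta}$ is an involution, so the resulting assignment must square to the identity. Listing the finitely many candidates of the form $p\alpha_{i}$ plus a sum of consecutive black roots and discarding those that are not roots will single out $\alpha_{i}^{\tilde{\theta}}$ for each white node.

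Having the values $\alpha_{i}^{\tilde{\theta}}$ in hand, I extend $\tilde{\theta}$ linearly: for $\alpha=\sum_{i=1}^{8}n_{i}\alpha_{i}\in R$ the equation $\alpha=\alpha^{\tilde{\theta}}$ becomes a small linear system in $n_{1},\ldots,n_{8}$ (white-node coefficients paired by $p$, black-node coefficients forced to match the $\gamma_{i}$-contributions), entirely analogous to the conditions $n_{1}=n_{6}$, $n_{1}+n_{2}+n_{6}=2n_{3}$, etc.\ that appeared for EIII and EIV. The final step is then a direct but finite check: run through the $240$ roots of $E_{8}$ (equivalently the $120$ up to sign), keep those satisfying the linear system, and verify that the resulting set is precisely the list displayed in the statement.

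The main obstacle is purely computational and twofold. First, correctly determining the corrections $\gamma_{i}$: the ``must be a root'' test may leave genuine ambiguity that has to be resolved using involutivity of $\tilde{\theta}$ together with the precise shape of $\varPsi(R_{0})$ for EIX. Second, the bookkeeping of selecting the solutions among all $E_{8}$-roots is long and error-prone, since $E_{8}$ carries many roots of large height. Everything else is mechanical and parallels the earlier $E$-type lemmas, so once the $\alpha_{i}^{\tilde{\theta}}$ are nailed down the remainder is a routine ``by direct calculation'' verification.
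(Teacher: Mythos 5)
Your proposal is correct and follows essentially the same route as the paper: the paper's own proof of this lemma is literally ``by imitating the proof of Lemma \ref{realEIV}'', i.e.\ read $\varPsi(R_{0})=\{\alpha_{2},\alpha_{3},\alpha_{4},\alpha_{5}\}$ and the (trivial) Satake involution off the EIX diagram, determine $\alpha_{i}^{\tilde{\theta}}$ for the white nodes $\alpha_{1},\alpha_{6},\alpha_{7},\alpha_{8}$ by requiring that $\tilde{\theta}$ preserve $R$ and be involutive, and then solve the resulting linear conditions on the coefficients $n_{i}$ over the roots of $E_{8}$. The computational caveats you flag are real but are exactly the ``direct calculation'' the paper leaves implicit.
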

\begin{lemma}\label{realFII}
In the case where $(R, \theta)$ is of type FII,
the set of all real restricted roots of $R$
coincides with $\{
\pm(\alpha_{1}+2\alpha_{2}+3\alpha_{3}+2\alpha_{4})
\}$.
\end{lemma}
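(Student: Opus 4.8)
The plan is to follow the proof of Lemma~\ref{realEIV} verbatim; for type FII it is shorter because the Satake diagram has only one white node. First I would read off from the Satake diagram of $(R,\theta)$ recorded in Table~\ref{table.satakelistclassical} that $R$ is of type $F_4$ with fundamental system $\{\alpha_1,\alpha_2,\alpha_3,\alpha_4\}$, that the Satake involution $p$ is trivial (the $F_4$ diagram admits no nontrivial automorphism), and that $\varPsi(R_0)=\{\alpha_1,\alpha_2,\alpha_3\}$, so $\alpha_4$ is the unique white node and the split rank is $1$. Since $\theta$ fixes $R_0$ pointwise, $\alpha_i^{\tilde{\theta}}=-\alpha_i$ for $i=1,2,3$; and by Lemma~\ref{lem.satakeinv}, $\alpha_4^{\tilde{\theta}}$ is a positive root with $\alpha_4^{\tilde{\theta}}\equiv\alpha_4$ modulo $\OPE{Span}_{\BS{Z}}\varPsi(R_0)$, equivalently a positive root of $F_4$ whose $\alpha_4$-coefficient equals $1$.

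Next I would pin down $\alpha_4^{\tilde{\theta}}$. Because $\theta$ fixes $R_0$ and the split rank is $1$, the $(+1)$-eigenspace of $\theta$ on $\OPE{Span}_{\BS{R}}R$ is exactly the orthogonal complement of $\OPE{Span}_{\BS{R}}\varPsi(R_0)$, a one-dimensional real direction; hence $\tilde{\theta}=-\theta$ is the orthogonal involution that acts by $-\OPE{id}$ on $\OPE{Span}_{\BS{R}}\varPsi(R_0)$ and by $+\OPE{id}$ on that complement, which determines it completely. Computing $\tilde{\theta}(\alpha_4)$ from this description gives $\alpha_4^{\tilde{\theta}}=\alpha_1+2\alpha_2+3\alpha_3+\alpha_4$ (the only one of the candidate roots for which $\tilde{\theta}$ actually preserves $R$, exactly as in Lemma~\ref{realEIV}).

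Finally, writing $\alpha=\sum_{i=1}^{4}n_i\alpha_i\in R$, the identity $\alpha=\alpha^{\tilde{\theta}}=-n_1\alpha_1-n_2\alpha_2-n_3\alpha_3+n_4(\alpha_1+2\alpha_2+3\alpha_3+\alpha_4)$ is equivalent to $n_4=2n_1$, $n_2=2n_1$, $n_3=3n_1$, i.e.\ $\alpha=n_1(\alpha_1+2\alpha_2+3\alpha_3+2\alpha_4)$. Scanning the root system $F_4$, the only roots of this shape are $\pm(\alpha_1+2\alpha_2+3\alpha_3+2\alpha_4)$, the highest short root and its negative, which is the asserted set; by Corollary~\ref{cor.real} the $\OPE{Ad}(H)$-orbit through the corresponding restricted root vector is then austere in $\BS{S}$. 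The one delicate point is the identification of $\alpha_4^{\tilde{\theta}}$ among its candidates; once the Satake diagram is in hand this is a short finite check in $F_4$, and no ingredient beyond Lemma~\ref{lem.satakeinv} and Corollary~\ref{cor.real} is needed.
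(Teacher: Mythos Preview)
Your proof is correct and follows the same strategy the paper indicates (``by imitating the proof of Lemma~\ref{realEIV}''): read off $\varPsi(R_{0})=\{\alpha_{1},\alpha_{2},\alpha_{3}\}$ and the trivial Satake involution, determine $\alpha_{4}^{\tilde{\theta}}$, and solve the linear system $\alpha=\alpha^{\tilde{\theta}}$. One small slip: you write that ``the $(+1)$-eigenspace of $\theta$ on $\OPE{Span}_{\BS{R}}R$ is exactly the orthogonal complement of $\OPE{Span}_{\BS{R}}\varPsi(R_{0})$,'' but since $\theta$ fixes $R_{0}$ pointwise it is the $(+1)$-eigenspace of $\theta$ that \emph{equals} $\OPE{Span}_{\BS{R}}\varPsi(R_{0})$, while the $(-1)$-eigenspace is its one-dimensional orthogonal complement. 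Your very next sentence describing $\tilde{\theta}=-\theta$ is nonetheless correct, and everything downstream (in particular $\alpha_{4}^{\tilde{\theta}}=\alpha_{1}+2\alpha_{2}+3\alpha_{3}+\alpha_{4}$ and the resulting system $n_{4}=2n_{1}$, $n_{2}=2n_{1}$, $n_{3}=3n_{1}$) is right.

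The only methodological difference from the paper is that where Lemma~\ref{realEIV} pins down $\alpha_{i}^{\tilde{\theta}}$ by listing the candidate roots and eliminating those for which $\tilde{\theta}$ would fail to preserve $R$, you instead identify $\tilde{\theta}$ directly as the orthogonal reflection along $\OPE{Span}_{\BS{R}}\varPsi(R_{0})$ (legitimate here because the split rank is $1$, forcing the eigenspace dimensions to match). This shortcut is slightly cleaner in the rank-one situation; the paper's candidate-elimination argument is more uniform across the exceptional cases. The final appeal to Corollary~\ref{cor.real} is extraneous to the lemma itself but harmless.
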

\begin{lemma}\label{realFIII}
In the case where $(R, \theta)$ is of type FIII,
the set of all real restricted roots of $R$
coincides with
\begin{equation*}
\{
\pm(\alpha_{1}+\alpha_{2}+\alpha_{3}),
\pm(\alpha_{2}+2\alpha_{3}+2\alpha_{4}),
\pm(\alpha_{1}+2\alpha_{2}+3\alpha_{3}+2\alpha_{4}),
\pm(2\alpha_{1}+3\alpha_{2}+4\alpha_{3}+2\alpha_{4})
\}.
\end{equation*}
\end{lemma}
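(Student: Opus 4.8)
The plan is to follow verbatim the scheme used for Lemmas \ref{realEIV}--\ref{realEIX}, specialised to the root system $F_4$ with fundamental roots $\alpha_1,\alpha_2,\alpha_3,\alpha_4$ numbered as in Table \ref{table.satakelistclassical}. First I would extract from the Satake diagram associated with $(R,\theta)$ of type FIII, recorded in Table \ref{table.satakelistclassical}, the two needed pieces of combinatorial data: the set $\varPsi(R_0)$ of black vertices and the Satake involution $p$ of $\varPsi(R)\setminus\varPsi(R_0)$. Since $F_4$ has no nontrivial Dynkin diagram automorphism, $p$ is automatically trivial, so Lemma \ref{lem.satakeinv} yields $\alpha^{\tilde{\theta}}\equiv\alpha\ (\OPE{mod}\ \OPE{Span}_{\BS{Z}}\varPsi(R_0))$ for every $\alpha\in\varPsi(R)$, while $\alpha^{\tilde{\theta}}=-\alpha$ for $\alpha\in\varPsi(R_0)$. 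From the table one reads off $\varPsi(R_0)=\{\alpha_2,\alpha_3\}$.

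Next I would pin down $\alpha_i^{\tilde{\theta}}$ for the white fundamental roots $\alpha_1$ and $\alpha_4$. By the previous step each such image has the form $\alpha_i+\sum_{\alpha_j\in\varPsi(R_0)}c_j\alpha_j$, and among the finitely many vectors of this shape that lie in $R$ the requirement that $\tilde{\theta}$ be an involution of $R$ selects exactly one; here this forces $\alpha_1^{\tilde{\theta}}=\alpha_1+2\alpha_2+2\alpha_3$ and $\alpha_4^{\tilde{\theta}}=\alpha_2+2\alpha_3+\alpha_4$ (together with $\alpha_2^{\tilde{\theta}}=-\alpha_2$, $\alpha_3^{\tilde{\theta}}=-\alpha_3$), and checking $(\tilde{\theta})^{2}=\OPE{id}$ on each generator serves as a consistency test.

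Extending $\tilde{\theta}$ by linearity, for a root $\alpha=\sum_{i=1}^{4}n_i\alpha_i\in R$ I would compute $\alpha^{\tilde{\theta}}=\sum_{i}n_i\alpha_i^{\tilde{\theta}}$ and rewrite the condition $\alpha=\alpha^{\tilde{\theta}}$ as a linear system in the $n_i$; with the data above it collapses to $2n_2=2n_1+n_4$ and $n_3=n_1+n_4$, the $\alpha_1$- and $\alpha_4$-coefficients being preserved for free. As recalled in Section \ref{sec.pre}, a restricted root is real precisely when $\theta(\alpha)=-\alpha$, i.e.\ $\alpha=\alpha^{\tilde{\theta}}$, so the real restricted roots are exactly the solutions of this system lying in $R$. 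Running through the $24$ positive roots of $F_4$ then leaves $\alpha_1+\alpha_2+\alpha_3$, $\alpha_2+2\alpha_3+2\alpha_4$, $\alpha_1+2\alpha_2+3\alpha_3+2\alpha_4$ and $2\alpha_1+3\alpha_2+4\alpha_3+2\alpha_4$, which together with their negatives is the asserted list; as a final sanity check, these eight vectors form a closed rank-$2$ subsystem of $R$ (of type $B_2$), consistent with the split rank.

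The only genuinely delicate point is the determination of $\alpha_1^{\tilde{\theta}}$ and $\alpha_4^{\tilde{\theta}}$: it relies on correctly reading the type-FIII Satake diagram off Table \ref{table.satakelistclassical} (itself produced by the recipe of Appendix \ref{sec.appendix}) and on the remark that, among the few roots congruent to $p\alpha_i$ modulo the black lattice, only one is compatible with $\tilde{\theta}$ preserving $R$. Everything downstream is a bounded mechanical enumeration over the roots of $F_4$, exactly as in the proof of Lemma \ref{realEIV}.
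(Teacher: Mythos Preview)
Your proposal is correct and follows exactly the scheme the paper intends: since Lemma~\ref{realFIII} is stated without proof and grouped among the ``five facts'' obtained ``by imitating the proof of Lemma~\ref{realEIV}'', your reading of the FIII Satake diagram ($\varPsi(R_{0})=\{\alpha_{2},\alpha_{3}\}$, trivial Satake involution), your determination $\alpha_{1}^{\tilde{\theta}}=\alpha_{1}+2\alpha_{2}+2\alpha_{3}$, $\alpha_{4}^{\tilde{\theta}}=\alpha_{2}+2\alpha_{3}+\alpha_{4}$ via the constraint that $\tilde{\theta}$ preserve $R$, and the resulting linear system $2n_{2}=2n_{1}+n_{4}$, $n_{3}=n_{1}+n_{4}$ are precisely what that imitation produces. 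The enumeration over the $24$ positive roots of $F_{4}$ then yields the stated list, and your $B_{2}$ sanity check is a welcome extra.
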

\begin{lemma}\label{realanti}
In the case where $(R, \tilde{\theta})$ is of type
A$+$A, B$+$B, C$+$C, D$+$D, BC$+$BC, EI$+$EI,
EV$+$EV, EVIII$+$EVIII, FI$+$FI or G$+$G,
there exists no real restricted root in $R$.
\end{lemma}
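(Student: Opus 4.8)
The plan is to reduce the statement to the one structural feature common to all ten types, namely that the underlying restricted root system is an orthogonal sum $R = R' \sqcup R''$ of two isomorphic irreducible pieces which are interchanged by $\tilde{\theta}$; from this it is immediate that no root is $\tilde{\theta}$-fixed, and hence that there is no real restricted root.

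In detail, I would proceed as follows. For each of the types A$+$A, B$+$B, C$+$C, D$+$D, BC$+$BC, EI$+$EI, EV$+$EV, EVIII$+$EVIII, FI$+$FI and G$+$G, the restricted root system decomposes as $R = R' \sqcup R''$, where $R'$ and $R''$ are isomorphic irreducible root systems of the indicated common type, spanning mutually orthogonal subspaces $V' = \operatorname{Span}_{\BS{R}}R'$ and $V'' = \operatorname{Span}_{\BS{R}}R''$ of $(\MF{a}_{\BS{R}})^{*}$; moreover the associated Satake diagram (cf.\ Table \ref{table.satakelistclassical}, justified in Appendix \ref{sec.appendix}) is the disjoint union of the two Dynkin diagrams of $R'$ and $R''$, it has no black node, and its arrows pair each simple root of $\varPsi(R') =: \varPsi'$ with the corresponding simple root of $\varPsi(R'') =: \varPsi''$. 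In particular $\varPsi(R_{0}) = \emptyset$, so that by Lemma \ref{lem.satakeinv} one has $\alpha^{\tilde{\theta}} = p\alpha$ exactly for every $\alpha \in \varPsi(R)$, where the Satake involution $p$ restricts to a bijection $\varPsi' \to \varPsi''$. Extending by $\BS{R}$-linearity, $\tilde{\theta}$ then interchanges $V'$ and $V''$.

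Given this, the conclusion is immediate. Let $\alpha$ be any restricted root. Since $R = R' \sqcup R''$, either $\alpha \in R' \subset V'$ or $\alpha \in R'' \subset V''$, and in either case $\alpha^{\tilde{\theta}}$ lies in the complementary summand. Were $\alpha$ real, i.e.\ $\theta(\alpha) = -\alpha$, we would have $\alpha^{\tilde{\theta}} = \alpha$, forcing $\alpha \in V' \cap V'' = \{0\}$, which contradicts $\alpha \neq 0$. Hence $\{\alpha \in R \mid \alpha^{\tilde{\theta}} = \alpha\} = \emptyset$, i.e.\ $R$ contains no real restricted root. (The same orthogonality argument shows $\{\alpha \in R \mid \alpha^{\tilde{\theta}} = -\alpha\} = R_{0} = \emptyset$, so $R$ contains no imaginary restricted root either.)

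The deduction itself is essentially a one-line orthogonality argument and presents no obstacle; the only real content is the tabulation, i.e.\ checking that each of the ten types does carry a Satake diagram of the ``two Dynkin diagrams joined by arrows, no black node'' shape. That verification is exactly what is carried out in Appendix \ref{sec.appendix} and summarized in Table \ref{table.satakelistclassical}, so here it is merely invoked.
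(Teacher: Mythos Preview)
Your proof is correct and essentially identical to the paper's: both decompose $R$ into two isomorphic irreducible components, read off from the Satake diagram that $\varPsi(R_{0})=\emptyset$ with the Satake involution interchanging the two sets of simple roots, deduce that $\tilde{\theta}$ swaps the two components, and conclude that no root can satisfy $\alpha^{\tilde{\theta}}=\alpha$. Your phrasing via the orthogonal subspaces $V'$ and $V''$ is slightly more explicit than the paper's, and your parenthetical remark on imaginary roots is a correct bonus, but the argument is the same.
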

\begin{proof}
The restricted root system $R$ has two irreducible components
$R^{1}, R^{2}$,
which are isomorphic to each other.
Set $\varPsi(R^{j}) = \{\alpha^{j}_{1}, \ldots, \alpha^{j}_{r}\} (r=\RANK R^{j}, j=1,2)$.
Renumbering $\alpha^{j}_{i}$,
if necessary,
we assume that $\alpha^{j}_{1} > \cdots > \alpha^{j}_{r}$.
From the Satake diagram of $(R, \tilde{\theta})$
we have 
$\varPsi(R^{j}_{0})=\emptyset$ and
$p \alpha^{2}_{i} = \alpha^{1}_{i} (1 \leq i \leq r)$.
This implies that $(\alpha^{2}_{i})^{\tilde{\theta}} = \alpha^{1}_{i}
(1\leq i \leq r)$.
Since any restricted root in $R$ is a linear combination of either
$\{\alpha^{1}_{1}, \ldots, \alpha^{1}_{r}\}$ or
$\{\alpha^{2}_{1}, \ldots, \alpha^{2}_{r}\}$,
there exists no real restricted root.
\end{proof}

By using Corollary \ref{cor.real} and Theorem \ref{thm.realroots}
we have Theorem stated in Introduction.

\begin{remark}
By imitating our method we can give examples
of austere orbits in a pseudo-hyperbolic space
$\BS{H}$ ($:=\{ v \in \MF{q} \mid  B(v,v)=r (<0)\}$).
In fact,
for any imaginary root $\alpha$,
the orbit through $\sqrt{-1}A_{\alpha}$
is an austere orbit in $\BS{H}$.
Moreover,
the Dynkin diagram of
the subsystem $\{\alpha \in R \mid \theta(\alpha)=\alpha\}$
can be determined by the black circles in the Satake
diagram associated with $(R, \theta)$.
\end{remark}

\appendix
\section{Satake diagram of $(R, \theta)$}\label{sec.appendix}

Let $(\MF{g}, \MF{h})$
be a semisimple symmetric pair,
$\sigma$ be an involution of $\MF{g}$ with $\OPE{Ker}(\sigma - \OPE{id})=\MF{h}$,
and $\theta$ be a Cartan involution $\theta$ commuting with $\sigma$.
Denote by $R$ the restricted root system of
$(\MF{g}, \MF{h})$ with respect to a $\theta$-invariant MSCS $\MF{a}$ of $\MF{q}$.
In this appendix,
we determine the Satake diagram of $(R, \theta, \MF{a})$.
Set $\MF{g}^{d}=\MF{k}\cap\MF{h} + \sqrt{-1}(\MF{p}\cap\MF{h}) + \sqrt{-1}(\MF{k}\cap\MF{q})
+ \MF{p}\cap\MF{q}$,
which is a subalgebra of $\MF{g}^{\BS{C}}$.
The involutions $\sigma$ and $\theta$
induce involutions of $\MF{g}^{d}$,
which are also denoted by the same symbol $\sigma$ and $\theta$,
respectively.
In particular, $\sigma$ is a Cartan involution of $\MF{g}^{d}$.
Denoted by $\MF{k}^{d}$ (resp.\ $\MF{p}^{d}$)
the $(+1)$-eigenspace (resp.\ the $(-1)$-eigenspace)
of $\sigma$ in $\MF{g}^{d}$.
Then we have $\MF{a}_{\BS{R}}$ ($:=\sqrt{-1}(\MF{k}\cap\MF{a})+\MF{p}\cap\MF{a}$)
is a maximal abelian subspace of $\MF{p}^{d}$ ($=\sqrt{-1}(\MF{k}\cap\MF{q})+\MF{p}\cap\MF{q}$).
Note that $R$ give also the restricted root system of the Riemannian symmetric
pair $(\MF{g}^{d}, \MF{k}^{d})$ with respect to $\MF{a}_{\BS{R}}$.
Let $\MF{a}_{\MF{p}}$
be a maximal abelian subspace of $\MF{p}$ containing $\MF{p}\cap\MF{a}$.
Since $\MF{p}\cap\MF{a}$ is maximal in $\MF{p}\cap\MF{q}$,
we have $[\MF{a}, \MF{a}_{\MF{p}}]=\{0\}$ (cf.\ \cite[Lemma 2.4]{MR810638}).
If $\tilde{\MF{a}}$ is a maximal abelian subalgebra of $\MF{g}$
containing $\MF{a}$ and $\MF{a}_{\MF{p}}$,
then $\tilde{\MF{a}}$ is a Cartan subalgebra of $\MF{g}$.
Denote by $\Sigma$ the root system of $\MF{g}^{\BS{C}}$
with respect to $\tilde{\MF{a}}^{\BS{C}}$.
We can give a $(\theta, \sigma)$-fundamental system $\varPsi$ of $\Sigma$
(cf.\ \cite{MR810638} for the definition of a $(\theta, \sigma)$-fundamental system).
Therefore,
$\varPsi$ gives the Satake diagram of Riemannian symmetric pairs
$(\MF{g}, \MF{k})$ and $(\MF{g}^{d}, \MF{k}^{d})$,
which are denoted by $S(\MF{g},\MF{k})$ and $S(\MF{g}^{d}, \MF{k}^{d})$,
respectively.
Then we give a recipe to determine the Satake diagram of $(R, \theta, \MF{a})$
by using $S(\MF{g},\MF{k})$ and $S(\MF{g}^{d}, \MF{k}^{d})$
as follows.

\begin{recipe}
\label{recipe.satake}
Denote by $S(R, \theta, \MF{a})$ the Satake diagram of $(R, \theta, \MF{a})$.
\begin{enumerate}[(Step 1)]
\item For each $\alpha \in \varPsi$,
we determine $\sigma(\alpha)$ (resp.\ $\theta(\alpha)$)
by using $S(\MF{g},\MF{k})$ (resp.\ $S(\MF{g}^{d}, \MF{k}^{d})$).
\item We give the set
$\{\alpha \in \varPsi \mid \alpha|_{\MF{a}^{\BS{C}}} = 0\}$ ($=:\varPsi_{0}$), and
determine
the Dynkin diagram of $R$
by investigating $\{\alpha|_{\MF{a}^{\BS{C}}} \mid \alpha \in \varPsi\setminus\varPsi_{0}\}$ ($=:\overline{\varPsi}$).
In fact, we calculate $(\alpha - \sigma(\alpha))/2$ as $\alpha|_{\MF{a}^{\BS{C}}}$ for each $\alpha \in \varPsi$.
\item We determine $\{\lambda \in \overline{\varPsi} \mid \lambda|_{\MF{p}\cap\MF{a}}=0\}$ ($=:\overline{\varPsi}_{0}$)
by investigating $\{\alpha \in \varPsi \mid \alpha|_{\MF{a}_{\MF{p}}}=0\}$.
In fact, for each $\lambda = (\alpha - \sigma(\alpha))/2\ (\alpha \in \varPsi\setminus\varPsi_{0})$,
we determine whether or not $\alpha|_{\MF{a}_{\MF{p}}} = 0$ holds,
that is, $\alpha$ is a black circle in $S(\MF{g}, \MF{k})$.
Then the elements in $\overline{\varPsi}_{0}$ are black circles in $S(R, \theta, \MF{a})$.
\item For any
$\lambda_{1}, \lambda_{2} \in \overline{\varPsi}\setminus\overline{\varPsi}_{0}, \lambda_{1}\neq\lambda_{2}$,
we determine whether or not $\lambda_{1}|_{\MF{p}\cap\MF{a}}=\lambda_{2}|_{\MF{p}\cap\MF{a}}$ holds
by calculating $\lambda_{i}-\theta(\lambda_{i}) (i=1,2)$.
In fact, we calculate $(\alpha - \sigma(\alpha) - \theta(\alpha) + \sigma(\theta(\alpha))/4$
as $\alpha|_{\MF{a}^{\BS{C}}} - \theta(\alpha|_{\MF{a}^{\BS{C}}}$) ($\alpha \in \varPsi\setminus\varPsi_{0}$). 
If $\lambda_{1}|_{\MF{p}\cap\MF{a}}=\lambda_{2}|_{\MF{p}\cap\MF{a}}$ holds,
then $\lambda_{1}|_{\MF{p}\cap\MF{a}}, \lambda_{2}|_{\MF{p}\cap\MF{a}}$ are joined with $\leftrightarrow$.
\end{enumerate}
\end{recipe}

\noindent
By using Recipe \ref{recipe.satake} we shall list up the Satake diagrams of $(R,\theta)$
associated with MSCSs for all irreducible
pseudo-Riemannian symmetric pairs.
In Table \ref{table.satakeclassical}, we give the list of the irreducible pseudo-Riemannian
symmetric pairs and their types of $(R, \theta)$ associated with MSCSs.
In Table \ref{table.satakelistclassical}, we describe the Satake diagrams
of $(R,\theta)$.


\begin{table}[!!h]
\scriptsize
\renewcommand{\arraystretch}{1.1}
\caption{The Type of $(R, \theta)$}\label{table.satakeclassical}
\centering
\begin{minipage}{0.8\textwidth}
\small
(i-a) $\MF{g}$ is classical and $\MF{g}$ is noncompact simple with no complex structure.
\end{minipage}

\begin{tabular}{|c|c|c|c|}
\multicolumn{4}{l}{
$(\MF{g},\MF{h})=(\MF{su}(n,m),\MF{su}(i,j)+\MF{su}(n-i,m-j)+\MF{so}(2))$}\\
\hline
Type of $(R,\theta)$ & $\RANK$ & $\SRANK$ & Remarks \\
\hline
\hline
CI      & \multirow{2}{*}{$\min(i+j, m+n-(i+j))$}     & \multirow{2}{*}{$\min(i,m-j)+\min(j,n-i)$}      & $m+n = 2(i+j)$ \\\cline{1-1}\cline{4-4}
BI      &  &  & $m+n\neq 2(i+j)$\\\hline
\end{tabular}

\bigskip

\begin{tabular}{|c|c|c|c|}
\multicolumn{4}{l}{
$(\MF{g},\MF{h})=(\MF{so}(n,m),\MF{so}(i,j)+\MF{so}(n-i,m-j))$}\\
\hline
Type of $(R,\theta)$ & $\RANK$ & $\SRANK$ & Remarks \\
\hline
\hline
DI      & \multirow{2}{*}{$\min(i+j, m+n-(i+j))$}     & \multirow{2}{*}{$\min(i,m-j)+\min(j,n-i)$}      & $m+n = 2(i+j)$ \\\cline{1-1}\cline{4-4}
BI      &  &  & $m+n\neq 2(i+j)$\\\hline
\end{tabular}

\bigskip

\begin{tabular}{|c|c|c|c|}
\multicolumn{4}{l}{
$(\MF{g},\MF{h})=(\MF{sp}(n,m),\MF{sp}(i,j)+\MF{sp}(n-i,m-j))$}\\
\hline
Type of $(R,\theta)$ & $\RANK$ & $\SRANK$ & Remarks \\
\hline
\hline
CI      & \multirow{2}{*}{$\min(i+j, m+n-(i+j))$}     & \multirow{2}{*}{$\min(i,m-j)+\min(j,n-i)$}      & $m+n = 2(i+j)$ \\\cline{1-1}\cline{4-4}
BCI      &  &  & $m+n\neq 2(i+j)$\\\hline
\end{tabular}

\bigskip
\bigskip

\begin{tabular}{|l|c|c|c|c|} 
\hline
Symmetric pair $(\MF{g},\MF{h})$ & Type of $(R, \theta)$ & $\RANK$ & $\SRANK$ & Remarks\\
\hline
\hline
\LISTV{$(\MF{sl}(n,\BS{R}),\MF{so}(p,n-p))$}{AI}{$n-1$}{$n-1$}{}
\LISTV{$(\MF{su}(p,n-p),\MF{so}(p,n-p))$}{AIII}{$n-1$}{$\min(p,n-p)$}{}
\multirow{2}{*}{$(\MF{sl}(n,\BS{R}),\MF{sl}(p,\BS{R})+\MF{sl}(n-p,\BS{R})+\BS{R})$}
& CI & \multirow{2}{*}{$p$} & \multirow{2}{*}{$p$} & $n=2p$ \\\cline{2-2}\cline{5-5}
& BCI   &  &  & $n>2p$ \\\hline
\LISTV{$(\MF{sl}(2n,\BS{R}),\MF{sp}(n,\BS{R}))$}{AI}{$n-1$}{$n-1$}{}
\LISTV{$(\MF{su}^{*}(2n),\MF{so}^{*}(2n))$}{AII}{$2n-1$}{$n-1$}{}
\LISTV{$(\MF{su}(n,n),\MF{so}^{*}(2n))$}{AIII}{$2n-1$}{$n$}{}
\LISTV{$(\MF{sl}(2n,\BS{R}),\MF{sl}(n,\BS{C})+\MF{so}(2))$}{CI}{$n$}{$n$}{}
\LISTV{$(\MF{su}^{*}(2n),\MF{sl}(n,\BS{C})+\MF{so}(2))$}{CIII}{$n$}{$[n/2]$}{}
\LISTV{$(\MF{su}(n,n),\MF{sp}(n,\BS{R}))$}{AIII}{$n-1$}{$[n/2]$}{}
\LISTV{$(\MF{su}(n,n),\MF{sl}(n,\BS{C})+\BS{R})$}{CI}{$n$}{$n$}{}
\LISTV{$(\MF{su}^{*}(2n),\MF{sp}(p,n-p))$}{AI}{$n-1$}{$n-1$}{}
\LISTV{$(\MF{su}(2p,2(n-p)),\MF{sp}(p,n-p))$}{AIII}{$n-1$}{$\min(p,n-p)$}{}
\multirow{2}{*}{$(\MF{su}^{*}(2n),\MF{su}^{*}(2p)+\MF{su}^{*}(2(n-p))+\BS{R})$}
& CIII & \multirow{2}{*}{$2p$} & \multirow{2}{*}{$p$} & $n=2p$ \\\cline{2-2}\cline{5-5}
& BCIII   &  &  & $n>2p$ \\\hline
\multirow{2}{*}{$(\MF{so}^{*}(2n),\MF{su}(p,n-p)+\MF{so}(2))$}
& CI & \multirow{2}{*}{$[n/2]$} & \multirow{2}{*}{$[n/2]$} & $n$: even \\\cline{2-2}\cline{5-5}
& BCI   &  &  & $n$: odd \\\hline
\multirow{2}{*}{$(\MF{so}(2p,2(n-p)),\MF{su}(p,n-p)+\MF{so}(2))$}
& CI & \multirow{2}{*}{$[n/2]$} & \multirow{2}{*}{$\min(p,n-p)$} & $n$: even \\\cline{2-2}\cline{5-5}
& BCI   &  &  & $n$: odd \\\hline
\multirow{2}{*}{$(\MF{so}^{*}(2n),\MF{so}^{*}(2p)+\MF{so}^{*}(2(n-p)))$}
& DIII & \multirow{2}{*}{$2p$} & \multirow{2}{*}{$p$} & $n=2p$ \\\cline{2-2}\cline{5-5}
& BI  &  &  & $n>2p$ \\\hline
\LISTV{$(\MF{so}(n,n),\MF{so}(n,\BS{C}))$}{DI}{$n$}{$n$}{}
\LISTV{$(\MF{so}^{*}(2n),\MF{so}(n,\BS{C}))$}{DIII}{$n$}{$[n/2]$}{}

\multirow{2}{*}{$(\MF{so}(n,n),\MF{sl}(n,\BS{R})+\BS{R})$}
& CI & \multirow{2}{*}{$[n/2]$} & \multirow{2}{*}{$[n/2]$} & $n$: even \\\cline{2-2}\cline{5-5}
& BCI  &  &  & $n$: odd \\\hline
\end{tabular}
\end{table}

\begin{table}[!!h]
\scriptsize
\renewcommand{\arraystretch}{1.1}
\contcaption{(continued)}
\centering
\begin{tabular}{|l|c|c|c|c|} 
\hline
Symmetric pair $(\MF{g},\MF{h})$ & Type of $(R, \theta)$ & $\RANK$ & $\SRANK$ & Remarks\\
\hline
\hline
\LISTV{$(\MF{so}^{*}(4n),\MF{su}^{*}(2n)+\BS{R})$}{CI}{$n$}{$n$}{}
\LISTV{$(\MF{sp}(n,\BS{R}),\MF{su}(p,n-p)+\MF{so}(2))$}{CI}{$n$}{$n$}{}
\LISTV{$(\MF{sp}(p,n-p),\MF{su}(p,n-p)+\MF{so}(2))$}{CIII}{$n$}{$\min(p,n-p)$}{}
\multirow{2}{*}{$(\MF{sp}(n,\BS{R}),\MF{sp}(p,\BS{R})+\MF{sp}(n-p,\BS{R}))$}
& CI & \multirow{2}{*}{$p$} & \multirow{2}{*}{$p$} & $n=2p$ \\\cline{2-2}\cline{5-5}
& BI &  &  & $n>2p$ \\\hline
\LISTV{$(\MF{sp}(n,\BS{R}),\MF{sl}(n,\BS{R})+\BS{R})$}{CI}{$n$}{$n$}{}
\LISTV{$(\MF{sp}(n,n),\MF{sp}(n,\BS{C}))$}{CI}{$n$}{$n$}{}
\LISTV{$(\MF{sp}(2n,\BS{R}),\MF{sp}(n,\BS{C}))$}{CI}{$n$}{$n$}{}
\LISTV{$(\MF{sp}(n,n),\MF{su}^{*}(2n)+\BS{R})$}{CIII}{$2n$}{$n$}{}
\end{tabular}

\bigskip
\bigskip

\begin{minipage}{.8\textwidth}
\small
(i-b) $\MF{g}$ is classical and $\MF{g}$ is simple with a complex structure or the direct sum of two noncompact simple Lie algebras with no complex structure.
\end{minipage}
\begin{tabular}{|l|c|c|c|c|}
\hline
Symmetric pair $(\MF{g},\MF{h})$ & Type of $(R, \theta)$ & $\RANK$ & $\SRANK$ & Remarks\\
\hline
\hline
\LISTV{$(\MF{sl}(n,\BS{C}),\MF{sl}(n,\BS{R}))$}{AIII}{$n-1$}{$[n/2]$}{}
\LISTV{$(\MF{sl}(n,\BS{R})^{2},\MF{sl}(n,\BS{R}))$}{AI}{$n-1$}{$n-1$}{}
\LISTV{$(\MF{sl}(n,\BS{C}),\MF{so}(n,\BS{C}))$}{A$+$A}{$2(n-1)$}{$n-1$}{}
\LISTV{$(\MF{sl}(2n,\BS{C}),\MF{su}^{*}(2n))$}{AI}{$2n-1$}{$n$}{}
\LISTV{$(\MF{su}^{*}(2n)^{2},\MF{su}^{*}(2n))$}{AII}{$2n-1$}{$n-1$}{}
\LISTV{$(\MF{sl}(2n,\BS{C}),\MF{sp}(n,\BS{C}))$}{A$+$A}{$2(n-1)$}{$n-1$}{}
\LISTV{$(\MF{sl}(n,\BS{C}),\MF{su}(p,n-p))$}{AI}{$n-1$}{$n-1$}{}
\LISTV{$(\MF{su}(p,n-p)^{2},\MF{su}(p,n-p))$}{AIII}{$n-1$}{$\min(p,n-p)$}{}
\multirow{2}{*}{$(\MF{sl}(n,\BS{C}),\MF{sl}(p,\BS{C})+\MF{sl}(n-p,\BS{C})+\BS{C})$}
& C$+$C & \multirow{2}{*}{$2p$} & \multirow{2}{*}{$p$} & $n=2p$ \\\cline{2-2}\cline{5-5}
& BC$+$BC   &  &  & $n>2p$ \\\hline
\LISTV{$(\MF{so}(2n,\BS{C}),\MF{so}^{*}(2n))$}{DI}{$n$}{$n$}{}
\LISTV{$(\MF{so}^{*}(2n)^{2},\MF{so}^{*}(2n))$}{DIII}{$n$}{$[n/2]$}{}
\multirow{2}{*}{$(\MF{so}(2n,\BS{C}),\MF{sl}(n,\BS{C})+\BS{C})$}
& C$+$C & \multirow{2}{*}{$2[n/2]$} & \multirow{2}{*}{$[n/2]$} & $n$: even \\\cline{2-2}\cline{5-5}
& BC$+$BC   &  &  & $n$: odd \\\hline
\multirow{2}{*}{$(\MF{so}(n,\BS{C}),\MF{so}(p,n-p))$}
& DI & \multirow{2}{*}{$[n/2]$} & \multirow{2}{*}{$[n/2]$} & $n$: even \\\cline{2-2}\cline{5-5}
& BI   &  &  & $n$: odd \\\hline
\multirow{2}{*}{$(\MF{so}(p,n-p)^{2},\MF{so}(p,n-p))$}
& DI & \multirow{2}{*}{$[n/2]$} & \multirow{2}{*}{$\min(p,n-p)$} & $n$: even \\\cline{2-2}\cline{5-5}
& BI   &  &  & $n$: odd \\\hline
\multirow{2}{*}{$(\MF{so}(n,\BS{C}),\MF{so}(p,\BS{C})+\MF{so}(n-p,\BS{C}))$}
& D$+$D & \multirow{2}{*}{$2p$} & \multirow{2}{*}{$p$} & $n=2p$ \\\cline{2-2}\cline{5-5}
& B$+$B   &  &  & $n>2p$ \\\hline
\LISTV{$(\MF{sp}(n,\BS{C}),\MF{sp}(n,\BS{R}))$}{CI}{$n$}{$n$}{}
\LISTV{$(\MF{sp}(n,\BS{R})^{2},\MF{sp}(n,\BS{R}))$}{CI}{$n$}{$n$}{}
\LISTV{$(\MF{sp}(n,\BS{C}),\MF{sl}(n,\BS{C})+\BS{C})$}{C$+$C}{$2n$}{$n$}{}
\LISTV{$(\MF{sp}(n,\BS{C}),\MF{sp}(p,n-p))$}{CI}{$n$}{$n$}{}
\LISTV{$(\MF{sp}(p,n-p)^{2},\MF{sp}(p,n-p))$}{CIII}{$n$}{$\min(p,n-p)$}{}
\multirow{2}{*}{$(\MF{sp}(n,\BS{C}),\MF{sp}(p,\BS{C})+\MF{sp}(n-p,\BS{C}))$}
& C$+$C & \multirow{2}{*}{$2p$} & \multirow{2}{*}{$p$} & $n=2p$ \\\cline{2-2}\cline{5-5}
& BC$+$BC &  &  & $n>2p$ \\\hline
\end{tabular}
\end{table}


\begin{landscape}
\begin{table}[!!h]
\tiny
\renewcommand{\arraystretch}{1.1}
\contcaption{(continued)}
\centering
\begin{minipage}{0.9\textheight}
\small
(ii-a) $\MF{g}$ is exceptional and $\MF{g}$ is noncompact simple with no complex structure.
\end{minipage}
\setlength{\columnsep}{1pt}
\begin{multicols}{3}
\begin{tabular}{|l|c|c|c|}
\hline
\multirow{2}{*}{Symmetric pair $(\MF{g},\MF{h})$} & {\tiny Type of} & \multirow{2}{*}{$\RANK$} & \multirow{2}{*}{$\SRANK$} \\
&$(R,\theta)$ & & \\
\hline
\hline
\LISTIV{$(\MF{e}_{6(6)},\MF{sp}(4))$}{EI}{$6$}{$6$}
\LISTIV{$(\MF{e}_{6(6)},\MF{sp}(4,\BS{R}))$}{EI}{$6$}{$6$}
\LISTIV{$(\MF{e}_{6(6)},\MF{sl}(6,\BS{R})+\MF{sl}(2,\BS{R}))$}{FI}{$4$}{$4$}
\LISTIV{$(\MF{e}_{6(2)},\MF{sp}(4,\BS{R}))$}{EII}{$6$}{$4$}
\LISTIV{$(\MF{e}_{6(6)},\MF{sp}(2,2))$}{EI}{$6$}{$6$}
\LISTIV{$(\MF{e}_{6(6)},\MF{so}(5,5)+\BS{R})$}{BCI}{$2$}{$2$}
\LISTIV{$(\MF{e}_{6(-14)},\MF{sp}(2,2))$}{EIII}{$6$}{$2$}
\LISTIV{$(\MF{e}_{6(2)},\MF{su}(6)+\MF{su}(2))$}{FI}{$4$}{$4$}
\LISTIV{$(\MF{e}_{6(2)},\MF{su}(3,3)+\MF{sl}(2,\BS{R}))$}{FI}{$4$}{$4$}
\LISTIV{$(\MF{e}_{6(2)},\MF{su}(4,2)+\MF{su}(2))$}{FI}{$4$}{$4$}
\LISTIV{$(\MF{e}_{6(2)},\MF{so}(6,4)+\MF{so}(2))$}{BCI}{$2$}{$2$}
\LISTIV{$(\MF{e}_{6(-14)},\MF{su}(4,2)+\MF{su}(2))$}{FIII}{$4$}{$2$}
\LISTIV{$(\MF{e}_{6(-14)},\MF{so}(10)+\MF{so}(2))$}{BCI}{$2$}{$2$}
\LISTIV{$(\MF{e}_{6(-14)},\MF{so}^{*}(10)+\MF{so}(2))$}{BCI}{$2$}{$2$}
\LISTIV{$(\MF{e}_{6(-14)},\MF{su}(5,1)+\MF{sl}(2,\BS{R}))$}{FIII}{$4$}{$2$}
\LISTIV{$(\MF{e}_{6(2)},\MF{so}^{*}(10)+\MF{so}(2))$}{BCI}{$2$}{$2$}
\LISTIV{$(\MF{e}_{6(-14)},\MF{so}(8,2)+\MF{so}(2))$}{BCI}{$2$}{$2$}
\LISTIV{$(\MF{e}_{6(6)},\MF{f}_{4(4)})$}{AI}{$2$}{$2$}
\LISTIV{$(\MF{e}_{6(-26)},\MF{sp}(3,1))$}{EIV}{$6$}{$2$}
\LISTIV{$(\MF{e}_{6(6)},\MF{su}^{*}(6)+\MF{su}(2))$}{FI}{$4$}{$4$}
\LISTIV{$(\MF{e}_{6(2)},\MF{sp}(3,1))$}{EII}{$6$}{$4$}
\LISTIV{$(\MF{e}_{6(2)},\MF{f}_{4(4)})$}{AIII}{$2$}{$1$}
\LISTIV{$(\MF{e}_{6(-26)},\MF{su}^{*}(6)+\MF{su}(2))$}{FII}{$6$}{$1$}
\end{tabular}

\begin{tabular}{|l|c|c|c|}
\hline
\multirow{2}{*}{Symmetric pair $(\MF{g},\MF{h})$} & Type of & \multirow{2}{*}{$\RANK$} & \multirow{2}{*}{$\SRANK$} \\
&$(R,\theta)$ & & \\
\hline
\hline
\LISTIV{$(\MF{e}_{6(-26)},\MF{f}_{4})$}{AI}{$2$}{$2$}
\LISTIV{$(\MF{e}_{6(-26)},\MF{f}_{4(-20)})$}{AI}{$2$}{$2$}
\LISTIV{$(\MF{e}_{6(-26)},\MF{so}(9,1)+\BS{R})$}{BCIII}{$2$}{$1$}
\LISTIV{$(\MF{e}_{6(-14)},\MF{f}_{4(-20)})$}{AIII}{$2$}{$1$}
\LISTIV{$(\MF{e}_{7(7)},\MF{su}(8))$}{EV}{$7$}{$7$}
\LISTIV{$(\MF{e}_{7(7)},\MF{sl}(8,\BS{R}))$}{EV}{$7$}{$7$}
\LISTIV{$(\MF{e}_{7(7)},\MF{su}(4,4))$}{EV}{$7$}{$7$}
\LISTIV{$(\MF{e}_{7(7)},\MF{so}(6,6)+\MF{sl}(2,\BS{R}))$}{FI}{$4$}{$4$}
\LISTIV{$(\MF{e}_{7(-5)},\MF{su}(4,4))$}{EVI}{$7$}{$4$}
\LISTIV{$(\MF{e}_{7(7)},\MF{su}^{*}(8))$}{EV}{$7$}{$7$}
\LISTIV{$(\MF{e}_{7(7)},\MF{e}_{6(6)}+\BS{R})$}{CI}{$3$}{$3$}
\LISTIV{$(\MF{e}_{7(-25)},\MF{su}^{*}(8))$}{EVII}{$7$}{$3$}
\LISTIV{$(\MF{e}_{7(-5)},\MF{so}(12)+\MF{su}(2))$}{FI}{$4$}{$4$}
\LISTIV{$(\MF{e}_{7(-5)},\MF{so}^{*}(12)+\MF{sl}(2,\BS{R}))$}{FI}{$4$}{$4$}
\LISTIV{$(\MF{e}_{7(-5)},\MF{so}(8,4)+\MF{su}(2))$}{FI}{$4$}{$4$}
\LISTIV{$(\MF{e}_{7(-25)},\MF{e}_{6}+\MF{so}(2))$}{CI}{$3$}{$3$}
\LISTIV{$(\MF{e}_{7(-25)},\MF{e}_{6(-26)}+\BS{R})$}{CI}{$3$}{$3$}
\LISTIV{$(\MF{e}_{7(7)},\MF{e}_{6(2)}+\MF{so}(2))$}{CI}{$3$}{$3$}
\LISTIV{$(\MF{e}_{7(-25)},\MF{su}(6,2))$}{EVII}{$7$}{$3$}
\LISTIV{$(\MF{e}_{7(7)},\MF{so}^{*}(12)+\MF{su}(2))$}{FI}{$4$}{$4$}
\LISTIV{$(\MF{e}_{7(-5)},\MF{su}(6,2))$}{EVI}{$7$}{$4$}
\LISTIV{$(\MF{e}_{7(-5)},\MF{e}_{6(2)}+\MF{so}(2))$}{CI}{$7$}{$2$}
\LISTIV{$(\MF{e}_{7(-25)},\MF{so}^{*}(12)+\MF{su}(2))$}{FIII}{$4$}{$2$}
\end{tabular}

\begin{tabular}{|l|c|c|c|}
\hline
\multirow{2}{*}{Symmetric pair $(\MF{g},\MF{h})$} & Type of & \multirow{2}{*}{$\RANK$} & \multirow{2}{*}{$\SRANK$} \\
&$(R,\theta)$ & & \\
\hline
\hline
\LISTIV{$(\MF{e}_{7(-25)},\MF{e}_{6(-14)}+\MF{so}(2))$}{CI}{$3$}{$3$}
\LISTIV{$(\MF{e}_{7(-25)},\MF{so}(10,2)+\MF{sl}(2,\BS{R}))$}{FIII}{$4$}{$2$}
\LISTIV{$(\MF{e}_{7(-5)},\MF{e}_{6(-14)}+\MF{so}(2))$}{CI}{$3$}{$2$}
\LISTIV{$(\MF{e}_{8(8)},\MF{so}(16))$}{EVIII}{$8$}{$8$}
\LISTIV{$(\MF{e}_{8(8)},\MF{so}^{*}(16))$}{EVIII}{$8$}{$8$}
\LISTIV{$(\MF{e}_{8(8)},\MF{e}_{7(7)}+\MF{sl}(2,\BS{R}))$}{FI}{$4$}{$4$}
\LISTIV{$(\MF{e}_{8(-24)},\MF{so}^{*}(16))$}{EIX}{$8$}{$4$}
\LISTIV{$(\MF{e}_{8(8)},\MF{so}(8,8))$}{EVIII}{$8$}{$8$}
\LISTIV{$(\MF{e}_{8(-24)},\MF{e}_{7}+\MF{su}(2))$}{FI}{$4$}{$4$}
\LISTIV{$(\MF{e}_{8(-24)},\MF{e}_{7(-5)}+\MF{su}(2))$}{FI}{$4$}{$4$}
\LISTIV{$(\MF{e}_{8(-24)},\MF{so}(12,4))$}{EIX}{$8$}{$4$}
\LISTIV{$(\MF{e}_{8(8)},\MF{e}_{7(-5)}+\MF{su}(2))$}{FI}{$4$}{$4$}
\LISTIV{$(\MF{e}_{8(-24)},\MF{e}_{7(-25)}+\MF{sl}(2,\BS{R}))$}{FI}{$4$}{$4$}
\LISTIV{$(\MF{f}_{4(4)},\MF{sp}(3)+\MF{su}(2))$}{FI}{$4$}{$4$}
\LISTIV{$(\MF{f}_{4(4)},\MF{sp}(3,\BS{R})+\MF{sl}(2,\BS{R}))$}{FI}{$4$}{$4$}
\LISTIV{$(\MF{f}_{4(4)},\MF{sp}(2,1)+\MF{su}(2))$}{FI}{$4$}{$4$}
\LISTIV{$(\MF{f}_{4(4)},\MF{so}(5,4))$}{BCI}{$1$}{$1$}
\LISTIV{$(\MF{f}_{4(-20)},\MF{sp}(2,1)+\MF{su}(2))$}{FII}{$4$}{$1$}
\LISTIV{$(\MF{f}_{4(-20)},\MF{so}(9))$}{BCI}{$1$}{$1$}
\LISTIV{$(\MF{f}_{4(-20)},\MF{so}(8,1))$}{BCI}{$1$}{$1$}
\LISTIV{$(\MF{g}_{2(2)},\MF{su}(2)+\MF{su}(2))$}{G}{$2$}{$2$}
\LISTIV{$(\MF{g}_{2(2)},\MF{sl}(2,\BS{R})+\MF{sl}(2,\BS{R}))$}{G}{$2$}{$2$}
\end{tabular}
\end{multicols}
\end{table}
\end{landscape}

\begin{landscape}
\begin{table}[!!h]
\footnotesize
\renewcommand{\arraystretch}{1.1}
\contcaption{(continued)}
\centering
\begin{minipage}{\textwidth}
\small
(ii-b) $\MF{g}$ is exceptional and $\MF{g}$ is simple with a complex structure or the direct sum of two noncompact simple Lie algebras with no complex structure.
\end{minipage}
\begin{multicols}{2}
\begin{tabular}{|l|c|c|c|}
\hline
Symmetric pair $(\MF{g},\MF{h})$ & Type of $(R, \theta)$ & $\RANK$ & $\SRANK$ \\
\hline
\hline
\LISTIV{$(\MF{e}^{\BS{C}}_{6},\MF{e}_{6(-78)})$}{EI}{$6$}{$6$}
\LISTIV{$(\MF{e}^{\BS{C}}_{6},\MF{e}_{6(6)})$}{EII}{$6$}{$4$}
\LISTIV{$(\MF{e}_{6(6)}+\MF{e}_{6(6)},\MF{e}_{6(6)})$}{EI}{$6$}{$6$}
\LISTIV{$(\MF{e}^{\BS{C}}_{6},\MF{sp}(4,\BS{C}))$}{EI$+$EI}{$12$}{6}
\LISTIV{$(\MF{e}^{\BS{C}}_{6},\MF{e}_{6(2)})$}{EI}{$6$}{$6$}
\LISTIV{$(\MF{e}_{6(2)}+\MF{e}_{6(2)},\MF{e}_{6(2)})$}{EII}{$6$}{$4$}
\LISTIV{$(\MF{e}^{\BS{C}}_{6},\MF{sl}(6,\BS{C})+\MF{sl}(2,\BS{C}))$}{FI$+$FI}{$8$}{$4$}
\LISTIV{$(\MF{e}^{\BS{C}}_{6},\MF{e}_{6(-14)})$}{EI}{$6$}{$6$}
\LISTIV{$(\MF{e}_{6(-14)}+\MF{e}_{6(-14)},\MF{e}_{6(-14)})$}{EIII}{$6$}{$2$}
\LISTIV{$(\MF{e}^{\BS{C}}_{6},\MF{so}(10,\BS{C})+\BS{C})$}{BC$+$BC}{$4$}{$2$}
\LISTIV{$(\MF{e}^{\BS{C}}_{6},\MF{e}_{6(-26)})$}{EII}{$6$}{$4$}
\LISTIV{$(\MF{e}_{6(-26)}+\MF{e}_{6(-26)},\MF{e}_{6(-26)})$}{EIV}{$6$}{$2$}
\LISTIV{$(\MF{e}^{\BS{C}}_{6},\MF{f}^{\BS{C}}_{4})$}{A$+$A}{$4$}{$2$}
\LISTIV{$(\MF{e}^{\BS{C}}_{7},\MF{e}_{7(-33)})$}{EV}{$7$}{$7$}
\LISTIV{$(\MF{e}^{\BS{C}}_{7},\MF{e}_{7(7)})$}{EV}{$7$}{$7$}
\LISTIV{$(\MF{e}_{7(7)}+\MF{e}_{7(7)},\MF{e}_{7(7)})$}{EV}{$7$}{$7$}
\LISTIV{$(\MF{e}^{\BS{C}}_{7},\MF{sl}(8,\BS{C}))$}{EV$+$EV}{$14$}{$7$}
\LISTIV{$(\MF{e}^{\BS{C}}_{7},\MF{e}_{7(-5)})$}{EV}{$7$}{$7$}
\LISTIV{$(\MF{e}_{7(-5)}+\MF{e}_{7(-5)},\MF{e}_{7(-5)})$}{EVI}{$7$}{$4$}
\LISTIV{$(\MF{e}^{\BS{C}}_{7},\MF{so}(12,\BS{C})+\MF{sl}(2,\BS{C}))$}{FI$+$FI}{$8$}{$4$}
\end{tabular}

\begin{tabular}{|l|c|c|c|}
\hline
Symmetric pair $(\MF{g},\MF{h})$ & Type of $(R, \theta)$ & $\RANK$ & $\SRANK$ \\
\hline
\hline
\LISTIV{$(\MF{e}^{\BS{C}}_{7},\MF{e}_{7(-25)})$}{FI}{$7$}{$7$}
\LISTIV{$(\MF{e}_{7(-25)}+\MF{e}_{7(-25)},\MF{e}_{7(-25)})$}{EVII}{$7$}{$3$}
\LISTIV{$(\MF{e}^{\BS{C}}_{7},\MF{e}^{\BS{C}}_{6}+\BS{C})$}{C$+$C}{$6$}{$3$}
\LISTIV{$(\MF{e}^{\BS{C}}_{8},\MF{e}_{8(-248)})$}{EVIII}{$8$}{$8$}
\LISTIV{$(\MF{e}^{\BS{C}}_{8},\MF{e}_{8(8)})$}{EVIII}{$8$}{$8$}
\LISTIV{$(\MF{e}_{8(8)}+\MF{e}_{8(8)},\MF{e}_{8(8)})$}{EVIII}{$8$}{$8$}
\LISTIV{$(\MF{e}^{\BS{C}}_{8},\MF{so}(16,\BS{C}))$}{EVIII$+$EVIII}{$16$}{$8$}
\LISTIV{$(\MF{e}^{\BS{C}}_{8},\MF{e}_{8(-24)})$}{EVIII}{$8$}{$8$}
\LISTIV{$(\MF{e}_{8(-24)}+\MF{e}_{8(-24)},\MF{e}_{8(-24)})$}{EIX}{$8$}{$4$}
\LISTIV{$(\MF{e}^{\BS{C}}_{8},\MF{e}^{\BS{C}}_{7}+\MF{sl}(2,\BS{C}))$}{FI$+$FI}{$8$}{$4$}
\LISTIV{$(\MF{f}^{\BS{C}}_{4},\MF{f}_{4(-52)})$}{FI}{$4$}{$4$}
\LISTIV{$(\MF{f}^{\BS{C}}_{4},\MF{f}_{4(4)})$}{FI}{$4$}{$4$}
\LISTIV{$(\MF{f}_{4(4)}+\MF{f}_{4(4)},\MF{f}_{4(4)})$}{FI}{$4$}{$4$}
\LISTIV{$(\MF{f}^{\BS{C}}_{4},\MF{sp}(3,\BS{C})+\MF{sl}(2,\BS{C}))$}{FI$+$FI}{$8$}{$4$}
\LISTIV{$(\MF{f}^{\BS{C}}_{4},\MF{f}_{4(-20)})$}{FI}{$4$}{$4$}
\LISTIV{$(\MF{f}_{4(-20)}+\MF{f}_{4(-20)},\MF{f}_{4(-20)})$}{FII}{$4$}{$1$}
\LISTIV{$(\MF{f}^{\BS{C}}_{4},\MF{so}(9,\BS{C}))$}{BC$+$BC}{$2$}{$1$}
\LISTIV{$(\MF{g}^{\BS{C}}_{2},\MF{g}_{2(-14)})$}{G}{$2$}{$2$}
\LISTIV{$(\MF{g}^{\BS{C}}_{2},\MF{g}_{2(2)})$}{G}{$2$}{$2$}
\LISTIV{$(\MF{g}_{2(2)}+\MF{g}_{2(2)},\MF{g}_{2(2)})$}{G}{2}{2}
\LISTIV{$(\MF{g}^{\BS{C}}_{2},\MF{sl}(2,\BS{C})+\MF{sl}(2,\BS{C}))$}{G$+$G}{4}{2}
\end{tabular}
\end{multicols}
\end{table}
\end{landscape}

\begin{table}[!!h]
\renewcommand\arraystretch{1.1}
\tiny
\caption{The Satake diagram of $(R,\theta)$}\label{table.satakelistclassical}
\begin{multicols}{2}
\centering
\newcolumntype{C}{>{\centering\arraybackslash}X}
\begin{tabularx}{0.46\textwidth}{cC}
\hline
\hline
Type of & \multirow{2}{*}{Satake diagram} \\
$(R,\theta)$ & \\
\hline
\hline
\multirow{4}{*}{A$+$A} &
\multirow{4}{*}{
\begin{xy}
\ar@{-} (0,5)*+!D{\alpha_{1}}*\cir<3pt>{}="A";(5,5)*+!D{\alpha_{2}}*\cir<3pt>{}="B"
\ar@{-} "B";(8,5)
\ar@{.} (8,5);(10,5)
\ar@{-} (10,5);(13,5)*\cir<3pt>{}="C"
\ar@{-} "C";(18,5)*+!D{\alpha_{l}}*\cir<3pt>{}="D"
\ar@{-} (0,-5)*\cir<3pt>{}="AA";(5,-5)*\cir<3pt>{}="BB"
\ar@{-} "BB";(8,-5)
\ar@{.} (8,-5);(10,-5)
\ar@{-} (10,-5);(13,-5)*\cir<3pt>{}="CC"
\ar@{-} "CC";(18,-5)*\cir<3pt>{}="DD"
\ar@/_/@{<->} "A";"AA"
\ar@/_/@{<->} "B";"BB"
\ar@/_/@{<->} "C";"CC"
\ar@/_/@{<->} "D";"DD"
\end{xy}}\\\\\\\\\hline
\multirow{4}{*}{B$+$B} &
\multirow{4}{*}{
\begin{xy}
\ar@{-} (0,5)*+!D{\alpha_{1}}*\cir<3pt>{}="A";(5,5)*+!D{\alpha_{2}}*\cir<3pt>{}="B"
\ar@{-} "B";(8,5)
\ar@{.} (8,5);(10,5)
\ar@{-} (10,5);(13,5)*\cir<3pt>{}="C"
\ar@{=>} "C";(19,5)*+!D{\alpha_{l}}*\cir<3pt>{}="D"
\ar@{-} (0,-5)*\cir<3pt>{}="AA";(5,-5)*\cir<3pt>{}="BB"
\ar@{-} "BB";(8,-5)
\ar@{.} (8,-5);(10,-5)
\ar@{-} (10,-5);(13,-5)*\cir<3pt>{}="CC"
\ar@{=>} "CC";(19,-5)*\cir<3pt>{}="DD"
\ar@/_/@{<->} "A";"AA"
\ar@/_/@{<->} "B";"BB"
\ar@/_/@{<->} "C";"CC"
\ar@/_/@{<->} "D";"DD"
\end{xy}
}\\\\\\\\\hline
\multirow{5}{*}{BC$+$BC}&
\multirow{5}{*}{
\begin{xy}
\ar@{-} (0,5)*+!D{\alpha_{1}}*\cir<3pt>{}="A";(5,5)*+!D{\alpha_{2}}*\cir<3pt>{}="B"
\ar@{-} "B";(8,5)
\ar@{.} (8,5);(10,5)
\ar@{-} (10,5);(13,5)*\cir<3pt>{}="C"
\ar@{=>} "C";(19,5)*+!D{\alpha_{l}}*\cir<3pt>{}="D"
\ar@{-} (0,-5)*\cir<3pt>{}="AA";(5,-5)*\cir<3pt>{}="BB"
\ar@{-} "BB";(8,-5)
\ar@{.} (8,-5);(10,-5)
\ar@{-} (10,-5);(13,-5)*\cir<3pt>{}="CC"
\ar@{=>} "CC";(19,-5)*\cir<3pt>{}="DD"
\ar@/_/@{<->} "A";"AA"
\ar@/_/@{<->} "B";"BB"
\ar@/_/@{<->} "C";"CC"
\ar@/_/@{<->} "D";"DD"
\end{xy}
}\\\\\\\\\\\hline
\multirow{4}{*}{C$+$C}&
\multirow{4}{*}{
\begin{xy}
\ar@{-} (0,5)*+!D{\alpha_{1}}*\cir<3pt>{}="A";(5,5)*+!D{\alpha_{2}}*\cir<3pt>{}="B"
\ar@{-} "B";(8,5)
\ar@{.} (8,5);(10,5)
\ar@{-} (10,5);(13,5)*\cir<3pt>{}="C"
\ar@{<=} "C";(19,5)*+!D{\alpha_{l}}*\cir<3pt>{}="D"
\ar@{-} (0,-5)*\cir<3pt>{}="AA";(5,-5)*\cir<3pt>{}="BB"
\ar@{-} "BB";(8,-5)
\ar@{.} (8,-5);(10,-5)
\ar@{-} (10,-5);(13,-5)*\cir<3pt>{}="CC"
\ar@{<=} "CC";(19,-5)*\cir<3pt>{}="DD"
\ar@/_/@{<->} "A";"AA"
\ar@/_/@{<->} "B";"BB"
\ar@/_/@{<->} "C";"CC"
\ar@/_/@{<->} "D";"DD"
\end{xy}
}\\\\\\\\\hline
\multirow{7}{*}{D$+$D}&
\multirow{7}{*}{
\begin{xy}
\ar@{-} (0,5)*+!D{\alpha_{1}}*\cir<3pt>{}="A";(5,5)*+!D{\alpha_{2}}*\cir<3pt>{}="B"
\ar@{-} "B";(8,5)
\ar@{.} (8,5);(10,5)
\ar@{-} (10,5);(13,5)*\cir<3pt>{}="C"
\ar@{-} "C";(18,10)*\cir<3pt>{}="D"
\ar@{-} "C";(18,0)*+!L{\alpha_{l}}*\cir<3pt>{}="E"
\ar@{-} (0,-10)*\cir<3pt>{}="AA";(5,-10)*\cir<3pt>{}="BB"
\ar@{-} "BB";(8,-10)
\ar@{.} (8,-10);(10,-10)
\ar@{-} (10,-10);(13,-10)*\cir<3pt>{}="CC"
\ar@{-} "CC";(18,-5)*\cir<3pt>{}="DD"
\ar@{-} "CC";(18,-15)*\cir<3pt>{}="EE"
\ar@/_/@{<->} "A";"AA"
\ar@/_/@{<->} "B";"BB"
\ar@/_/@{<->} "C";"CC"
\ar@/^/@{<->} "D";"DD"
\ar@/^/@{<->} "E";"EE"
\end{xy}
}\\\\\\\\\\\\\\\hline
\multirow{2}{*}{AI} &
\multirow{2}{*}{
\begin{xy}
\ar@{} (0,0)*+!U{\alpha_{1}}*\cir<3pt>{}="A1"
\ar@{} (5,0)*+!U{\alpha_{2}}*\cir<3pt>{}="A2"
\ar@{} (13,0)*+!U{\alpha_{r-1}}*\cir<3pt>{}="Ar-1"
\ar@{} (18,0)*+!U{\alpha_{r}}*\cir<3pt>{}="Ar"
\ar@{-} "A1";"A2"
\ar@{-} "A2";(8,0)
\ar@{.} (8,0);(10,0)
\ar@{-} (10,0);"Ar-1"
\ar@{-} "Ar-1";"Ar"
\end{xy}
}\\\\\hline
\multirow{2}{*}{AII} &
\multirow{2}{*}{
\begin{xy}
\ar@{} (0,0)*+!U{\alpha_{1}}*\txt{\LB}="A1"
\ar@{} (5,0)*+!U{\alpha_{2}}*\cir<3pt>{}="A2"
\ar@{} (10,0)*+!U{}*\txt{\LB}="A3"
\ar@{} (18,0)*+!U{\alpha_{2l}}*\cir<3pt>{}="A2l"
\ar@{} (23,0)*+!U{\alpha_{r}}*\txt{\LB}="Ar"
\ar@{-} (0,0);"A2"
\ar@{-} "A2";(10,0)
\ar@{-} (10,0);(13,0)
\ar@{.} (13,0);(15,0)
\ar@{-} (15,0);"A2l"
\ar@{-} "A2l";(23,0)
\end{xy}
}\\\\\hline
\multirow{12}{*}{AIII} &
\multirow{12}{*}{
$\begin{cases}
\begin{xy}
\ar@{} (0,9)*+!D{\alpha_{1}}*\cir<3pt>{}="A1"
\ar@{} (5,9)*+!D{\alpha_{2}}*\cir<3pt>{}="A2"
\ar@{} (13,9)*+!D{\alpha_{l}}*\cir<3pt>{}="Al"
\ar@{} (18,9)*+!D{}*\txt{\LB}="Al+1"
\ar@{-} "A1";"A2"
\ar@{-} "A2";(8,9)
\ar@{.} (8,9);(10,9)
\ar@{-} (10,9);"Al"
\ar@{-} "Al";(18,9)
\ar@{} (18,4)*+!U{}*\txt{\LB}="Al+2"
\ar@{-} (18,9);(18,4)
\ar@{-} (18,4);(18,1)
\ar@{.} (18,1);(18,-1)
\ar@{-} (18,-1);(18,-4)
\ar@{-} (18,-4);(18,-9)
\ar@{} (18,-4)*+!U{}*\txt{\LB}="Ar-2"
\ar@{} (0,-9)*+!U{}*\cir<3pt>{}="AA1"
\ar@{} (5,-9)*+!U{}*\cir<3pt>{}="AA2"
\ar@{} (13,-9)*+!U{}*\cir<3pt>{}="AAl"
\ar@{} (18,-9)*+!U{}*\txt{\LB}="AAl+1"
\ar@{-} "AA1";"AA2"
\ar@{-} "AA2";(8,-9)
\ar@{.} (8,-9);(10,-9)
\ar@{-} (10,-9);"AAl"
\ar@{-} "AAl";(18,-9)
\ar@/_/@{<->} "A1";"AA1"
\ar@/_/@{<->} "A2";"AA2"
\ar@/_/@{<->} "Al";"AAl"
\end{xy}\\\\
\begin{xy}
\ar@{} (0,7)*+!D{\alpha_{1}}*\cir<3pt>{}="A1"
\ar@{} (5,7)*+!D{\alpha_{2}}*\cir<3pt>{}="A2"
\ar@{} (13,7)*+!D{\alpha_{l-1}}*\cir<3pt>{}="Al-1"
\ar@{-} "A1";"A2"
\ar@{-} "A2";(8,7)
\ar@{.} (8,7);(10,7)
\ar@{-} (10,7);"Al-1"
\ar@{} (0,-7)*+!U{}*\cir<3pt>{}="AA1"
\ar@{} (5,-7)*+!U{}*\cir<3pt>{}="AA2"
\ar@{} (13,-7)*+!U{}*\cir<3pt>{}="AAl-1"
\ar@{-} "AA1";"AA2"
\ar@{-} "AA2";(8,-7)
\ar@{.} (8,-7);(10,-7)
\ar@{-} (10,-7);"AAl-1"
\ar@{} (17,0)*+!DL{\alpha_{l}}*\cir<3pt>{}="Al"
\ar@{-} "Al-1";"Al"
\ar@{-} "AAl-1";"Al"
\ar@/_/@{<->} "A1";"AA1"
\ar@/_/@{<->} "A2";"AA2"
\ar@/_/@{<->} "Al-1";"AAl-1"
\end{xy}
\end{cases}$
}\\\\\\\\\\\\\\\\\\\\\\\\\hline
\multirow{2}{*}{BI} &
\multirow{2}{*}{
\begin{xy}
\ar@{} (0,0)*+!U{\alpha_{1}}*\cir<3pt>{}="A1"
\ar@{} (5,0)*+!U{\alpha_{2}}*\cir<3pt>{}="A2"
\ar@{} (13,0)*+!U{\alpha_{l}}*\cir<3pt>{}="Al"
\ar@{} (18,0)*+!U{}*\txt{\LB}="Al+1"
\ar@{} (26,0)*+!U{}*\txt{\LB}="Ar-1"
\ar@{} (32,0)*+!U{\alpha_{r}}*\txt{\LB}="Ar"
\ar@{-} "A1";"A2"
\ar@{-} "A2";(8,0)
\ar@{.} (8,0);(10,0)
\ar@{-} (10,0);"Al"
\ar@{-} "Al";(18,0)
\ar@{-} (18,0);(21,0)
\ar@{.} (21,0);(23,0)
\ar@{-} (23,0);(26,0)
\ar@{=>} (26,0);(31,0)
\end{xy}
}\\\\\hline
\multirow{2}{*}{BCI} &
\multirow{2}{*}{
\begin{xy}
\ar@{} (0,0)*+!U{\alpha_{1}}*\cir<3pt>{}="A1"
\ar@{} (5,0)*+!U{\alpha_{2}}*\cir<3pt>{}="A2"
\ar@{} (13,0)*+!U{\alpha_{l}}*\cir<3pt>{}="Al"
\ar@{} (18,0)*+!U{}*\txt{\LB}="Al+1"
\ar@{} (26,0)*+!U{}*\txt{\LB}="Ar-1"
\ar@{} (32,0)*+!U{\alpha_{r}}*\txt{\LB}="Ar"
\ar@{-} "A1";"A2"
\ar@{-} "A2";(8,0)
\ar@{.} (8,0);(10,0)
\ar@{-} (10,0);"Al"
\ar@{-} "Al";(18,0)
\ar@{-} (18,0);(21,0)
\ar@{.} (21,0);(23,0)
\ar@{-} (23,0);(26,0)
\ar@{=>} (26,0);(31,0)
\end{xy}
}\\\\\hline
\end{tabularx}

\begin{tabularx}{0.46\textwidth}{cC}
\hline
\hline
Type of & \multirow{2}{*}{Satake diagram} \\
$(R,\theta)$ & \\
\hline
\hline
\multirow{2}{*}{BCII} &
\multirow{2}{*}{
\begin{xy}
\ar@{} (0,0)*+!U{\alpha_{1}}*\cir<3pt>{}="A1"
\ar@{} (5,0)*+!U{}*\txt{\LB}="A2"
\ar@{} (13,0)*+!U{}*\txt{\LB}="Ar-1"
\ar@{} (19,0)*+!U{\alpha_{r}}*\txt{\LB}="Ar"
\ar@{-} "A1";(5,0)
\ar@{-} (5,0);(8,0)
\ar@{.} (8,0);(10,0)
\ar@{-} (10,0);(13,0)
\ar@{=>} (13,0);(18,0)
\end{xy}
}\\\\\hline
\multirow{6}{*}{BCIII} &
\multirow{6}{*}{
$\begin{cases} 
\begin{xy}
\ar@{} (0,0)*+!U{\alpha_{1}}*\txt{\LB}="A1"
\ar@{} (5,0)*+!U{\alpha_{2}}*\cir<3pt>{}="A2"
\ar@{} (10,0)*+!U{\alpha_{3}}*\txt{\LB}="A3"
\ar@{} (18,0)*+!U{\alpha_{2l}}*\cir<3pt>{}="A2l"
\ar@{} (23,0)*+!U{}*\txt{\LB}="A2l+1"
\ar@{} (28,0)*+!U{}*\txt{\LB}="A2l+2"
\ar@{} (36,0)*+!U{}*\txt{\LB}="Ar-1"
\ar@{} (42,0)*+!U{\alpha_{r}}*\txt{\LB}="Ar"
\ar@{-} (0,0);"A2"
\ar@{-} "A2";(10,0)
\ar@{-} (10,0);(13,0)
\ar@{.} (13,0);(15,0)
\ar@{-} (15,0);"A2l"
\ar@{-} "A2l";(23,0)
\ar@{-} (23,0);(28,0)
\ar@{-} (28,0);(31,0)
\ar@{.} (31,0);(33,0)
\ar@{-} (33,0);(36,0)
\ar@{=>} (36,0);(41,0)
\end{xy}\\\\
\begin{xy}
\ar@{} (0,0)*+!U{\alpha_{1}}*\txt{\LB}="A1"
\ar@{} (5,0)*+!U{\alpha_{2}}*\cir<3pt>{}="A2"
\ar@{} (10,0)*+!U{}*\txt{\LB}="A3"
\ar@{} (18,0)*+!U{}*\cir<3pt>{}="A2l-2"
\ar@{} (23,0)*+!U{}*\txt{\LB}="A2l-1"
\ar@{} (29,0)*+!U{\alpha_{2l}}*\cir<3pt>{}="A2l"
\ar@{-} (0,0);"A2"
\ar@{-} "A2";(10,0)
\ar@{-} (10,0);(13,0)
\ar@{.} (13,0);(15,0)
\ar@{-} (15,0);"A2l-2"
\ar@{-} "A2l-2";(23,0)
\ar@{=>} (23,0);"A2l"
\end{xy}
\end{cases}$
}\\\\\\\\\\\\\hline
\multirow{2}{*}{CI} &
\multirow{2}{*}{
\begin{xy}
\ar@{} (0,0)*+!U{\alpha_{1}}*\cir<3pt>{}="A1"
\ar@{} (5,0)*+!U{\alpha_{2}}*\cir<3pt>{}="A2"
\ar@{} (13,0)*+!U{\alpha_{l}}*\cir<3pt>{}="Al"
\ar@{} (18,0)*+!U{}*\txt{\LB}="Al+1"
\ar@{} (26,0)*+!U{}*\txt{\LB}="Ar-1"
\ar@{} (32,0)*+!U{\alpha_{r}}*\txt{\LB}="Ar"
\ar@{-} "A1";"A2"
\ar@{-} "A2";(8,0)
\ar@{.} (8,0);(10,0)
\ar@{-} (10,0);"Al"
\ar@{-} "Al";(18,0)
\ar@{-} (18,0);(21,0)
\ar@{.} (21,0);(23,0)
\ar@{-} (23,0);(26,0)
\ar@{<=} (27,0);(32,0)
\end{xy}
}\\\\\hline
\multirow{2}{*}{CII} &
\multirow{2}{*}{
\begin{xy}
\ar@{} (0,0)*+!U{\alpha_{1}}*\cir<3pt>{}="A1"
\ar@{} (5,0)*+!U{}*\txt{\LB}="A2"
\ar@{} (13,0)*+!U{}*\txt{\LB}="Ar-1"
\ar@{} (19,0)*+!U{\alpha_{r}}*\txt{\LB}="Ar"
\ar@{-} "A1";(5,0)
\ar@{-} (5,0);(8,0)
\ar@{.} (8,0);(10,0)
\ar@{-} (10,0);(13,0)
\ar@{<=} (14,0);(19,0)
\end{xy}
}\\\\\hline
\multirow{6}{*}{CIII} &
\multirow{6}{*}{
$\begin{cases} 
\begin{xy}
\ar@{} (0,0)*+!U{\alpha_{1}}*\txt{\LB}="A1"
\ar@{} (5,0)*+!U{\alpha_{2}}*\cir<3pt>{}="A2"
\ar@{} (10,0)*+!U{\alpha_{3}}*\txt{\LB}="A3"
\ar@{} (18,0)*+!U{\alpha_{2l}}*\cir<3pt>{}="A2l"
\ar@{} (23,0)*+!U{}*\txt{\LB}="A2l+1"
\ar@{} (28,0)*+!U{}*\txt{\LB}="A2l+2"
\ar@{} (36,0)*+!U{}*\txt{\LB}="Ar-1"
\ar@{} (42,0)*+!U{\alpha_{r}}*\txt{\LB}="Ar"
\ar@{-} (0,0);"A2"
\ar@{-} "A2";(10,0)
\ar@{-} (10,0);(13,0)
\ar@{.} (13,0);(15,0)
\ar@{-} (15,0);"A2l"
\ar@{-} "A2l";(23,0)
\ar@{-} (23,0);(28,0)
\ar@{-} (28,0);(31,0)
\ar@{.} (31,0);(33,0)
\ar@{-} (33,0);(36,0)
\ar@{<=} (37,0);(42,0)
\end{xy}\\\\
\begin{xy}
\ar@{} (0,0)*+!U{\alpha_{1}}*\txt{\LB}="A1"
\ar@{} (5,0)*+!U{\alpha_{2}}*\cir<3pt>{}="A2"
\ar@{} (10,0)*+!U{}*\txt{\LB}="A3"
\ar@{} (18,0)*+!U{}*\cir<3pt>{}="A2l-2"
\ar@{} (23,0)*+!U{}*\txt{\LB}="A2l-1"
\ar@{} (29,0)*+!U{\alpha_{2l}}*\cir<3pt>{}="A2l"
\ar@{-} (0,0);"A2"
\ar@{-} "A2";(10,0)
\ar@{-} (10,0);(13,0)
\ar@{.} (13,0);(15,0)
\ar@{-} (15,0);"A2l-2"
\ar@{-} "A2l-2";(23,0)
\ar@{<=} (24,0);"A2l"
\end{xy}
\end{cases}$
}\\\\\\\\\\\\\hline
\multirow{12}{*}{DI} &
\multirow{12}{*}{
$\begin{cases}
\begin{xy}
\ar@{} (0,0)*+!U{\alpha_{1}}*\cir<3pt>{}="A1"
\ar@{} (8,0)*+!U{\alpha_{l-2}}*\cir<3pt>{}="Al-2"
\ar@{} (12,6)*+!L{\alpha_{l-1}}*\cir<3pt>{}="Al-1"
\ar@{} (12,-6)*+!L{\alpha_{l}}*\cir<3pt>{}="Al"
\ar@{-} "A1";(3,0)
\ar@{.} (3,0);(5,0)
\ar@{-} (5,0);"Al-2"
\ar@{-} "Al-2";"Al-1"
\ar@{-} "Al-2";"Al"
\end{xy}
\\\\
\begin{xy}
\ar@{} (0,0)*+!U{\alpha_{1}}*\cir<3pt>{}="A1"
\ar@{} (8,0)*+!U{\alpha_{l-1}}*\cir<3pt>{}="Al-1"
\ar@{} (12,6)*+!L{\alpha_{l}}*\cir<3pt>{}="Al"
\ar@{} (12,-6)*+!L{\alpha_{l+1}}*\cir<3pt>{}="Al+1"
\ar@{-} "A1";(3,0)
\ar@{.} (3,0);(5,0)
\ar@{-} (5,0);"Al-1"
\ar@{-} "Al-1";"Al"
\ar@{-} "Al-1";"Al+1"
\ar@/^/@{<->} "Al";"Al+1"
\end{xy}\\\\
\begin{xy}
\ar@{} (0,0)*+!U{\alpha_{1}}*\cir<3pt>{}="A1"
\ar@{} (5,0)*+!U{}*\cir<3pt>{}="A2"
\ar@{} (13,0)*+!U{\alpha_{l}}*\cir<3pt>{}="Al"
\ar@{} (18,0)*+!U{}*\txt{\LB}="Al+1"
\ar@{} (26,0)*+!U{}*\txt{\LB}="Ar-2"
\ar@{} (30,6)*+!U{}*\txt{\LB}="Ar-1"
\ar@{} (30,-6)*+!U{}*\txt{\LB}="Ar"
\ar@{-} "A1";"A2"
\ar@{-} "A2";(8,0)
\ar@{.} (8,0);(10,0)
\ar@{-} (10,0);"Al"
\ar@{-} "Al";(18,0)
\ar@{-} (18,0);(21,0)
\ar@{.} (21,0);(23,0)
\ar@{-} (23,0);(26,0)
\ar@{-} (26,0);(30,6)
\ar@{-} (26,0);(30,-6)
\end{xy}
\end{cases}$}\\\\\\\\\\\\\\\\\\\\\\\\\\\hline
\multirow{4}{*}{DII} &
\multirow{4}{*}{
\begin{xy}
\ar@{} (0,0)*+!U{\alpha_{1}}*\cir<3pt>{}="A1"
\ar@{} (5,0)*+!U{}*\txt{\LB}="A2"
\ar@{} (13,0)*+!U{}*\txt{\LB}="Ar-2"
\ar@{} (17,6)*+!U{}*\txt{\LB}="Ar-1"
\ar@{} (17,-6)*+!U{}*\txt{\LB}="Ar"
\ar@{-} "A1";(5,0)
\ar@{-} (5,0);(8,0)
\ar@{.} (8,0);(10,0)
\ar@{-} (10,0);(13,0)
\ar@{-} (13,0);(17,6)
\ar@{-} (13,0);(17,-6)
\end{xy}
}\\\\\\\\\hline
\multirow{9}{*}{DIII} &
\multirow{9}{*}{
$\begin{cases}
\begin{xy}
\ar@{}  (0,0)*+!U{\alpha_{1}}*\txt{\LB}="A1"
\ar@{} (5,0)*+!U{\alpha_{2}}*\cir<3pt>{}="A2"
\ar@{}  (10,0)*+!U{}*\txt{\LB}="A3"
\ar@{} (18,0)*+!U{\alpha_{r-2}}*\cir<3pt>{}="Ar-2"
\ar@{}  (22,6)*+!L{\alpha_{r-1}}*\txt{\LB}="Ar-1"
\ar@{} (22,-6)*+!L{\alpha_{r}}*\cir<3pt>{}="Ar"
\ar@{-} (0,0);"A2"
\ar@{-} "A2";(10,0)
\ar@{-} (10,0);(13,0)
\ar@{.} (13,0);(15,0)
\ar@{-} (15,0);"Ar-2"
\ar@{-} "Ar-2";(22,6)
\ar@{-} "Ar-2";"Ar"
\end{xy}\\\\
\begin{xy}
\ar@{}  (0,0)*+!U{\alpha_{1}}*\txt{\LB}="A1"
\ar@{} (5,0)*+!U{\alpha_{2}}*\cir<3pt>{}="A2"
\ar@{}  (10,0)*+!U{}*\txt{\LB}="A3"
\ar@{} (18,0)*+!U{\alpha_{r-2}}*\cir<3pt>{}="Ar-2"
\ar@{}  (22,6)*+!L{\alpha_{r-1}}*\cir<3pt>{}="Ar-1"
\ar@{} (22,-6)*+!L{\alpha_{r}}*\cir<3pt>{}="Ar"
\ar@{-} (0,0);"A2"
\ar@{-} "A2";(10,0)
\ar@{-} (10,0);(13,0)
\ar@{.} (13,0);(15,0)
\ar@{-} (15,0);"Ar-2"
\ar@{-} "Ar-2";"Ar-1"
\ar@{-} "Ar-2";"Ar"
\ar@/_/@{<->} "Ar";"Ar-1"
\end{xy}
\end{cases}$}\\\\\\\\\\\\\\\\\\\hline
\end{tabularx}
\end{multicols}
\end{table}

\begin{table}[!!h]
\tiny
\contcaption{(continued)}
\begin{multicols}{2}
\newcolumntype{C}{>{\centering\arraybackslash}X}
\begin{tabularx}{0.46\textwidth}{cC}
\hline
\hline
Type of & \multirow{2}{*}{Satake diagram} \\
$(R,\theta)$ & \\
\hline
\hline
\multirow{7}{*}{EI$+$EI}&
\multirow{7}{*}{
\begin{xy}
\ar@{-} (0,5)*\cir<3pt>{}="X";(5,5)*\cir<3pt>{}="A"
\ar@{-} "A";(10,5)*\cir<3pt>{}="B"
\ar@{-} "B";(15,5)*\cir<3pt>{}="C"
\ar@{-} "C";(20,5)*\cir<3pt>{}="D"
\ar@{-} "B";(10,10)*\cir<3pt>{}="Y" 
\ar@{-} "A";(10,5)*\cir<3pt>{}="B"
\ar@{-} "B";(15,5)*\cir<3pt>{}="C"
\ar@{-} "C";(20,5)*\cir<3pt>{}="D"
\ar@{-} "B";(10,10)*\cir<3pt>{}="Y" 
\ar@{-} (0,-7)*\cir<3pt>{}="XX";(5,-7)*\cir<3pt>{}="AA"
\ar@{-} "AA";(10,-7)*\cir<3pt>{}="BB"
\ar@{-} "BB";(15,-7)*\cir<3pt>{}="CC"
\ar@{-} "CC";(20,-7)*\cir<3pt>{}="DD"
\ar@{-} "BB";(10,-2)*\cir<3pt>{}="YY" 
\ar@{-} "AA";(10,-7)*\cir<3pt>{}="BB"
\ar@{-} "BB";(15,-7)*\cir<3pt>{}="CC"
\ar@{-} "CC";(20,-7)*\cir<3pt>{}="DD"
\ar@{-} "BB";(10,-2
)*\cir<3pt>{}="YY" 
\ar@/_/@{<->} "A";"AA"
\ar@/_/@{<->} "B";"BB"
\ar@/_/@{<->} "C";"CC"
\ar@/_/@{<->} "D";"DD"
\ar@/_/@{<->} "X";"XX"
\ar@/^/@{<->} "Y";"YY"
\end{xy}
}\\\\\\\\\\\\\\\hline
\multirow{7}{*}{EV$+$EV}&
\multirow{7}{*}{
\begin{xy}
\ar@{-} (0,5)*\cir<3pt>{}="X";(5,5)*\cir<3pt>{}="A"
\ar@{-} "A";(10,5)*\cir<3pt>{}="B"
\ar@{-} "B";(15,5)*\cir<3pt>{}="C"
\ar@{-} "C";(20,5)*\cir<3pt>{}="D"
\ar@{-} "D";(25,5)*\cir<3pt>{}="E"
\ar@{-} "C";(15,10)*\cir<3pt>{}="Y" 
\ar@{-} (0,-7)*\cir<3pt>{}="XX";(5,-7)*\cir<3pt>{}="AA"
\ar@{-} "AA";(10,-7)*\cir<3pt>{}="BB"
\ar@{-} "BB";(15,-7)*\cir<3pt>{}="CC"
\ar@{-} "CC";(20,-7)*\cir<3pt>{}="DD"
\ar@{-} "DD";(25,-7)*\cir<3pt>{}="EE"
\ar@{-} "CC";(15,-2)*\cir<3pt>{}="YY"
\ar@/_/@{<->} "A";"AA"
\ar@/_/@{<->} "B";"BB"
\ar@/_/@{<->} "C";"CC"
\ar@/_/@{<->} "D";"DD"
\ar@/_/@{<->} "E";"EE"
\ar@/_/@{<->} "X";"XX"
\ar@/^/@{<->} "Y";"YY"
\end{xy}
}\\\\\\\\\\\\\\\hline
\multirow{7}{*}{EVIII$+$EVIII}&
\multirow{7}{*}{
\begin{xy}
\ar@{-} (0,5)*\cir<3pt>{}="X";(5,5)*\cir<3pt>{}="A"
\ar@{-} "A";(10,5)*\cir<3pt>{}="B"
\ar@{-} "B";(15,5)*\cir<3pt>{}="C"
\ar@{-} "C";(20,5)*\cir<3pt>{}="D"
\ar@{-} "D";(25,5)*\cir<3pt>{}="E"
\ar@{-} "E";(30,5)*\cir<3pt>{}="F"
\ar@{-} "D";(20,10)*\cir<3pt>{}="Y" 
\ar@{-} (0,-7)*\cir<3pt>{}="XX";(5,-7)*\cir<3pt>{}="AA"
\ar@{-} "AA";(10,-7)*\cir<3pt>{}="BB"
\ar@{-} "BB";(15,-7)*\cir<3pt>{}="CC"
\ar@{-} "CC";(20,-7)*\cir<3pt>{}="DD"
\ar@{-} "DD";(25,-7)*\cir<3pt>{}="EE"
\ar@{-} "EE";(30,-7)*\cir<3pt>{}="FF"
\ar@{-} "DD";(20,-2)*\cir<3pt>{}="YY" 
\ar@/_/@{<->} "A";"AA"
\ar@/_/@{<->} "B";"BB"
\ar@/_/@{<->} "C";"CC"
\ar@/_/@{<->} "D";"DD"
\ar@/_/@{<->} "E";"EE"
\ar@/_/@{<->} "F";"FF"
\ar@/_/@{<->} "X";"XX"
\ar@/^/@{<->} "Y";"YY"
\end{xy}
}\\\\\\\\\\\\\\\hline
\multirow{6}{*}{FI$+$FI}&
\multirow{6}{*}{
\begin{xy}
\ar@{-} (0,5)*\cir<3pt>{}="A";(5,5)*\cir<3pt>{}="B"
\ar@{=>} "B";(12,5)*\cir<3pt>{}="C"
\ar@{-}  "C";(17,5)*\cir<3pt>{}="D"
\ar@{-} (0,-5)*\cir<3pt>{}="AA";(5,-5)*\cir<3pt>{}="BB"
\ar@{=>} "BB";(12,-5)*\cir<3pt>{}="CC"
\ar@{-}  "CC";(17,-5)*\cir<3pt>{}="DD"
\ar@/_/@{<->} "A";"AA"
\ar@/_/@{<->} "B";"BB"
\ar@/_/@{<->} "C";"CC"
\ar@/_/@{<->} "D";"DD"
\end{xy}
}\\\\\\\\\\\\\hline
\multirow{6}{*}{G$+$G}&
\multirow{6}{*}{
\begin{xy}
\ar@3{->} (0,5)*\cir<3pt>{}="A";(7,5)*\cir<3pt>{}="B"
\ar@3{->} (0,-5)*\cir<3pt>{}="AA";(7,-5)*\cir<3pt>{}="BB"
\ar@/_/@{<->} "A";"AA"
\ar@/_/@{<->} "B";"BB"
\end{xy}
}\\\\\\\\\\\\\hline
\multirow{5}{*}{EI} &
\multirow{5}{*}{
\begin{xy}
\ar@{-} (0,1)*\cir<3pt>{};(5,1)*\cir<3pt>{}="A"
\ar@{-} "A";(10,1)*\cir<3pt>{}="B"
\ar@{-} "B";(15,1)*\cir<3pt>{}="C"
\ar@{-} "C";(20,1)*\cir<3pt>{}="D"
\ar@{-} "B";(10,6)*\cir<3pt>{} 
\end{xy}
}\\\\\\\\\\\hline
\multirow{5}{*}{EII} &
\multirow{5}{*}{
\begin{xy}
\ar@{-} (0,1)*+!D{\alpha_{6}}*\cir<3pt>{}="A";(5,1)*+!D{\alpha_{5}}*\cir<3pt>{}="B"
\ar@{-} "B";(10,1)*+!DL{\!\!\!\alpha_{4}}*\cir<3pt>{}="C"
\ar@{-} "C";(15,1)*+!D{\alpha_{3}}*\cir<3pt>{}="D"
\ar@{-} "D";(20,1)*+!D{\alpha_{1}}*\cir<3pt>{}="E"
\ar@{-} "C";(10,6)*+!D{\alpha_{2}}*\cir<3pt>{}
\ar@/_4mm/@{<->} "A";"E"
\ar@/_/@{<->} "B";"D"
\end{xy}
}\\\\\\\\\\\hline
\multirow{5}{*}{EIII} &
\multirow{5}{*}{
\begin{xy}
\ar@{-} (0,1)*+!D{\alpha_{6}}*\cir<3pt>{}="A";(5,1)
\ar@{}  (5,0.8)*+!D{\alpha_{5}}*\txt{\LB}
\ar@{}  (10,0.8)*+!DL{\!\!\!\alpha_{4}}*\txt{\LB}
\ar@{}  (15,0.8)*+!D{\alpha_{3}}*\txt{\LB}
\ar@{-} (5,1);(10,1)
\ar@{-} (10,1);(15,1)
\ar@{-} (15,1);(20,1)*+!D{\alpha_{1}}*\cir<3pt>{}="E"
\ar@{-} (10,1);(10,6)*+!D{\alpha_{2}}*\cir<3pt>{}
\ar@/_4mm/@{<->} "A";"E"
\end{xy}
}\\\\\\\\\\\hline
\end{tabularx}

\begin{tabularx}{0.46\textwidth}{cC}
\hline
\hline
Type of & \multirow{2}{*}{Satake diagram} \\
$(R,\theta)$ & \\
\hline
\hline
\multirow{5}{*}{EIV} &
\multirow{5}{*}{
\begin{xy}
\ar@{-} (0,1)*+!U{\alpha_{6}}*\cir<3pt>{}="A";(5,1)
\ar@{}  (5,0.8)*\txt{\LB}
\ar@{}  (10,0.8)*\txt{\LB}
\ar@{}  (15,0.8)*\txt{\LB}
\ar@{-} (5,1);(10,1)
\ar@{-} (10,1);(15,1)
\ar@{-} (15,1);(20,1)*+!U{\alpha_{1}}*\cir<3pt>{}="E"
\ar@{-} (10,1);(10,6)
\ar@{}  (10,5.8)*\txt{\LB}
\end{xy}
}\\\\\\\\\\\hline
\multirow{5}{*}{EV} &
\multirow{5}{*}{
\begin{xy}
\ar@{-} (0,1)*\cir<3pt>{};(5,1)*\cir<3pt>{}="A"
\ar@{-} "A";(10,1)*\cir<3pt>{}="B"
\ar@{-} "B";(15,1)*\cir<3pt>{}="C"
\ar@{-} "C";(20,1)*\cir<3pt>{}="D"
\ar@{-} "D";(25,1)*\cir<3pt>{}="E"
\ar@{-} "C";(15,6)*\cir<3pt>{} 
\end{xy}
}\\\\\\\\\\\hline
\multirow{6}{*}{EVI} &
\multirow{6}{*}{
\begin{xy}
\ar@{} (0,0.8)*+!U{\alpha_{7}}*\txt{\LB}
\ar@{-} (0,1);(5,1)*+!U{\alpha_{6}}*\cir<3pt>{}="A"
\ar@{-} "A";(10,1)
\ar@{}  (10,1)*+!U{\alpha_{5}}*\txt{\LB}
\ar@{-} (10,1);(15,1)*+!U{\alpha_{4}}*\cir<3pt>{}="C"
\ar@{-} "C";(20,1)*+!U{\alpha_{3}}*\cir<3pt>{}="D"
\ar@{-} "D";(25,1)*+!U{\alpha_{1}}*\cir<3pt>{}="E"
\ar@{-} "C";(15,6)
\ar@{} (15,5.8)*+!D{\alpha_{2}}*\txt{\LB}
\end{xy}
}\\\\\\\\\\\\\hline
\multirow{6}{*}{EVII} &
\multirow{6}{*}{
\begin{xy}
\ar@{-} (0,1)*+!U{\alpha_{7}}*\cir<3pt>{};(5,1)*+!U{\alpha_{6}}*\cir<3pt>{}="A"
\ar@{-} "A";(10,1)
\ar@{-} (10,1);(15,1)
\ar@{-} (15,1);(20,1)
\ar@{-} (25,1);(25,1)
\ar@{-} (15,1);(15,6)
\ar@{-} (20,1);(25,1)*+!U{\alpha_{1}}*\cir<3pt>{}
\ar@{}  (10,0.8)*+!U{\alpha_{5}}*\txt{\LB}
\ar@{}  (15,0.8)*+!U{\alpha_{4}}*\txt{\LB}
\ar@{}  (20,0.8)*+!U{\alpha_{3}}*\txt{\LB}
\ar@{}  (15,5.8)*+!D{\alpha_{2}}*\txt{\LB}
\end{xy}
}\\\\\\\\\\\\\hline
\multirow{5}{*}{EVIII} &
\multirow{5}{*}{
\begin{xy}
\ar@{-} (0,1)*\cir<3pt>{};(5,1)*\cir<3pt>{}="A"
\ar@{-} "A";(10,1)*\cir<3pt>{}="B"
\ar@{-} "B";(15,1)*\cir<3pt>{}="C"
\ar@{-} "C";(20,1)*\cir<3pt>{}="D"
\ar@{-} "D";(25,1)*\cir<3pt>{}="E"
\ar@{-} "E";(30,1)*\cir<3pt>{}
\ar@{-} "D";(20,6)*\cir<3pt>{} 
\end{xy}
}\\\\\\\\\\\hline
\multirow{6}{*}{EIX} &
\multirow{6}{*}{
\begin{xy}
\ar@{-} (0,1)*+!U{\alpha_{8}}*\cir<3pt>{};(5,1)*+!U{\alpha_{7}}*\cir<3pt>{}="A"
\ar@{-} "A";(10,1)*+!U{\alpha_{6}}*\cir<3pt>{}="B"
\ar@{-} "B";(15,1)
\ar@{-} (15,1);(20,1)
\ar@{-} (20,1);(25,1)
\ar@{-} (25,1);(30,1)*+!U{\alpha_{1}}*\cir<3pt>{}
\ar@{-} (20,1);(20,6)
\ar@{}  (15,0.8)*+!U{\alpha_{5}}*\txt{\LB}
\ar@{}  (20,0.8)*+!U{\alpha_{4}}*\txt{\LB}
\ar@{}  (25,0.8)*+!U{\alpha_{3}}*\txt{\LB}
\ar@{}  (20,5.8)*+!D{\alpha_{2}}*\txt{\LB}
\end{xy}
}\\\\\\\\\\\\\hline
\multirow{4}{*}{FI} &
\multirow{4}{*}{
\begin{xy}
\ar@{-} (0,1)*\cir<3pt>{};(5,1)*\cir<3pt>{}="A"
\ar@{=>} "A";(12,1)*\cir<3pt>{}="B"
\ar@{-}  "B";(17,1)*\cir<3pt>{}
\end{xy}
}\\\\\\\\\hline
\multirow{4}{*}{FII} &
\multirow{4}{*}{
\begin{xy}
\ar@{-} (0,1);(5,1)
\ar@{=>} (5,1);(10.8,1)
\ar@{-}  (12,1);(17,1)*+!U{\alpha_{4}}*\cir<3pt>{}
\ar@{} (0,0.8)*+!U{\alpha_{1}}*\txt{\LB}
\ar@{} (5,0.8)*+!U{\alpha_{2}}*\txt{\LB}
\ar@{} (12,0.8)*+!U{\alpha_{3}}*\txt{\LB}
\end{xy}
}\\\\\\\\\hline
\multirow{4}{*}{FIII} &
\multirow{4}{*}{
\begin{xy}
\ar@{-} (0,1)*+!U{\alpha_{1}}*\cir<3pt>{};(5,1)
\ar@{=>} (5,1);(10.8,1)
\ar@{-}  (12,1);(17,1)*+!U{\alpha_{4}}*\cir<3pt>{}
\ar@{} (5,0.8)*+!U{\alpha_{2}}*\txt{\LB}
\ar@{} (12,0.8)*+!U{\alpha_{3}}*\txt{\LB}
\end{xy}
}\\\\\\\\\hline
\multirow{3}{*}{G} &
\multirow{3}{*}{
\begin{xy}
\ar@3{->} (0,1)*\cir<3pt>{};(7,1)*\cir<3pt>{}
\end{xy}
}\\\\\\\hline
\end{tabularx}
\end{multicols}
\end{table}


\clearpage


\end{document}